\definecolor{darkred}{rgb}{0.4,0,0}
\definecolor{darkgreen}{rgb}{0,0.5,0}
\definecolor{darkblue}{rgb}{0,0,0.4}
\newcommand{\Sp}{\operatorname{Sp}}
\newcommand{\GSp}{\operatorname{GSp}}
\newcommand{\GL}{\operatorname{GL}}
\newcommand{\PGL}{\operatorname{PGL}}
\newcommand{\SL}{\operatorname{SL}}
\newcommand{\SO}{\operatorname{SO}}
\newcommand{\Hom}{\operatorname{Hom}}
\newcommand{\Int}{\operatorname{Int}}
\newcommand{\Gr}{\operatorname{Gr}}
\renewcommand{\paragraph}{%
	\@startsection {paragraph}{4}
	{\z@} \z@ {-\fontdimen 2\font }\bfseries
}
\def\@cite#1#2{{\m@th\upshape\bfseries%
		[{#1\if@tempswa{\m@th\upshape\mdseries, #2}\fi}]}}
\numberwithin{equation}{section}
\theoremstyle{plain}
\newtheorem{thm}{Theorem}[section]
\newtheorem{cor}[thm]{Corollary}
\newtheorem{prop}[thm]{Proposition}
\newtheorem{lem}[thm]{Lemma}
\theoremstyle{definition}
\newtheorem{definition}[thm]{Definition}
\theoremstyle{remark}
\newtheorem{remark}[thm]{Remark}
\newcommand{\rnc}{\renewcommand}
\newcommand{\lip}{\langle}
\newcommand{\rip}{\rangle}
\newcommand{\ann}[1]{}
\newcommand{\eb}{\emph}
\newcommand\bfF{\mathbf{F}}
\newcommand\bfG{\mathbf{G}}
\newcommand\bfH{\mathbf{H}}
\newcommand\bfM{\mathbf{M}}
\newcommand\bfN{\mathbf{N}}
\newcommand\bfP{\mathbf{P}}
\newcommand\bfS{\mathbf{S}}
\newcommand\bfT{\mathbf{T}}
\newcommand\bfU{\mathbf{U}}
\newcommand\bfW{\mathbf{W}}
\newcommand\bfX{\mathbf{X}}
\newcommand\bfZ{\mathbf{Z}}
\newcommand\bfp{\mathbf{p}}
\newcommand\bfq{\mathbf{q}}
\newcommand\bfw{\mathbf{w}}
\newcommand\bfx{\mathbf{x}}
\newcommand\bbC{\mathbb{C}}
\newcommand\bbF{\mathbb{F}}
\newcommand\bbG{\mathbb{G}}
\newcommand\bbK{\mathbb{K}}
\newcommand\bbL{\mathbb{L}}
\newcommand\bbN{\mathbb{N}}
\newcommand\bbP{\mathbb{P}}
\newcommand\bbQ{\mathbb{Q}}
\newcommand\bbR{\mathbb{R}}
\newcommand\bbZ{\mathbb{Z}}
\newcommand\cF{\mathcal{F}}
\newcommand\cG{\mathcal{G}}
\newcommand\cH{\mathcal{H}}
\newcommand\cI{\mathcal{I}}
\newcommand{\cJ}{\mathcal{J}} 
\newcommand\cL{\mathcal{L}}
\newcommand\cM{\mathcal{M}}
\newcommand\cP{\mathcal{P}}
\newcommand\cT{\mathcal{T}}
\newcommand\cU{\mathcal{U}}
\newcommand\cW{\mathcal{W}}
\newcommand\cX{\mathcal{X}}
\newcommand\ff{\mathfrak{f}}
\newcommand\fg{\mathfrak{g}}
\newcommand\fh{\mathfrak{h}}
\newcommand{\GS}{\mathfrak{S}}
\DeclarePairedDelimiter\abs{\lvert}{\rvert}%
\DeclarePairedDelimiter\norm{\lVert}{\rVert}%
\let\oldabs\abs
\def\abs{\@ifstar{\oldabs}{\oldabs*}}
\let\oldnorm\norm
\def\norm{\@ifstar{\oldnorm}{\oldnorm*}}
\newcommand\Weyl{\cW}
\newcommand\nba{\lip\beta,\alpha\rip}
\newcommand\nla{\lip\lambda,\alpha\rip}
\newcommand\nlia{\lip\lambda_i,\alpha\rip}
\newcommand\typeA{A}
\newcommand\typeC{C}
\newcommand{\charS}{X^{\ast}(\bfS)}
\newcommand{\cocharS}{X_{\ast}(\bfS)}
\newcommand{\Gm}{\mathbb{G}_{\mathrm{m}}}
\newcommand{\gM}{g_{{}_\mathcal{M}}}
\newcommand{\Qbar}{\overline{\bbQ}}
\newcommand{\bfHrm}{\bfH_{r,m}}
\newcommand{\bfTrm}{\bfT_{r,m}}
\newcommand{\bfNrm}{\bfN_{r,m}}
\newcommand{\Wrm}{\cW_{r,m}}
\newcommand{\MmlR}{M_{m,l}(\bbR)}
\newcommand{\Wrml}{\cW_{r}(m,l)}
\newcommand{\wrml}{\cW'_{r}(m,l)}
\newcommand{\interval}{B}
\newcommand{\eps}{\varepsilon}
\newcommand{\Lphi}{\cL_\phi}
\newcommand{\LA}{\cL_A}
\newcommand{\LM}{\cL_\cM}
\newcommand{\Aext}{A^{\mathrm{ext}}}
\newcommand{\gA}{u_A}
\rnc{\Re}{\operatorname{Re}}
\rnc{\Im}{\operatorname{Im}}
\DeclareMathOperator{\rank}{rank}
\DeclareMathOperator{\diag}{diag}
\DeclareMathOperator{\End}{End}
\DeclareMathOperator{\Lie}{Lie}
\DeclareMathOperator{\Tran}{Tran}
\DeclareMathOperator{\Gal}{Gal}
\DeclareMathOperator{\Ad}{Ad}
\DeclareMathOperator{\Res}{Res}
\DeclareMathOperator{\pf}{Pff}
\DeclareMathOperator{\Sym}{Sym}
\DeclareMathOperator{\covol}{Covol}
\title[Equidistribution of expanding degenerate manifolds]{Equidistribution of expanding degenerate manifolds in the space of lattices}
\author[Shah]{Nimish~A.~Shah}
\address{The Ohio State University, Columbus, OH 43210}
\email{shah@math.osu.edu}
\author[Yang]{Pengyu~Yang}
\address{Morningside Center of Mathematics, Chinese Academy of Sciences, Beijing, China 100871}
\email{yangpengyu@amss.ac.cn}
\thanks{N.A.Shah was supported by NSF grant DMS-1700394.}
\thanks{P. Yang is supported by the National Key R\&D Program of China 2022YFA1007500 and NSFC grant 22AAA00245.}
\begin{document}
	
	\begin{abstract}
		For the space of unimodular lattices in a Euclidean space, we give necessary and sufficient conditions for equidistribution of expanding translates of any real-analytic submanifold under a diagonal flow. This work extends the earlier result of Shah in the case of non-degenerate submanifolds.
		
		We apply the above dynamical result to show that if the affine span of a real-analytic submanifold in a Euclidean space satisfies certain Diophantine and arithmetic conditions, then almost every point on the manifold is not Dirichlet-improvable.
	\end{abstract}
	
	\subjclass[2010]{Primary 37A17, 11J83; Secondary 22E46, 14L24, 11J13}
	\keywords{Homogeneous dynamics, Dirichlet-improvable vectors, equidistribution, Diophantine approximation}
	\maketitle

	\section{Introduction}
	
	\subsection{Background}
	Let $G$ be a Lie group, $\Gamma$ a discrete subgroup of $G$, and $Y$ a smooth submanifold of $G/\Gamma$. In 2002, Margulis \cite{Mar02} asked the following question in a chapter of the book \emph{A panorama of number theory or the view from {B}aker's garden}:
	
	(Q) What is the distribution of $gY$ in $G/\Gamma$ when $g$ tends to infinity in $G$?\\
	He further divided this question into two subquestions:
	
	(Q1) What is the behavior of $gY$ ‘near infinity’ in $G/\Gamma$?
	
	(Q2) What is the distribution of $gY$ in the ‘bounded part’ of $G/\Gamma$?
	
	These questions are connected to various problems in Diophantine approximation and number theory, for instance, the lattice counting problems \cite{DRS93,EM93,EMS96}, the Sprind\v{z}uk conjecture on very well approximable (VWA) vectors \cite{Spr80, KM98} and Dirichlet-improvable (DI) matrices \cite{DS6970, DS70, KW08, Sha09Invention}.
	
	In this paper, we are particularly interested in the distribution of translates $g_tY$, where $\{g_t\}_{t\in\bbR}$ is a diagonalizable flow in $G$, and $Y$ is a real-analytic submanifold which is expanded by $g_1$. We assume that $\Gamma$ is a lattice in $G$, i.e., $G/\Gamma$ has finite volume. We shall refer to (Q1) and (Q2) as \emph{non-escape of mass} and \emph{equidistribution}, respectively.
	
	If $Y$ is an open subset of the full expanding horosphere, using the thickening method originating in Margulis' thesis and the mixing property of the diagonal flows in homogeneous spaces, one can show that the translates $g_tY$ get equidistributed in $G/\Gamma$ as $t\to\infty$; see e.g.\ \cite{EM93}.
	
	When $Y$ is a real-analytic curve, Kleinbock and Margulis studied (Q1) for $G=\SL_n(\bbR)$ in the seminal work \cite{KM98}, and later the first named author studied (Q2) for $G=\SL_n(\bbR)$ in \cite{Sha09Invention, Sha10} and $G=\SO(n,1)$ in \cite{Sha09Duke1}. Later Aka, Breuillard, Rosenzweig, and de Saxc\'{e} \cite{ABRS18} generalized the results in \cite{KM98} to Grassmannians; in particular, they discovered that the obstruction to non-divergence are certain Schubert subvarieties called constraining pencils. Based on these works, in \cite{Yan20Invent}, the second named author studied the case where $G$ is a semisimple algebraic group and defined unstable Schubert varieties as a natural generalization of the constraining pencils, which are potential obstructions to the non-escape of mass. We say that $Y$ is \emph{non-degenerate\/} if $Y$ is not contained in any unstable Schubert variety. In \cite{Yan20Invent}, it was shown that translates of a non-degenerate curve $Y$ has no escape of mass and a necessary and sufficient condition for equidistribution of the translates $g_tY$ is given (again assuming non-degeneracy).
	
	Now, the case where $Y$ is degenerate remains to be studied. The first nontrivial case is $G=\SL_3(\bbR)$. In a preprint \cite{CY19}, S. Chow and L. Yang obtained an effective equidistribution result where $Y$ is a Diophantine line, and the translating elements are in a specific cone of the full diagonal subgroup.  Later, in a joint work~\cite{KSSY21} of authors with D.~Kleinbock and N.~de Saxc\'e, the flow $\diag(e^{2t}, e^{-t}, e^{-t})$ was studied, and necessary and sufficient conditions for both the non-escape-of-mass and equidistribution were provided. For $G=\SL_4(\bbR)$, R.~Shi and B.~Weiss \cite{SW17} gave an example of a line defined over a real quadratic number field, such that the translates are stuck in a fixed compact set; in particular, they do not get equidistributed.
	
	In this paper, we consider the case where $G=\SL_n(\bbR)$ and $g_t=\diag(e^{(n-1)t}, e^{-t}, \dots,e^{-t})$, and we study translates of a degenerate real-analytic submanifold $Y$ of the expanding horosphere; here nondegeneracy means $Y$ is contained in a proper affine subspace of the horosphere, which is isomorphic to $\bbR^{n-1}$ in this case. We provide necessary and sufficient Diophantine or arithmetic conditions on the affine span of $Y$ for both non-escape-of-mass and equidistribution.
	
	\subsection{Statement of the main results}\label{subsect:statement of the main results}
	Let {$G=\SL_{n}(\bbR)$, $\Gamma=\SL_{n}(\bbZ)$} and $X=G/\Gamma$. Let $\mu_{X}$ denote the unique $G$-invariant probability measure on $X$. Let 
 \[
 g_t=\diag(e^{(n-1)t}, e^{-t}, \dots,e^{-t}),
 \]so that the expanding horospherical subgroup {of $G$}  associated to $g_1$ is 
	\begin{equation} \label{eq:U+}
	U^+=\{g\in G\mid g_{-t}gg_t\to e, t\to{+}\infty \}={
		{\left\{\begin{pmatrix}
			1 & * & \cdots & * \\
			& 1 & & \\
			&  & \ddots & \\
			& &  & 1
			\end{pmatrix}\right\}}}.
	\end{equation}
	Let $x_0=e\Gamma\in X$.
	Let $\interval$ be an open ball in a finite-dimensional Euclidean space. Let $\phi\colon \interval\to U^+\cong\bbR^{n-1}$ be a nonconstant real-analytic map. Let $\lambda$ be a probability measure that is absolutely continuous with respect to the Lebesgue measure on the Euclidean space with support contained in $\interval$. Let $\lambda_\phi$ denote the push-forward of $\lambda$ under the map $s\mapsto\phi(s)x_0$ from $\interval$ to 
	$X$. Let $g_t\lambda_\phi$ denote the push-forward of $\lambda_\phi$ by $g_t$, i.e. $g_t\lambda_\phi(E)=\lambda_\phi(g_t^{-1}E)$ for any measurable $E\subset X$. We are interested in the limiting distribution of the family $\{g_t\lambda_\phi\}_{t\geq0}$ of translated measures.
	
	We say that a family $\{\lambda_i\}_{i\in\cF}$ of probability measures on $X$ have \eb{no escape of mass\/} if for every $\eps>0$ there exists a compact subset $K$ of $X$ such that $\lambda_i(K)>1-\eps$ for all $i\in\cF$. Furthermore, for $\cF=\bbN$ or $\bbR_{\geq0}$, we say that a family $\{\lambda_i\}_{i\in\cF}$ of probability measures on $X$ get \eb{equidistributed} in $X$ if
	\[
	\int f \,\mathrm{d}\lambda_i\stackrel{i\to\infty}{\longrightarrow}\int f\,\mathrm{d}\mu_{X},\;\forall f\in C_c(X);
	\]
	that is, $\lambda_i$ converges to $\mu_{X}$ with respect to the weak-$^\ast$ topology as $i\to\infty$.
	
	Our main results give criteria for the non-escape of mass and equidistribution of the translates $\{g_t\lambda_\phi\}_{t\geq0}$. It turns out that these phenomena depend only on the affine span of $\phi(\interval)$.
	
	Let us first parametrize the affine span $\Lphi$ of $\phi(B)$. Since $\phi$ is nonconstant, the dimension of $\Lphi$ is $d-1$ for some $2\leq d\leq n$. 
 
 Suppose $d<n$, then by permuting the coordinates, which commutes with the $g_t$-action, we may assume that $\Lphi$ is of the form
	\[
	\Lphi=\{ (\bfx,\widetilde{\bfx}A_\phi) \mid \bfx\in\bbR^{d-1} \},
	\]
	where $A_\phi\in M_{d,n-d}(\bbR)$. Here $\widetilde{\bfx}=(1,\bfx)$ for any row vector $\bfx\in\bbR^{d-1}$. We wish to phrase our criteria in terms of certain Diophantine and arithmetic properties of $A_\phi$ or $\Lphi$. Therefore, we introduce the following definitions.
	
	Let $m,l$ be positive integers, and let $\norm{\cdot}$ denote the sup-norm. Given a positive real number $r$, 
	let $\Wrml$ denote the set of matrices $A\in\MmlR$ for which there exists $C>0$ such that the inequality
	\begin{equation}\label{eq:Wrml}
	\norm{A\bfq+\bfp}\leq C\norm{\bfq}^{-r}.
	\end{equation} 
	has infinitely many solutions $(\bfp,\bfq)\in\bbZ^m\times\bbZ^l$.
	
	Similarly, let $\wrml$ denote the set of matrices $A\in\MmlR$ for which \eqref{eq:Wrml} has a nonzero solution $(\bfp,\bfq)\in\bbZ^m\times\bbZ^l$ for every $C>0$.
	
	We will prove the following criterion for the non-escape of mass.
	
	\begin{thm}\label{thm:main_nondivergence}
		The translates $\{g_t\lambda_\phi\}_{t\geq0}$ have no escape of mass if and only if one of the following holds:
  \begin{enumerate}
      \item $d=n$.
      \item $d<n$ and $A_\phi\notin\cW'_{n-1}(d,n-d)$.
  \end{enumerate}
	\end{thm}

It follows from the main theorem of \cite[page~353]{BD86} that the Hausdorff dimension of  $\cW'_{n-1}(d,n-d)$ is $d(n-d)-d+1$, which is strictly smaller than $d(n-d)$ since $d\geq 2$.
 
	Before stating our equidistribution criteria, we will introduce some notations. Let $N=\begin{pmatrix}
	n-2 \\ 2
	\end{pmatrix}.$ Given any $A=\begin{pmatrix}
	a_1 & a_2 & \cdots & a_{n-2} \\
	b_1 & b_2 & \cdots & b_{n-2} \\
	\end{pmatrix}\in M_{2,n-2}(\bbR)$, we define an associated matrix $\Aext\in M_{2n-3,N}(\bbR)$ in the following way. In the block matrix form, write \begin{equation}\label{eq:Aext}
	\Aext=\begin{pmatrix}
	X \\ Y \\ Z
	\end{pmatrix},\end{equation}
	where the matrices 
	$X=(x_{k,ij})_{\scriptscriptstyle{\substack{1\leq k\leq n-2 \\ 1\leq i<j\leq n-2}}}\in M_{n-2,N}(\bbR)$, 
	$Y=(y_{k,ij})_{\scriptscriptstyle{\substack{1\leq k\leq n-2 \\ 1\leq i<j\leq n-2}}}\in M_{n-2,N}(\bbR)$, 
	$Z=(z_{12},\dots, z_{ij},\dots)_{\scriptstyle{{1\leq i<j\leq n-2}}}\in M_{1,N}(\bbR)$ are given by
	\[
	x_{k,ij}=\begin{cases}
	a_j & k=i \\
	-a_i & k=j \\
	0 & k\neq i,j
	\end{cases},\;
	y_{k,ij}=\begin{cases}
	b_j & k=i \\
	-b_i & k=j \\
	0 & k\neq i,j
	\end{cases},\;
	z_{ij}=a_jb_i-a_ib_j.
	\]
	We will prove the following criterion for equidistribution.
	\begin{thm}\label{thm:main_equidistribution}
		The translates $\{g_t\lambda_\phi\}_{t\geq0}$ get equidistributed in $X$ if and only if \eb{none} of the following occurs:
		\begin{enumerate}
			\item \label{itm:mainthm_case1}$d<n$ and $A_\phi\in\cW_{n-1}(d,n-d)$.
			\item \label{itm:mainthm_case2} There exist integers $r\geq d$, $m\geq 2$ with $rm=n$, and a number field $\bbK\subset\bbR$ with $[\bbK:\bbQ]=m$, such that $\Lphi$ is contained in some $(r-1)$-dimensional affine subspace of $\bbR^{n-1}$ which is defined over $\bbK$.
			\item \label{itm:mainthm_case3}$n\geq 4$ is even, $d=2$,
			and $\Aext_\phi\in\cW_{\frac{n-2}{2}}(2n-3,N)$.
		\end{enumerate}
	\end{thm}

    When $n$ is a prime number, the (\ref{itm:mainthm_case2}) and (\ref{itm:mainthm_case3}) in the above theorem do not occur. Hence, we have the following:
    \begin{cor}
        Suppose that $n$ is a prime number. Then exactly one of the following holds:
        \begin{enumerate}
            \item The translates $\{g_t\lambda_\phi\}_{t\geq0}$ get equidistributed in $X$.
            \item $d<n$ and $A_\phi\in\cW_{n-1}(d,n-d)$.
        \end{enumerate}
    \end{cor}

    Compared to previous works \cite{Sha09Invention,Yan20Invent,KSSY21}, the main difficulty and novelty of this article is the classification of maximal intermediate subgroups, which arise when we apply Ratner's theorem and linearization technique. We use tools in geometric invariant theory to deal with non-closed group orbits in linear representations. In the remaining cases, where we have closed orbits, we first classify the maximal intermediate subgroup over an algebraically closed field and then use Galois cohomology to classify the possible $\bbQ$-groups that could arise.

    If $\phi$ is only assumed to be a smooth map, then non-divergence and equidistribution no longer depend only on the affine span of the submanifold, and one may need to impose some local conditions on the derivatives of $\phi$. The main difficulty in working with smooth curve case is that the $(C,\alpha)$-good properties needed for the linearization technique do not work for translates of a fixed piece of a smooth curve. To address this issue, one studies \emph{expanding translates of shrinking curves} as done in \cite{Sha09Duke2,SY18,SY24} for non-degenerate smooth curves. However, it is unclear how to adapt this article's geometric invariant theory technique to study the translates of shrinking curves. 
    
	\newcommand{\DI}{\rm{DI}}
	\subsection{An application to Dirichlet-improvable vectors on degenerate manifolds}
	The motivation for our study comes from the Diophantine approximation. {Denote by $\|\cdot\|$ the supremum norm on $\bbR^{n-1}$, where $n\geq 2$ (unless specified otherwise, all the norms will be taken to be the supremum norm).} Let $\cT=\{T_i\}\subset\bbR$ such that $T_i\to\infty$ as $i\to\infty$, and let $0<\delta\leq 1$. {Following} Davenport and Schmidt \cite{DS6970}, let $\DI(\cT,\delta)$ denote the set of vectors $\bfx\in\bbR^{n-1}$ such that for all large $T\in \cT$, the system of inequalities
 {\begin{equation}\label{lf}
		\begin{cases}
		\abs{\bfx\cdot\bfq+p}\leq \delta T^{-(n-1)} \\
		\|\bfq\| \leq  T\end{cases}
		\end{equation}
		has a solution 
		$(p,\bfq)$, where $p\in\bbZ$ and $\bfq\in\bbZ^{n-1}\setminus\{0\}$}.  Similarly, let $\DI'(\cT,\delta)$ denote the set of vectors $\bfx\in\bbR^{n-1}$ such that for all large $T\in\cT$, the system of inequalities 
  {\begin{equation}\label{vect}
		\begin{cases}
		{\|q\bfx+\bfp\|}\leq \delta T^{-1} \\
		\abs{q} \leq  T^{n-1}
		\end{cases}
		\end{equation}
		has a solution 
		$(\bfp,q)$, where $\bfp\in\bbZ^{n-1}$ and $q\in\bbZ\setminus\{0\}$}.
	By Dirichlet's theorem $\DI(\cT,1)=\bbR^{n-1}$ and $\DI'(\cT,1)=\bbR^{n-1}$. By a theorem of Davenport and Schmidt, for any $0<\delta<1$, the sets $\DI(\cT,\delta)$ and $\DI'(\cT,\delta)$ are Lebesgue null, see \cite{DS70,KW08}. After Davenport and Schmidt \cite{DS6970}, let $\interval$ be an open ball in a finite-dimensional Euclidean space and $\phi:\interval\to \bbR^{n-1}$ be a $C^\infty$ map, and we want to know under what condition on $\phi$ one can say that for Lebesgue a.e.\ $s\in \interval$, we have $\phi(s)\not\in \DI(\cT,\delta)\cup \DI'(\cT,\delta)$ for any $\delta<1$; in other words, the Dirichlet's theorem cannot be improved for $\phi(s)$ for a.e.\ $s\in \interval$. In a series of articles \cite{Sha09Invention, Sha10, SY18}, it was shown that the above non-improvability statement holds when the affine span of $\phi(\interval)$ equals $\bbR^{n-1}$. In \cite{KSSY21} it was proved that if $n=3$ and $\phi(\interval)$ is not contained in a rational line in $\bbR^2$, then for Lebesgue almost all $s\in \interval$, $\phi(s)\not\in \DI(\bbN,\delta)\cup\DI(\bbN,\delta)$ for any $\delta<1$. In this article, we generalize the result for all $n\geq 3$ in terms of Diophantine and algebraic properties of the affine span of $\phi(\interval)$.  
 
	As an application of our dynamical result \Cref{thm:main_equidistribution}, we will obtain the following using Dani's correspondence~\cite{Dan85,KW08}.
 
	\begin{thm}\label{thm:main_Dirichlet}
		Let $\phi\colon\interval\to\bbR^{n-1}$ be a real-analytic map. Suppose that \eb{none} of (\ref{itm:mainthm_case1})(\ref{itm:mainthm_case2})(\ref{itm:mainthm_case3}) in \Cref{thm:main_equidistribution} occurs. Let $\cT=\{T_i\}\subset\bbR$ such that $T_i\to\infty$ as $i\to\infty$, then  for Lebesgue-a.e.\ $s\in\interval$, $\phi(s)\notin \DI(\cT,\delta)\cup \DI'(\cT,\delta)$ for any $\delta<1$. In fact, for a.e.\ $s\in B$, for every $\delta>0$, there exists a sequence $i_j\to\infty$ such that \eqref{lf} and \eqref{vect} are not solvable for $\bfx=\phi(s)$ and $T=T_{i_j}$ for all $j$. 
	\end{thm}

\begin{proof}
    Let $\{e_i:1\leq i\leq n\}$ denote the standard basis of $\bbR^n$ and $\mathfrak{w}$ be the matrix such that $\mathfrak{w}e_i=e_{n-i+1}$ for all $i$. Let $L=G\times G$ and $\Lambda=\Gamma\times \Gamma$. Let $\rho(g)=(g,\mathfrak{w}(^t\!g^{-1})\mathfrak{w}^{-1})$ for all $g\in G$. Then $\rho: G\to L$ is an injective homomorphism. Then $\rho(\Gamma)\subset \Lambda$ and $\rho(\Gamma)$ is a lattice in $\rho(G)$. Let $\bar\rho:G/\Gamma\to L/\Lambda$ be the map $\bar\rho(g)=\rho(g)\Lambda$ for all $g\in G$. Then $\bar\rho$ is a continuous injective proper map. Now for any $f\in C_c(L/\Lambda)$, we have $f\circ\bar\rho\in C_c(G/\Gamma)$. So, by \Cref{thm:main_equidistribution}, we get
    \begin{align*}
       \lim_{t\to\infty} \frac{1}{\abs{B}} \int_B f(\rho(a_t)\rho(\phi(s))\Lambda)\,ds
       &=\lim_{t\to\infty} \frac{1}{\abs{B}} \int_{B} f\circ\rho(a_t\phi(s))\Gamma)\,ds\\
       &=\int_{G/\Gamma} f\circ\rho\, d\mu_X \\
       &=\int_{L/\Lambda} f \,d\mu_{\bar\rho(X)},
    \end{align*}
    where $\mu_{\bar\rho(X)}$ is the unique $\rho(G)$ invariant probability measure on the closed set $\bar\rho(G/\Gamma)\cong\rho(G)/\rho(\Gamma)$ of $L/\Lambda$. Having shown this, we argue exactly as in Section~2 of \cite{Sha09Invention} replacing \cite[Theorem~1.3]{Sha09Invention} by the above deduction. 
\end{proof}

 The readers are referred to \cite[Theorem 1.5]{KSSY21} or \cite[Section 1.2]{SY16} for a discussion on the deduction of \Cref{thm:main_Dirichlet} from \Cref{thm:main_equidistribution} using Dani's correspondence \cite{Dan85,KW08}. In fact, for the map $\phi$ as in the statement of \Cref{thm:main_Dirichlet}, one also obtains the conclusion of \cite[Theorem~1.4]{Sha09Invention} which is a generalization in the sense of simultaneous non-Dirichlet improvability along a sequence of natural numbers.

 \begin{remark}
     As we have noted above, if $n$ is a prime number, then (\ref{itm:mainthm_case2}) or (\ref{itm:mainthm_case3}) in \Cref{thm:main_equidistribution} do not occur. This is the situation in \cite{KSSY21}, where $n=3$.
 \end{remark}

 \subsection{Notation}
 In this paper, all algebraic groups are assumed to be affine.
	
	We use boldface capital letters $\bfG, \bfH, \bfF$, etc. to denote algebraic groups (over $\bbQ$ if not specified) and use Roman capital letters $G, H, F$, etc.\ to denote the groups of real points, and $G^0,H^0,F^0$, etc.\ to denote their connected components of identity, respectively.
	
	Let $\bbK$ be a field contained in $\bbR$ and $\bfG$ an algebraic group over $\bbK$. In this paper, a \emph{representation} of $\bfG$ always means a finite-dimensional algebraic representation, i.e., a pair $(\rho, V)$ where $V$ is a vector space over $\bbK$ and $\rho: \bfG\to\GL(V)$ is a morphism of algebraic groups over $\bbK$. We also say that $V$ is a \emph{$\bfG$-module}. Sometimes, we call $\rho$ or $V$ a representation of $\bfG$ for simplicity. We say that $\rho$ is \emph{faithful} if the kernel of $\rho$ is trivial and that $\rho$ is \emph{irreducible} if there is no non-trivial $\bfG$-invariant proper subspace of $V$.
	
	Given a positive integer $r$, let $I_r$ denote the identity $r\times r$ matrix.
	
	Let $\bfX$ be a variety with $\bfG$ acting morphically over $\bbK$, i.e. the $\bfG$-action is given by a morphism $\bfG\times \bfX\to \bfX$. Following \cite[\S I.1.7]{Bor91}, we define the \eb{transporter} between subsets $M$ and $N$ of $\bfX(\overline{\bbK})$:
	\begin{equation} \label{eq:Tran}
	\Tran_\bfG(M,N) = \{ g\in \bfG\mid gM\subset N \}.
	\end{equation}
    If $M$ and $N$ are defined over $\bbK$, then $\Tran_\bfG(M,N)$ is also defined over $\bbK$. Let $\Tran_G(M,N)$ denote the $\bbK$-points of $\Tran_\bfG(M,N)$.
	
	We write $A=O(B)$ or $A\ll B$ or $B\gg A$ if $A\leq CB$ for some constant $C>0$, and $A\asymp B$ if $C^{-1}B\leq A\leq CB$ for some constant $C\geq 1$. For sequences $\{A_i\}_{i\in\bbN}$ and $\{B_i\}_{i\in\bbN}$ we write $A_i=o(B_i)$ if there exists a sequence $c_i\to0$ such that $A_i=c_iB_i$.
	
	For a finite field extension $\bbK/\bbF$ and an algebraic group $\bfG$ over $\bbK$, we write $\Res_{\bbK/\bbF}\bfG$ for the restriction of scalar (or Weil restriction) of $\bfG$ from $\bbK$ to $\bbF$. 
	
	\subsection*{Acknowledgement}
	Our discussions with Dmitry Kleinbock and Nicolas de Saxc\'e during our joint work in the case of $G=\SL_3(\bbR)$ have been greatly valuable for the general case. We would also like to thank Manfred Einsiedler, Alexander Gorodnik, and Jialun Li for their helpful discussions. We would like to thank the referees for providing numerous careful suggestions that have helped us improve the readability of this article.
	
	\section{Instability in invariant theory} \label{sect:instability}
	In this section, we define and study the unstable vectors in representations. Our main tool is Kempf's numerical criterion from geometric invariant theory.

\newcommand{\K}{\bbK}
	
Let $\bfG$ be a connected reductive algebraic group defined over a field $\bbK$ of characteristic $0$. 
 There exists a $\bfG(\bbK)$-invariant norm $\norm{\cdot}$ on the set $X_*(\bfG):=\Hom(\Gm,\bfG)$ of cocharacters of $\bfG$ defined over $\K$. 
	
	Let $\rho: \bfG \to \GL(V)$ be a representation of $\bfG$ defined over $\K$. Let us first recall some notations from \cite{Kem78}. 
	For any $\lambda\in X_\ast(\bfG)$ and $v\in V$, we have the decomposition $v=\sum_{i\in\bbZ}v_i$, where $\lambda(t)v_i=t^iv_i$ for all $i$. Let 
 \begin{equation} \label{eq:mvdelta}
 m(v,\lambda)=\min\{i\in\bbZ:v_i\neq 0\}.
 \end{equation}
	
 We say that a nonzero vector $v$ is \eb{unstable} if the Zariski closure of $\bfG v$ contains the origin. Given any $\lambda\in X_\ast(\bfG)$, we associate a {\em parabolic subgroup\/} $\bfP(\lambda)$ defined over $\bbK$, and its {\em unipotent radical\/} $\bfU^+(\lambda)$ defined over $\bbK$ such that for any field $\bbF\supset\bbR$ we have
\begin{align} \label{eq:def-parabolic}
\bfP(\lambda)(\bbF)&=\{ g\in\bfG(\bbF) \colon \lim_{t\to0}\lambda(t)g\lambda(t)^{-1} \text{ exists} \}\\
\label{eq:def-urad}
\bfU^+(\lambda)(\bbF)&=\{g\in\bfG(\bbF) \colon \lim_{t\to0}\lambda(t)g\lambda(t)^{-1}=e\}.
\end{align}
  
	Let us recall the following important theorem of Kempf \cite[Theorem 4.2]{Kem78}. The form we state here is a special case of Kempf's original theorem.
	
	\begin{thm}[Kempf, 1978]\label{thm:Kempf_main_theorem_4.2}
		Let $0\neq v\in V(\K)$ be an unstable vector. Then
		\begin{enumerate}
			\item The function $\lambda\mapsto m(v, \lambda)/
			\norm{\lambda}$ has a positive maximum value $B_v$.
			\item Let $\Lambda_v$ be the set of indivisible $\K$-cocharacters $\lambda$ such that $m(v, \lambda)=B_v\norm{\lambda}$. Then
			\begin{enumerate}
				\item $\Lambda_v$ is non-empty.
				\item There is a parabolic $\K$-subgroup $\bfP_v$ of $\bfG$ such that $\bfP_v=\bfP(\lambda)$ for any $\lambda\in\Lambda_v$.
				\item \label{itm:2.1c} $\Lambda_v$ is a principal homogeneous space under conjugation by the $\K$-points of the unipotent radical of $\bfP_v$.
				\item Any $\K$-maximal torus of $\bfP_v$ contains the image of a unique member of $\Lambda_v$.
    \item   \label{itm:2.1e} For any $g\in \bfG(\bbK)$, $\Lambda_{gv}=g\Lambda_v g^{-1}$ and $\bfP_{gv}=g\bfP_vg^{-1}$. \cite[Corollary 3.5]{Kem78}.
    \item For any $l\in \bfP_v(\bbK)$, $\Lambda_{lv}=\Lambda_v$ and $\bfP_{lv}=\bfP_v$. ( Consequence of (\ref{itm:2.1c}) and (\ref{itm:2.1e}).)
			\end{enumerate}
		\end{enumerate}
	\end{thm}
	
	If necessary, we will write $\Lambda_v^{\bfG}$ instead of $\Lambda_v$ to allow the ambient group to vary.

 In the remaining part of the section, we will further assume that $\bfG$ is a connected $\bbK$-split semisimple group\footnote{For the purpose of this article, it is sufficient to focus on the case of $\bfG=\SL_n$ and $\bbK=\bbQ$.}. We pick a maximal $\bbK$-split torus $\bfS$ of $\bfG$. Then $\bfS$ is also a maximal torus in $\bfG$ defined over $\bbK$.

 \begin{prop}\label{prop:reduction_to_eigenvector}
		Let $V$ be a representation of $\bfG$ defined over $\K$. Let $v$ be an unstable vector in $V(\K)$. Then there exists an irreducible representation $W$ of $G$ defined over $\K$, a highest weight vector ${w}\in W(\K)$, an element $g_0\in \bfG(\K)$, and constants $C>0$, $\beta>0$ such that for any $g\in G$ one has
		\begin{equation}\label{eq:temp1}
		\lVert gg_0{w} \rVert \leq C \lVert gv \rVert^\beta.
		\end{equation}
	\end{prop}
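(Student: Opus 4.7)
My plan is to apply Kempf's theorem to attach a canonical destabilizing $\mathbb{Q}$-parabolic to $v$, realize it via Borel--Weil as the projective stabilizer of a highest weight line in a proximal $\mathbb{Q}$-representation, and then derive the norm inequality by a tensor/polynomial construction using Kempf's optimality.

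First, I would apply Theorem~\ref{thm:Kempf_main_theorem_4.2} to the unstable vector $v \in V(\bbQ)$ to obtain an indivisible $\bbQ$-cocharacter $\lambda \in \Lambda_v^{\bfG}$ with $m := m(v,\lambda) = B_v\|\lambda\| > 0$, and the canonical $\bbQ$-parabolic $\bfP_v := \bfP(\lambda)$. All $\lambda$-weights appearing in $v$ are then at least $m > 0$, and $\bfP_v$ encodes the most destabilizing direction.

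Second, I would construct the proximal representation. The character of $\bfP_v$ induced by $\lambda$ is dominant by Kempf's optimality; after passing to a sufficiently divisible positive integer multiple $\mu$, the associated $\bbQ$-line bundle $L_\mu$ on $\bfG/\bfP_v$ is very ample. Set $W := H^0(\bfG/\bfP_v, L_\mu)^\ast$; by Borel--Weil this is an irreducible proximal $\bbQ$-representation of $\bfG$ admitting a $\bbQ$-rational highest weight vector $w' \in W(\bbQ)$ whose projective stabilizer is exactly $\bfP_v$. The element $g_0 \in \bfG(\bbQ)$ is then chosen (via a Bruhat/Chevalley argument) to transport the standard reference Borel and cocharacter to match $\bfP_v$ and $\lambda$.

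Third, I would derive the inequality $\|g g_0 w'\| \leq C\|gv\|^\beta$ by a polynomial construction. For the same integer $N$, the symmetric power $\mathrm{Sym}^N V$ contains $W$ as a $\bfG$-summand, and I let $\pi : \mathrm{Sym}^N V \to W$ be the $\bfG$-equivariant projection. The crucial point is that $\pi(v^N) = c\cdot g_0 w'$ for a nonzero scalar $c$. Granting this, $\bfG$-equivariance of $\pi$ gives
\[
g\cdot(c g_0 w') \;=\; \pi(g v^N) \;=\; \pi\bigl((gv)^N\bigr),
\]
and since $\|\pi\|_{\mathrm{op}} < \infty$ and $\|(gv)^N\| = \|gv\|^N$ in the natural cross norm, we obtain
\[
\|g g_0 w'\| \;\leq\; |c|^{-1}\|\pi\|_{\mathrm{op}}\, \|gv\|^N,
\]
which is \eqref{eq:temp1} with $\beta = N$.

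The main obstacle is verifying that $\pi(v^N)$ is nonzero and proportional to a $\bfG(\bbQ)$-translate of a highest weight vector of $W$. This is exactly where the optimality assertion of Kempf's theorem enters: since $\mu$ is proportional to the Kempf character $\chi_\lambda$, the $\mu$-weight component of $v^N$ under any maximal $\bbQ$-split torus of $\bfP_v$ is concentrated in the minimal $\lambda$-weight piece $\mathrm{Sym}^N V_m(\lambda)$; Kempf's principle that $v_m$ is semistable for the Levi $\bfH_{\bfP_v}$ (combined with the extremality of $\mu$ among weights of $\mathrm{Sym}^N V$ occurring in $v^N$) forces this projection to land on a highest-weight line for some Borel $\bfB \subset \bfP_v$. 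Making this alignment rigorous, including the descent over $\bbQ$ and the choice of $g_0$ absorbing the change of Borel, is the technical heart of the proof.
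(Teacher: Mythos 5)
Your proposal starts the same way as the paper's proof: apply Kempf's theorem to get the optimal $\bbQ$-parabolic $\bfP_v$, standardize it via $g_0 \in \bfG(\bbQ)$, take the canonical cocharacter $\delta$, and set $W$ to be the proximal irreducible $\bbQ$-representation with highest weight a multiple of $\delta^\vee$. After that the two arguments diverge fundamentally. The paper establishes \eqref{eq:temp1} by contradiction: assuming $\|g_iv'\|^\beta/\|g_iw'\|\to 0$, it uses the $KS_IH_IU_I$ decomposition to show that $\|\sigma_ih_i\pi(v')\|\to 0$, deduces that $\pi(v')$ is $S_\delta H_I$-unstable, and then perturbs $\delta$ by the secondary Kempf cocharacter $\delta_l$ to produce a cocharacter $\delta_N = N\delta+\delta_l$ with strictly larger $m(\cdot)/\|\cdot\|$, contradicting the maximality of $B_{lv'}$. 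You instead try a direct construction: a $\bfG$-equivariant projection $\pi\colon \Sym^N V\to W$ with $\pi(v^N)=c\,g_0w'$ nonzero, from which the norm inequality falls out by equivariance and the cross-norm.

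The problem is precisely the step you flag as ``the technical heart,'' and I don't think the argument you sketch closes it. What Kempf's optimality gives (this is exactly what the paper proves en route to its contradiction) is that $\pi_\delta(v')$ is semistable for $S_\delta H_I$ acting on the minimal $\delta$-weight space $\tilde V$. From this one can extract, for a suitable $N$, a semi-invariant $p\in\Sym^N\tilde V^*$ of $T$-weight $-\omega_W$ with $p(\pi_\delta(v')^N)\neq 0$, which shows that the \emph{$\omega_W$-weight component} of $v'^N$ in $\Sym^N V$ is nonzero. But that is weaker than what you need: you need the \emph{$W$-isotypic component} of $v'^N$ to be nonzero. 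The $\omega_W$-weight space of $\Sym^N V$ receives contributions not only from the highest-weight lines of the $W$-copies but also from the interior weight spaces of every irreducible summand $W'$ with $\omega_{W'}\succ\omega_W$ (for instance those differing from $\omega_W$ by Levi roots, which leave the $\delta$-pairing unchanged). The nonzero $\omega_W$-weight component of $v'^N$ could a priori lie entirely in those other summands, in which case $\pi(v'^N)=0$ for every $W$-projection and your inequality degenerates to $0\le\|gv\|^N$. You would need a separate argument that the $\omega_W$-weight component of $v'^N$, which is also a highest-weight vector with respect to the unipotent radical of $\bfP(\delta)$ but not automatically with respect to the Levi's Borel, has nonzero projection onto some copy of $W$; semistability of $\pi_\delta(v')$ alone does not do this. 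There is also a smaller bookkeeping issue that needs attention: you need to choose $N$ and the multiple $d$ of $\delta^\vee$ compatibly so that $Nm(v',\delta)=d(\delta,\delta)$ and so that $\Sym^N V$ actually contains $W$ as a summand. So as written there is a genuine gap, and it is exactly the gap that the paper's perturbation argument is designed to avoid.
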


First we will recall some definitions, facts, and notation.
 
\subsection{Root system and highest weights}  \label{subsec:roots}
 
    Let $\cocharS=\Hom(\Gm, \bfS)\cong \bbZ^r$ denote the group of cocharacters of $\bfS$ defined over $\K$. Let $\charS=\Hom(\bfS,\Gm)\cong \bbZ^r$ denote the group of characters of $\bfS$ defined over $\K$. We have a pairing $\langle \cdot ,\cdot\rangle: X^\ast(\bfS)\times X_\ast(\bfS)\to \Hom(\Gm,\Gm)\cong \bbZ$ given by $\langle \chi, \lambda\rangle=\chi\circ\lambda$; that is, $\chi(\lambda(t))=t^{\langle \chi,\lambda\rangle}$ for any $t\in\bbC^\ast$. This pairing is a {\em perfect (dual) pairing}, meaning the canonical inclusions  $X^\ast(\bfS)\hookrightarrow\Hom(X_\ast(\bfS),\bbZ)$ and $X^\ast(\bfS)\hookrightarrow\Hom(X^\ast(\bfS),\bbZ)$, induced by the pairing, are isomorphisms.

    For the following material, the reader may refer to \cite[Appendix G. Root datum]{Conrad-notes}. 

    The group $\cW=N_{\bfG}(\bfS)(\bbK)/Z_{\bfG}(\bfS)(\bbK)=N_{\bfG}(\bfS)(\bbK)/\bfS(\bbK)$ is called the $\bbK$-Weyl group of $\bfG$ relative to $\bfS$. 
    
    Let $\Phi=\Phi(\bfG,\bfS)\subset X^\ast(\bfS)$ denote the set of (non-trivial) $\K$-roots on $\bfS$ for the Adjoint action of $\bfS$ on the Lie algebra of $\bfG$.  For each $\alpha\in \Phi$, $\bfG_\alpha:=[Z_\bfG((\ker\alpha)^\circ),Z_\bfG((\ker\alpha)^\circ)]$ is isomorphic to $\SL_2$ or $\PGL_2$ over $\bbK$, where $^\circ$ denote the irreducible component of the identity. We pick a unique co-root $\check{\alpha}\in X_\ast(\bfS)$ such that $\check{\alpha}(\Gm)$ is in $\bfS\cap \bfG_\alpha$ and $\langle \alpha,\check{\alpha}\rangle=2$. And we pick $n_\alpha\in N_{\bfG}(\bfS)(\bbK)\cap \bfG_\alpha(\bbK)$ that conjugates $\check{\alpha}$ to its inverse. We denote its image in $\cW$ by $w_\alpha$.
    We denote the actions of $w_\alpha$ on $X^\ast (\bfS)$ and $X_\ast(\bfS)$ by $s_{\alpha}$ and $s_{\check\alpha}$, respectively. Then $s_\alpha(\chi)=\chi-\langle \chi, \check\alpha\rangle \alpha$ and $s_{\check \alpha}(\delta)=\delta-\langle \alpha, \delta\rangle \check\alpha$. Also, $\langle s_\alpha \chi,\lambda\rangle=\langle\chi,s_{\check{\alpha}}\lambda\rangle$. Moreover, the Weyl group $\cW$ is generated by $\{w_\alpha:\alpha\in\Phi\}$ (see~\cite[Corollary~G.2.11]{Conrad-notes}).
    
    There exists a positive definite bilinear form $(\cdot,\cdot)$ on $X_\ast(\bfS)$ taking values in $\bbZ$, and it is invariant under the Weyl group $\cW$.  For example, let
    \[
    (\lambda,\lambda'):=\sum_{\alpha\in \Phi(\bfG,\bfS)} \langle \alpha,\lambda\rangle \langle \alpha, \lambda'\rangle.
    \]
    Let $\norm{\lambda}:=\sqrt{(\lambda,\lambda)}$. Being $\cW$-invariant, the norm $\norm{\cdot}$ on $X_\ast(\bfS)$ extends uniquely to a $\bfG(\bbK)$-invariant norm on $X_\ast(\bfG)$. This is so because the canonical injection 
    \begin{equation} \label{eq:modGK}
    \cW\backslash X_\ast(\bfS)\to \bfG(\bbK)\backslash X_\ast(\bfG)
    \end{equation}
    is a surjection \cite[20.19]{Bor69} and \cite[Lemma~2.1]{Kem78}.

Using this bilinear form and the perfect pairing $\langle \cdot,\cdot\rangle$, we can identify $X_\ast(\bfS)$ with $X^\ast(\bfS)$, via the map $\delta\mapsto \hat{\delta}$, defined by
\begin{equation} \label{eq:hat}
\langle\hat\delta,\lambda\rangle=(\delta,\lambda),\,\forall \lambda\in X_\ast(\bfS). 
\end{equation}
This identification gives a positive definite $\cW$-invariant integral bilinear form on $X^\ast(\bfS)$, also denoted by $(\cdot,\cdot)$. Then for all $\alpha\in \Phi$, $(\check{\alpha})^\wedge=2\alpha/(\alpha,\alpha)$ (see \cite[Prop.G.2.5]{Conrad-notes}).

There exists $\lambda_0\in X_\ast(\bfS)$ such that $\langle \alpha,\lambda_0 \rangle\neq 0$ for all $\alpha\in \Phi$. Let $\Phi^+=\{\alpha\in\Phi:\langle \alpha ,\lambda_0\rangle>0\}$. There exists a unique
$\Delta=\{\alpha_1,\alpha_2,\dots, \alpha_r\}\subset \Phi^+$, called the set of {\em simple roots\/} relative to $\lambda_0$, such that every element of $\Phi^+$ can be expressed as a non-negative integral combination of elements of $\Delta$, and $\Delta$ is a basis of $X^\ast(\bfS)\otimes \bbQ$ over $\bbQ$. Then $\{\check{\alpha}_i:i=1,\ldots,r\}$ is a $\bbQ$-basis of $X_\ast(\bfS)\otimes \bbQ$. The elements of its dual
 basis  $\{\mu_1,\dots,\mu_r\}\subset X^\ast(\bfS)$ are called the $\bbK$-fundamental weights; that is, 
    $\langle \mu_i,\check{\alpha_j}\rangle$ is $0$ if $i\neq j$ and is $1$ if $i=j$. 

    Given $\delta\in X_\ast(\bfS)$, we have $\hat\delta=\sum_{i=1}^r k_i\mu_i$, 
    where 
\begin{align}
    k_i&=\langle \hat{\delta},\check\alpha_i\rangle\in\bbZ \nonumber\\
    &=(\delta,\check \alpha_i)=(\check\alpha_i,\delta)=\langle (\check\alpha_i)^\wedge,\delta\rangle
    = \langle \frac{2\alpha_i}{(\alpha_i,\alpha_i)},\delta\rangle=\frac{2}{(\alpha_i,\alpha_i)} \langle \alpha_i,\delta\rangle. \label{eq:dominant}
\end{align}

    We will call the elements of the set
    \begin{equation}
        X_\ast(\bfS)^+=\{\lambda\in X_\ast(\bfS):\langle\alpha,\lambda\rangle\geq 0,\,\forall \alpha\in\Delta\}
    \end{equation}
    {\em dominant cocharacters}, and 
    \begin{equation} \label{eq:pos-Weyl}
    X_\ast(\bfS)=\cW\cdot X_\ast(\bfS)^+.
    \end{equation}
    The elements of the following set are called {\em dominant integral weights}\/:
    \[
    X^\ast(\bfS)^+=\{\chi\in X^\ast(\bfS):\langle \chi,\check{\alpha}\rangle\geq 0,\,\forall \alpha\in\Delta\}.
    \]
    In other words, a character on $\bfS$ is a dominant integral weight if and only if it is a non-negative integral linear combination of the fundamental weights. By \eqref{eq:dominant}, 
    \[
    \delta\in X_\ast(\bfS)^+\iff \hat{\delta}\in X^\ast(\bfS)^+.
    \]

    Let $\bfN^+=\bfU^+(\lambda_0)$, see~\eqref{eq:def-urad}. It is a maximal unipotent subgroup of $\bfG$. Also, $\bfU^+(\lambda)=\bfN^+$ for all $\lambda\in X_\ast(\bfS)$ such that $\langle \alpha,\lambda\rangle>0$ for all $\alpha\in\Phi^+$. So, $\bfN^+$ depends only on $\Phi^+$.

    A nonzero vector, say $w$, in a finite-dimensional representation of $\bfG$ is called a {\em highest weight vector\/} if it is fixed by $\bfN^+$, and $\bfS$ acts on the line containing $w$ via a character, called a {\em highest weight}. Any highest weight is a dominant integral weight. Conversely, we have the following:

    \subsubsection*{Highest weight theorem.}  Every dominant integral weight is a highest weight of a unique (up to isomorphism) irreducible, finite-dimensional representation of $\bfG$ defined over $\bbK$. 
    
    Moreover, any irreducible $\bbK$-representation of $\bfG$ admits a unique highest weight, and the corresponding weight space is one dimensional and defined over $\bbK$.

\bigskip
We fix a maximal compact subgroup $K$ of $G$ such that $S=\bfS(\bbR)$ is invariant under the Cartan involution of $G$ associated with $K$. By Iwasawa decomposition, $G=KS^0N^+$. Without loss of generality, we may assume that any norm on a finite-dimensional representation of $G$ over $\bbR$ considered in this section is $K$-invariant.

\subsubsection{Example} \label{exa:SLn} Let $\bfG=\SL_n$ over $\bbK=\bbQ$ and $\bfS$ be the full diagonal subgroup of $\SL_n$. Then $X_\ast(\bfS)\cong \bbZ^{n-1}$, where
any $\delta\in X_\ast(\bfS)$ is given by
$\delta(t)=\diag(t^{m_1},\ldots,t^{m_n})$, where $(m_1,\ldots,m_{n-1})\in \bbZ^{n-1}$, and $m_n=-(m_1+\cdots+m_{n-1})$. Then $\Phi=\{\alpha_{i,j}: i\neq j,\, 1\leq i,j\leq n\}$, where $\alpha_{i,j}(\diag(t_1,\ldots,t_n))=t_i/t_j$. Then $\bfG_{\alpha_{i,j}}$ is the copy of $\SL_2$ corresponding to the coordinates $(i',j')$, where $i',j'\in\{i,j\}$. So, 
\[
\check \alpha_{i,j}(t)=\diag(1,\ldots,t,\ldots,t^{-1},\ldots,1)
\]
where $i$-th entry is $t$, $j$-the entry is $t^{-1}$. One chooses $\Delta=\{\alpha_i:=\alpha_{i,i+1}:1\leq i\leq n-1\}$. Then the fundamental weight $\mu_i(\diag(t_1,\ldots,t_n))=t_1\cdots t_i$.

We consider the bilinear form $(\cdot,\cdot)$ on $X_\ast(\bfS)\cong \bbZ^{n-1}$ given by 
\[
((m_1,\ldots,m_{n-1}),(m'_1,\ldots,m'_{n-1}))=\sum_{i=1}^n m_im'_i,
\]
where $m_n,m'_n$ are as defined as above. It is invariant under the Weyl group of $\bfS$; the Weyl group is represented by permutations of the standard basis. 

We note that given $\delta$ as above corresponding to $(m_1,\ldots,m_{n-1})\in\bbZ^{n-1}$, we have 
\[
\hat\delta=\sum_{i=1}^{n-1}(m_i-m_{i+1})\mu_i.
\]
Thus, $\hat\delta$ is a dominant integral weight if and only if $m_1\geq m_2\geq \cdots\geq m_n$. 

Also, $\bfN^+$ is the group of all $n\times n$ upper triangular matrices with $1$ in all the diagonal entries. We choose $K=\SO(n)$. Then $G=KS^0N^+$; here, $S^0$ consists of all diagonal matrices in $G$ with positive entries. 

Let $V$ denote the standard representation of $\SL_n$ with the standard basis $e_1,\ldots,e_n$. Then for each $1\leq i\leq n-1$, $W_i=\wedge^iV$ is an irreducible representation of $\SL_n$ defined over $\bbQ$ with the highest weight $\mu_i$ and $w_i:=e_1\wedge \ldots \wedge e_i\in W_i(\bbQ)$ is a highest weight vector.

	\begin{proof}[Proof of \Cref{prop:reduction_to_eigenvector}]
		
	Given a nonzero unstable $v\in V(\bbK)$,
    by \Cref{thm:Kempf_main_theorem_4.2}, 
  we pick $\lambda\in \Lambda_v$. By \eqref{eq:modGK} and \eqref{eq:pos-Weyl}, we can pick $g_0\in \bfG(\bbK)$ such that  $\delta:=g_0^{-1} \lambda g_0\in \bfX_\ast(\bfS)^+$. Let $v'=g_0^{-1}v$. Then $\delta\in X_\ast(\bfS)^+\cap \Lambda_{v'}$.

        Let $\hat\delta\in X^\ast(\bfS)$ be as in \eqref{eq:hat}. Then by \eqref{eq:dominant}, $\hat{\delta}$ is a dominant integral weight. Therefore, by the highest weight theorem, we pick an irreducible representation $W$ of $\bfG$ defined over $\bbK$ and a vector ${w}\in W(\bbK)\neq 0$ such that ${w}$ is fixed by $\bfN^+$ and $\bfS$ acts on the line containing ${w}$ via the character $\hat{\delta}$. 
    
	    Let 
     \[
     \beta={(\delta,\delta)}/{m(v',\delta)}>0,
     \]
        where the function $m(\cdot,\cdot)$ is defined in \eqref{eq:mvdelta} and $m(v',\delta)>0$ by \Cref{thm:Kempf_main_theorem_4.2}(1). Let $g_0=(n_1g_1)^{-1}\in\bfG(\bbK)$. Then $g_0v'=v$. So, to prove \eqref{eq:temp1}, it suffices to show that there exists $C>0$ such that for any $g\in G$ we have
		\[
		\norm{g{w}}\leq C\norm{gv'}^\beta.
		\]
		
  We argue by contradiction. Suppose there exists a sequence $g_i$ in $G$ such that 
		\begin{equation} \label{eq:temp3}
		     \lim_{i\to \infty} \lVert g_iv' \rVert^\beta / \lVert g_i{w} \rVert =0.
		\end{equation}

  Let $\bfS_1=\ker(\hat\delta)^\circ$, which is a $\bbK$-split subtorus of $\bfS$ of rank $r-1$. Then $S^0=\bfS(\bbR)^0=\delta(\bbR_{> 0})\bfS_1(\bbR)^0$. The centralizer of $\delta$ in $\bfG$, denoted by $Z_\bfG(\delta)$, is a connected reductive $\bbK$-subgroup of $\bfG$ (see~\cite[Prop.1.4.3]{Conrad-notes}). Let $\bfU_1=\bfN^+\cap Z_\bfG(\delta)$. Then $\bfN^+=\bfU_1\ltimes \bfU^+(\delta)$ and $N^+=U_1 U^+(\delta)$. Let $\bfH=\bfS_1[Z_\bfG(\delta),Z_\bfG(\delta)]$. Then $\bfH$ is a connected reductive $\bbK$-subgroup, and $\bfS_1$ is a maximal torus of $\bfH$. We note that $\bfU_1\subset [Z_\bfG(\delta),Z_\bfG(\delta)]$. Therefore, by the Iwasawa decomposition, 
  \[
  G=KS^0N^+=K\delta(\bbR_{>0})S_1U_1U^+(\delta)=K\delta(\bbR_{>0})HU^+(\delta).
  \]
  We express each $g_i=k_i\delta(\tau_i)h_iu_i$, where $k_i\in K$, $\tau_i>0$, $h_i\in S_1U_1\subset H$, and $u_i\in U^+(\delta)$.

  Then, since the norm is $K$-invariant and ${w}$ is fixed by $\bfS_1$ and $\bfN^+$, by \eqref{eq:hat}, 
		\begin{equation}\label{eq:temp2}
		\lVert g_i{w} \rVert=\lVert \delta(\tau_i){w}\rVert=\lvert\hat{\delta}(\delta(\tau_i))\rvert \cdot \lVert {w} \rVert=\tau_i^{\langle \hat{\delta},\delta\rangle}\norm{{w}}=\tau_i^{(\delta,\delta)}\norm{{w}}.
		\end{equation}

  Now we express $V=\oplus_{i\in\bbZ} V_i$, where for each $i$,
  \[
V_i=\{x\in V: \delta(t)x=t^ix,\,\forall t\in\bbC^\ast\}
  \]
  is defined over $\bbK$.
		Let $\pi:V\to V_{m(v',\delta)}$ be the corresponding projection, which is defined over $\bbK$. Since $\bfH$ centralizes $\delta(\bbG_m)$, each $V_i$ is $\bfH$-invariant, and hence $\pi$ is $\bfH$-equivariant.
		
		We claim that
  \begin{equation} \label{eq:u-fixes-pi}
  \pi(uv')=\pi(v'),\, \forall u\in U^+(\delta). 
  \end{equation}

  To prove this claim, let $u\in U^+(\delta)$. Then $\delta(t)u\delta(t)^{-1}\to e$ as $t\to 0$. By the definition of $m(v',\delta)$, see \eqref{eq:mvdelta}, $v'=\sum_{i\geq m(v',\delta)} v'_i$, where $v'_i\in V_i$ for each $i$. Now, for each $i$,
  \[
  t^{-i}\delta(t)uv'_i=t^{-i}(\delta(t)u\delta(t)^{-1})(\delta(t)v'_i)=(\delta(t)u\delta(t)^{-1})v'_i\to v'_i\text{, as $t\to 0$,}
  \]
  and hence $uv'_i\in v'_i+\oplus_{j>i} V_j$. Therefore,
  \[
  uv'\in v'_{m(v',\delta)}+\oplus_{j>m(v',\delta)} V_j. 
  \]
  Hence, $\pi(uv')=v'_{m(v',\delta)}=\pi(v')$, which proves the claim.

Since $g_i=k_i\delta(\tau_i)h_iu_i$, we get
		\begin{align}\lVert{g_iv'}\rVert&=\lVert{\delta(\tau_i)h_iu_iv'}\rVert \nonumber\\
    &\geq \lVert \pi(\delta(\tau_i)h_iu_iv') \rVert \nonumber\\
    &=\lVert \delta(\tau_i)\pi(h_iu_iv') \rVert \nonumber\\
    &=\tau_i^{m(v',\delta)}\lVert h_i\pi(u_iv') \rVert \nonumber\\
  &= \tau_i^{m(v',\delta)}\lVert h_i\pi(v') \rVert 
  \text{, by \eqref{eq:u-fixes-pi}. }
  \label{eq:temp4}
		\end{align}

Combining  \eqref{eq:temp2} and \eqref{eq:temp4}, 
  \[
  \lVert g_iv' \rVert^\beta / \lVert g_i{w} \rVert=\frac{\tau_i^{m(v',\delta)\beta}\lVert h_i\pi(v') \rVert^\beta}{\tau_i^{(\delta,\delta)}\norm{{w}}}=\frac{\lVert h_i\pi(v') \rVert^\beta}{\norm{{w}}},
  \]
  as $m(v',\delta)\beta=(\delta,\delta)$. So by \eqref{eq:temp3}, $\lVert h_i\pi(v') \rVert\to 0$ as $i\to\infty$. Therefore $\pi(v')\in V(\bbK)$ is $\bfH$-unstable in $V_{m(v',\delta)}$. 
  
  We apply \Cref{thm:Kempf_main_theorem_4.2} to the representation $V_{m(v',\delta)}$ of $\bfH$. Since $\bfS_1$ is a maximal $\bbK$-split torus of $\bfH$, there exists $l\in \bfH(\K)$ such that $\Lambda_{l\pi(v')}^{\bfH}$ contains a unique element of $X_{\ast}(\bfS_1)$, say $\delta_l$. Since $\bfS_1\subset \ker(\hat\delta)$,
		\begin{equation} \label{eq:perp}
		(\delta, \delta_l)=\langle\hat\delta,\delta_l\rangle=\hat\delta\circ \delta_l = 0\in \mathrm{Hom}(\Gm, \Gm). 
		\end{equation}
		Since $l\in H\subset Z_G(\delta)\subset P(\delta)=P_{v'}$, by \Cref{thm:Kempf_main_theorem_4.2} we have $\Lambda_{lv'}=\Lambda_{v'}$ and thus $\delta\in\Lambda_{lv'}$. By the definition of $\Lambda_v$ in \Cref{thm:Kempf_main_theorem_4.2}, we have $B_{lv'} = \frac{m(lv', \delta)}{\lVert\delta\rVert}$. Also, $m(lv',\delta)=m(v',\delta)$.
		
		For any positive integer $N$, let $\delta_N=N\delta + \delta_l\in X_\ast(S)$. For $N$ large enough, we claim that
		\begin{equation} \label{eq:destablize_faster}
		\frac{m(lv', \delta_N)}{\lVert\delta_N\rVert} > \frac{m(lv', \delta)}{\lVert\delta\rVert} = B_{lv'},
		\end{equation}
		which will contradict the maximality of $B_{lv'}$.
		
		To prove the claim, consider the weight space decomposition $V = \bigoplus V_{\chi}$, where $S$ acts on $V_{\chi}$ by multiplication via the character $\chi$ of $S$. For $v\in V$, let $v_\chi$ denote its $V_\chi$ component in the above decomposition.
		Let
		\[
		\begin{split}
		\Xi 
  &= \{ \chi\in\charS \mid (lv')_\chi\neq 0 \text{ and } \langle\chi,\delta \rangle=m(lv',\delta)=m(v',\delta) \} \\
		&=\{ \chi\in\charS \mid (\pi(lv'))_\chi\neq 0 \}.
		\end{split}
		\]

Since $\delta\in \Lambda_{lv'}$, there exists $R\in\bbZ$ such that for any $\chi\in\charS\setminus \Xi$ such that $(lv')_\chi\neq 0$, we have $\langle\chi,\delta \rangle\geq m(lv',\delta)+1$, and $\langle\chi,\delta_l \rangle\geq R$. Therefore, we can pick $N_0\in\bbN$ such that for all $\chi\in\charS\setminus \Xi$ such that $(lv')_\chi\neq 0$, we have
\[
\frac{\langle \chi, \delta_{N} \rangle}{\norm{\delta_{N}} } \geq \frac{m(lv',\delta)+1+R/N}{\norm{\delta}+\norm{\delta_l}/N}\geq \frac{m(lv',\delta)+1/2}{\norm{\delta}},\,\forall N\geq N_0.
\]

		So, to prove \eqref{eq:destablize_faster}, it suffices to show that for any $\chi\in\Xi$, for all sufficiently large $N$, 
		\begin{equation}\label{eq:higher_speed}
		\frac{\langle \chi, \delta_{N} \rangle}{\norm{\delta_{N}} } >
		\frac{\langle \chi, \delta \rangle}{\norm{\delta}}.
		\end{equation}

		To prove \eqref{eq:higher_speed}, we define an auxiliary function:
		\[
		f(s) 
		= \frac{\langle \chi, \delta + s\cdot \delta_l \rangle ^ 2}{\norm{\delta + s \cdot \delta_l} ^ 2}
		= \frac{\langle \chi , \delta \rangle ^ 2 + 2s\langle \chi, \delta \rangle\langle \chi, \delta_l \rangle + s^2 \langle \chi, \delta_l\rangle ^2}
		{(\delta, \delta) + 2s(\delta, \delta_l) + s^2(\delta_l, \delta_l)}.
		\]
		Compute its derivative at $0$:
		\[
		f'(0) = \frac{2\langle \chi,\delta\rangle\langle\chi,\delta_l\rangle(\delta, \delta) - 2(\delta, \delta_l)\langle\chi, \delta\rangle^2}
		{(\delta, \delta)^2}.
		\]
		Since $\delta_l\in \Lambda_{l\pi(v')}^{H}=\Lambda_{\pi(lv')}^{H}$ and $(\pi(lv'))_\chi\neq 0$, we know that $\langle \chi, \delta_l \rangle > 0$. Also, we know that $\langle \chi, \delta \rangle = m(v',\delta)>0$. And $(\delta, \delta_l)=0$ by \eqref{eq:perp}. Therefore $f'(0) > 0$. Hence, for $N$ large, we have
	\begin{equation}\label{eq:temp14}
		f(1/N) > f(0),
		\end{equation}
		and \eqref{eq:higher_speed} follows because each side of \eqref{eq:temp14} is the square of each side of \eqref{eq:higher_speed} respectively. 
\end{proof}

\begin{remark}
   Our proof of \Cref{prop:reduction_to_eigenvector} does not use the irreducibility of $W$. We proved the following: Let $V$ be a $\bbK$-representation of $\bfG$ and suppose that $v\in V(\bbK)$ is a nonzero $\bfG$-unstable vector. By \Cref{thm:Kempf_main_theorem_4.2}, pick a $g_0\in \bfG(\bbK)$ such that $g_0^{-1}\Lambda_{v}g_0\cap X_\ast(\bfS)^+=\{\delta\}$. Let $\beta=(\delta,\delta)/m(g_0^{-1}v,\delta)>0$. Let $\hat\delta\in X^\ast(\bfS)^+$ be such that $\langle \hat\delta,\lambda\rangle=(\delta,\lambda)$ for all $\lambda\in X_\ast(\bfS)$. Let $W$ be any $\bbK$-representation of $\bfG$ with a highest weight $w\in W(\bbK)$ corresponding to weight $\hat\delta$. Then there exists a $C>0$ such that $\norm{gg_0w}\leq C\norm{gv}^\beta$ for all $g\in G$.  
\end{remark}

    We recall a lemma from \cite{Kem78}.
    
    \begin{lem}[\cite{Kem78}, Lem.1.1]\label{lem:Kempf 1.1}
        Let $X$ be an affine $\bfG$-scheme over $\bbK$ and let $Y$ be a closed $\bfG$-subscheme of $X$ over $\bbK$. Then there exists a $\bfG$-equivariant morphism $f:X\to W$ over $\bbK$, where $W$ is a finite dimensional representation of $\bfG$ over $\bbK$, such that $Y=f^{-1}(0)$.
    \end{lem}

    \begin{cor}\label{cor:reduction_to_unstable}
        Let $V$ be a representation of $\bfG$ over $\bbK$, and $v\in V(\bbK)$ such that $\bfG v$ is not Zariski closed. Then there exists an irreducible representation $W$ of $\bfG$ defined over $\bbK$, a highest weight vector $w\in W(\bbK)$ with nonzero weight $\mu\in \charS^+$, an element $g_0\in \bfG(\bbK)$, and a constant $\beta>0$ with the following property: for any $R>0$, there exists a constant $C>0$ such that for any $g\in G$,  
        \[
        \text{ if $\norm{gv}\leq R$, then $\norm{gg_0w}\leq C\norm{gv}^\beta$.}
        \]
    
    \end{cor}
\newcommand{\Zcl}{\mathrm{Zcl}}
    
    \begin{proof}
    By closed orbit lemma~\cite[I.1.8]{Bor91}, $\bfG v$ is Zariski open in its Zariski closure $\Zcl(\bfG v)$ in $V$.  Since $v\in V(\bbK)$, $\bfG(\bbK)$ is Zariski dense in $\bfG$ (see~\cite[V.18.3]{Bor91}), and $\bfG$ acts on $V$ over $\bbK$, we have $\bfG(\bbK)v$ is Zariski dense in $\bfG v$. Therefore $Y=\Zcl(\bfG v)\setminus \bfG v$ is a non-empty Zariski closed subset of $V$ defined over $\bbK$.
    
        Therefore, by \Cref{lem:Kempf 1.1}, there exists a finite dimensional representation $V_1$ of $\bfG$ defined over $\bbK$ and a $\bfG$-equivariant morphism $f:V\to V_1$ defined over $\bbK$ such that $f(v)\neq 0$, and $f(Y)=\{0\}$. Since $Y\subset \Zcl(\bfG v)$, we have $0\in Zcl(\bfG f(v))$. Therefore $f(v)$ is unstable in $V_1$. Also $f(v)\in V_1(\bbK)$. Therefore, by \Cref{prop:reduction_to_eigenvector}, there exist an irreducible representation $W$ of $\bfG$ defined over $\bbK$, and a highest weight vector $w\in W(\bbK)$, an element $g_0\in \bfG(\bbK)$, and constants $\beta>0$ and $C_1>0$ such that for any $g\in G$ we have
        \[
        \norm{gg_0w}\leq C_1\norm{gf(v)}^\beta.
        \]
        Since $f$ is a continuous map, given $R>0$, there exists a constant $C_2>0$ such that for any $v'\in V$ with $\norm{v'}\leq R$, we have $\norm{f(v')}\leq C_2\norm{v'}$. Now, the conclusion of the corollary holds for $C=C_1C_2^\beta$. 
    \end{proof}

    Let the notation be as in \S\ref{subsec:roots}. For each fundamental weight $\mu_i$, where $1\leq i\leq r$, let $W_i$ be the (unique) irreducible representation of $\bfG$ over $\bbK$ with the highest weight $\mu_i$. We pick a highest weight vector $w_i\in W_i(\bbK)$ for each $i$. The $W_i$'s are called the {\em fundamental representations\/} of $\bfG$ over $\bbK$.
	
	\begin{lem}\label{lem:reduction_to_fundamental_rep}
		Let $W$ be an irreducible representation of $\bfG$ defined over $\bbK$ with the highest weight $n_1\mu_1+\cdots+n_r\mu_r$, where each $n_i\in\bbZ_{\geq0}$, and let $w\in W(\bbK)$ be a highest weight vector. Let $N=n_1+\cdots+n_r$. Let $\Omega$ be a non-empty compact subset of $G$ whose Zariski closure is irreducible. Then there exists a constant $C>0$ such that for any $h_1,h_2\in G$,
		\[
		\sup_{\omega\in \Omega} \norm{h_1\omega h_2w} \geq C\cdot\left(\min_{1\leq i\leq r}\sup_{\omega\in \Omega} \norm{h_1\omega h_2w_i}\right)^N.
		\]
	\end{lem}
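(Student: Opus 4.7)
The plan is to reduce the inequality to a polynomial-type statement on the Zariski closure $\overline{\Omega}$, via a tensor-product embedding. Choose a positive integer $M$ (e.g., $M=\operatorname{lcm}(m_1,\dots,m_r)$) so that $k_i:=Mn_i/m_i\in\bbZ_{\geq 0}$ for every $i$; then $\sum_i k_i=MN$. Form $\tilde W:=\bigotimes_i W_i^{\otimes k_i}$ together with the highest-weight vector $v:=\bigotimes_i w_i^{\otimes k_i}$, whose weight is $\sum_i k_i m_i\omega_i=M\omega$. Since $W$ and the $W_i$ are proximal, the unique irreducible subrepresentation of $\tilde W$ with highest weight $M\omega$ has a one-dimensional highest-weight space containing $v$ and is $G$-isomorphic to the corresponding Cartan component of $W^{\otimes M}$ (which contains $w^{\otimes M}$). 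Fix $K$-invariant Hermitian norms on the $W_i$ and $W$, equip tensor products with the induced Hilbert tensor norm, and note that $\|gv\|_{\tilde W}=\prod_i\|gw_i\|^{k_i}$ and $\|gw^{\otimes M}\|=\|gw\|^M$. By Schur's lemma, the equivariant identification of the two Cartan components is a positive scalar multiple of an isometry and sends $w^{\otimes M}$ to a non-zero scalar multiple of $v$, producing a constant $c>0$ with
\[
\|gw\|^M=c\prod_{i=1}^r\|gw_i\|^{k_i}\qquad\text{for every }g\in G.
\]

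Set $f_i(\omega):=\|h_1\omega h_2 w_i\|$ and $M_i:=\sup_{\omega\in\Omega}f_i(\omega)$. The identity above, combined with the trivial bound $\prod_i M_i^{n_i/m_i}\geq(\min_i M_i)^N$, reduces the lemma to the estimate
\[
\sup_{\omega\in\Omega}\prod_i f_i(\omega)^{n_i/m_i}\gg\prod_i M_i^{n_i/m_i},
\]
where the implicit constant is allowed to depend on $\Omega$ and the representations but not on $h_1,h_2$. Raising to the $2M$-th power and letting $R_i(\omega):=f_i(\omega)^2=\|h_1\omega h_2 w_i\|^2$ (a non-negative polynomial in $\omega$ whose degree is bounded in terms of $W_i$ only), this becomes
\[
\sup_{\omega\in\Omega}\prod_i R_i(\omega)^{k_i}\geq C''\prod_i\bigl(\sup_{\omega\in\Omega}R_i(\omega)\bigr)^{k_i}.
\]

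I would prove this polynomial inequality by a compactness argument uniform in $(h_1,h_2)$. Let $V_i$ denote the finite-dimensional space of real polynomial functions on $\overline{\Omega}$ of the relevant bounded degree; then $R_i\in V_i$ irrespective of $h_1,h_2$. Since $\Omega$ is a compact, Zariski-dense subset of the irreducible variety $\overline{\Omega}$, the functional $P\mapsto\sup_\Omega|P|$ is a genuine norm on $V_i$, so the set of non-negative $P\in V_i$ with $\sup_\Omega P=1$ is compact. The continuous function $(R_1,\dots,R_r)\mapsto\sup_\Omega\prod_i R_i^{k_i}$ attains a strictly positive minimum $C''$ on the product of these compact sets: were it zero for some tuple, one would have $\overline{\Omega}\subset\bigcup_i\{R_i=0\}$ by Zariski density, hence $\overline{\Omega}\subset\{R_i=0\}$ for some $i$ by irreducibility, contradicting $\sup_\Omega R_i=1$. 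Rescaling then yields the displayed inequality, and undoing the reductions proves the lemma.

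The main technical obstacle is ensuring that the constant $C$ is uniform in $(h_1,h_2)$: although the polynomials $R_i$ genuinely depend on those data, they are confined to one fixed finite-dimensional space of bounded-degree polynomials (determined only by the $W_i$), and it is precisely this, combined with the irreducibility of $\overline{\Omega}$, that makes a single compactness argument suffice.
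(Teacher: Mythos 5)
Your proof follows the same two-step strategy as the paper: first establish a multiplicative norm identity relating $\norm{gw}$ to $\prod_i \norm{gw_i}^{n_i/m_i}$, then deduce the sup-inequality from a compactness argument on bounded-degree polynomials on the irreducible variety $\overline\Omega$. Your handling of the second step --- normalizing to $\sup_\Omega R_i = 1$, using Zariski density and irreducibility of $\overline\Omega$ to get a strictly positive minimum --- is the same mechanism as the paper's claim that $\norm{E_1E_2}\geq c\norm{E_1}\norm{E_2}$ for polynomials of bounded degree on $Z=\overline{\Omega}$. You are more careful than the paper in one respect: you explicitly clear the non-integer exponents $n_i/m_i$ by raising to the $M$-th power, which is needed to stay inside a fixed finite-dimensional polynomial space.

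The one step that is not justified as stated is the appeal to Schur's lemma to conclude that the $G$-equivariant identification of the two Cartan components is ``a positive scalar multiple of an isometry.'' Schur gives uniqueness of the $G$-map up to scalar, but it does not give compatibility with the two $K$-invariant Hermitian forms, because the Cartan component is in general not $K$-irreducible (it splits into several $K$-types), and a reducible unitary $K$-representation carries a multi-parameter family of invariant inner products. So the claim that $\Phi$ is a scalar times an isometry does not follow from what you cite. The norm identity $\norm{gw}^M=c\prod_i\norm{gw_i}^{k_i}$ is nonetheless true, and the clean way to see it is the paper's route: decompose $g=ktq$ via $G=KSQ_\emptyset$, note that $q$ fixes each highest-weight vector (this uses proximality: the highest weight line is one-dimensional and $Q_\emptyset$-invariant), and compute $\norm{gw_i}=\abs{\omega_i(t)}^{m_i}\norm{w_i}$ and $\norm{gw}=\abs{\omega(t)}\norm{w}$, whence the multiplicativity is immediate since $\omega=\sum_i n_i\omega_i$. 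With that direct verification substituted for the Schur step, your proof is correct and essentially identical to the paper's.
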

	
	\begin{proof}
		By Iwasawa decomposition, for $g\in G$, we can write $g=ktu$ for $k\in K, t\in S^0$ and $u\in N^+$. Being highest weights, $w_i$'s and $w$ are fixed by $N^+$. Without loss of generality, we may assume that the norms on $W_i$'s and $W$ are induced by $K$-invariant inner products. Therefore 
  \[
  \norm{gw_i}=\mu_i(t)\norm{w_i},\,\forall i \text{, and }\norm{gw}=\left(\prod_{i=1}^r \mu_i(t)^{n_i}\right)\norm{w}.
  \]
  We re-scale the inner product on $W$ such that $\norm{w}=\prod_{1\leq i\leq r}\norm{w_i}^{n_i}$. Then 
  \[
  \norm{gw} = \prod_{1\leq i\leq r}\norm{gw_i}^{n_i},\, \forall g\in G.
  \]
		
		Now let $F(g)=\norm{h_1gh_2w}^2$ and $F_i(g)=\norm{h_1gh_2w_i}^2$, $\forall i$. Then $F$ and $F_i$'s are regular functions on $G$, and $F(g)=\prod_{1\leq i\leq r}F_i(g)^{n_i}$. Let $Z$ be the Zariski closure of $\Omega$ in $G$; by our assumption, $Z$ is an irreducible algebraic set. We use the norm $\norm{F}=\sup_{\omega\in\Omega}\abs{F(\omega)}$ on the space of regular functions on $Z$. 
  
  We claim that for any positive integers $d_1$ and $d_2$, there exists a constant $c=c(d_1,d_2)>0$ such that for any polynomials $E_1$ and $E_2$ of degrees $d_1$ and $d_2$ respectively on $Z$, we have $\norm{E_1E_2}\geq c\norm{E_1}\norm{E_2}$. Indeed, by homogeneity, we only need to check this for $\norm{E_1}=\norm{E_2}=1$, and then the possible values of $\norm{E_1E_2}$ form a compact subset of $\bbR_{>0}$. 
  
  Therefore, there exists a constant $C>0$ such that
		\[
		\norm{F}\geq C\prod_{1\leq i\leq r}\norm{F_i}^{n_i}\geq C\cdot\left(\min_{1\leq i\leq r} \norm{F_i}\right)^{N}.
		\] 
	\end{proof}


 \section{Expansion in linear representations} \label{sect:transporters}
	In this section, we first describe a certain transporter and then prove several results on expansion in linear representations of special linear groups.

 \newcommand{\Udel}{{\bfU^-(\delta)}}
 \newcommand{\udel}{U^-(\delta)}
 
	\subsection{Description of a transporter}
        Let $\bfG$ be a connected reductive algebraic group over $\bbR$. Let $\delta\colon \Gm\to \bfG$ be a cocharacter of $\bfG$ defined over $\bbR$. Let $\bfP(\delta)$ denote the parabolic subgroup of $\bfG$ associated to $\delta$ as in \eqref{eq:def-parabolic}. Let 
        \begin{equation} \label{eq:Horo}
        \udel=U^+(\delta^{-1})=\{g\in G: \lim_{t\to 0} \delta(t)^{-1} g \delta(t)=e\},
        \end{equation}
  which is called the expanding horospherical subgroup of $G$ associated to $\delta(t)$ for $\abs{t}<1$.

 \begin{lem} \label{lem:product structure}
 Let $\bfH$ be a reductive $\bbR$-subgroup of $\bfG$ containing $\delta(\Gm)$. Then
  \[
  H\cap P(\delta)\udel=(P(\delta)\cap H)(\udel\cap H).
  \]
 \end{lem}

 \begin{proof}
Let $\bfT$ denote a maximal $\bbR$-split torus of $\bfH$ containing $\delta(\Gm)$. Then $P(\delta)\cap H$ is a parabolic subgroup of $H$ containing $T$ and $U^-(\delta)\cap H$ is the unipotent radical of $P(\delta)\cap H$. Let $N$ denote the normalizer of $T$ in $H$. Then, by Bruhat decomposition on $H$ (see \cite[21.15]{Bor91}), \[
H=(P(\delta)\cap H)N(\udel\cap H).
\]

Let $h\in H$. Then $h\in (P(\delta)\cap H)nu$ for some $n\in N$ and $u\in \udel\cap H$. 
Since $nT n^{-1}=T\subset P(\delta)\cap H$, and $\delta(\bbR^\ast)\subset T$, we have  
\[
(P(\delta)\cap H)n\delta(\bbR^\ast)=(P(\delta)\cap H)n.
\]
Therefore, in the quotient space $(P(\delta)\cap H)\backslash H$, as $t\to 0$, we have 
\begin{equation} \label{eq:htow}
(P(\delta)\cap H)h\delta(t)=(P(\delta)\cap H)(n\delta(t))(\delta(t)^{-1}u\delta(t))\to (P(\delta)\cap H)n.
\end{equation}

Further, suppose that $h\in P(\delta)\udel$. Then, as $t\to 0$, we have
\begin{equation} \label{eq:PhtoP}
P(\delta)h\delta(t)\to P(\delta) \text{ in $P(\delta)\backslash G$.}
\end{equation}
Since $(P(\delta)\cap H)\backslash H$ is compact, the natural injection $(P(\delta)\cap H)\backslash H\hookrightarrow P(\delta)\backslash G$ is a proper continuous map. Therefore, by \eqref{eq:PhtoP}, as $t\to 0$, we have
\[
(P(\delta)\cap H)h\delta(t)\to (P(\delta)\cap H) 
\text{ in $(P(\delta)\cap H)\backslash H$.}
\]
So, by \eqref{eq:htow}, we conclude that
$(P(\delta)\cap H)n=(P(\delta)\cap H)$.
Hence
\[
h\in (P(\delta)\cap H) nu\subset (P(\delta)\cap H)(\udel\cap H).
\]
\end{proof}
	
	Let $\bfW$ be a Zariski closed subset of $\bfG$ containing the identity $e$ of $\bfG$. We write $\bfW^{\circ}$ for the union of irreducible components of $\bfW$ containing $e$. Let $V$ be a finite-dimensional representation of $\bfG$ defined over $\bbR$. Let $V_{\geq0}(\delta)$ denote the direct sum of non-negative weight spaces of $V$ with respect to $\delta$; that is, the weight spaces where $\delta(t)$ acts as non-negative powers of $t$. In other words,
 \begin{align*}
    V_{\geq 0}(\delta)=\{v\in V: \lim_{t\to 0}\delta(t)v \text{ exists in }V\}=\{v\in V: \lim_{t\to 0} \norm{\delta(t)v}<\infty\}.
\end{align*}
 As in \eqref{eq:Tran},
 \[
 \Tran_G\left(w,V_{\geq0}(\delta)\right):=\{g\in G: gw\in V_{\geq0}(\delta)\}.
 \]

	\begin{lem}\label{lem:local_characterization_of_moving_into_weakly_stable}
  Suppose that $w\in V(\bbR)\setminus\{0\}$ is such that the isotropy group $\bfG_w$ of $w$ is reductive and contains $\delta(\Gm)$. Then we have the following sets of equalities:
		\begin{gather}
		   \Tran_G\left(w,V_{\geq0}(\delta)\right)\cap P(\delta)\udel=P(\delta)G_w\cap P(\delta) \udel=P(\delta)(G_w\cap \udel),
     \label{eq:ZU-}\\
		 \Tran_G\left(w,V_{\geq0}(\delta)\right)^\circ=(P(\delta)G_w)^{\circ}=P(\delta)G_w^{\circ}. \label{eq:Zo}
		\end{gather}
	\end{lem}
	\begin{proof}
We write $\bfZ=\Tran_\bfG\left(w,V_{\geq0}(\delta)\right)$, which is Zariski closed by \cite[Sect.I.1.7]{Bor91}. Since $w$ is fixed by $\delta(\Gm)$, $\bfZ$ contains the identity $e$. Using the definition of $P(\delta)$, it is straightforward to verify that $P(\delta)V_{\geq0}(\delta)=V_{\geq0}(\delta)$. Therefore $P(\delta)Z=Z=ZG_w$. In particular, 
$P(\delta)G_w\subset Z$. Hence 
\[
P(\delta)G_w\cap P(\delta) \udel\subset Z\cap P(\delta)\udel.
\]

Since $\delta(\Gm)$ is contained in the reductive subgroup $\bfG_w$ of $\bfG$, by \Cref{lem:product structure}, 
\[
G_w\cap P(\delta) \udel=(G_w\cap P(\delta))(G_w\cap \udel).
\]
Hence 
\[
P(\delta)G_w\cap P(\delta) \udel = P(\delta)(G_w\cap \udel).
\]

Since $P(\delta)Z=Z$, we have $Z\cap P(\delta)\udel=P(\delta)(Z\cap \udel)$. Therefore, to justify \eqref{eq:ZU-}, it remains to show that 
\[
Z\cap\udel\subset G_w.
\]

To prove this, since $\udel$ is a unipotent algebraic subgroup of $G$, the orbit $\udel w$ is Zariski closed in $V$, see \cite[Theorem~12.1]{Birkes71}. Hence, the map $f:\udel/(G_w\cap \udel)\to V$, given by $f(u(G_w\cap \udel))=uw$ for all $u\in \udel$, is a well-defined, injective, proper, and continuous. 

Let $u\in Z\cap \udel$. 
Then $uw\in V_{\geq0}(\delta)$.
So, $\lim_{t\to0}\delta(t)uw$ exists in $V$. Since $\delta(\Gm)$ fixes $w$, 
\[
\delta(t)uw=(\delta(t)u\delta(t)^{-1})w=f((\delta(t)u\delta(t)^{-1})(G_w\cap \udel))
\]
converges in $V$ as $t\to 0$. Since $f$ is a proper map, there exists a compact set $\Omega\subset \udel$ and $r>0$ such that $\delta(t)u\delta(t)^{-1}\subset \Omega(G_w\cap \udel)$ for all $\abs{t}<r$. Hence $u\in (\delta(t)^{-1}\omega\delta(t))(\bfG_w\cap \udel)$ for all $\abs{t}<r$. From \eqref{eq:Horo}, since $\udel$ is finite dimensional, we deduce that $\delta(t)^{-1}\Omega\delta(t)\to \{e\}$ as $t\to 0$. Therefore, $u\in G_w$. Thus, $Z\cap \udel\subset G_w$. This completes the proof of  \eqref{eq:ZU-}.

Since $P\udel$ is Zariski open dense in $G$ containing $e$, \eqref{eq:Zo} follows from \eqref{eq:ZU-}.
 \end{proof}

	\subsection{Expansion in linear representations of special linear groups}
	Let $d\geq2$ be an integer. Let $\bfG=\SL_d$, and $G=\SL_d(\bbR)$. Let $\rho:\bfG\to\GL(V)$ be a representation of $\bfG$. Let $\bfP_1$ be the parabolic subgroup of $\bfG$ which is the stabilizer of the $(d-1)$-space spanned by $e_2,\dots,e_d$ in the standard representation.
	In this subsection, we prove several results on the expansion phenomenon in representations of $\bfG$.
	
	We first recall a variant of Shah's \emph{basic lemma} (cf.~\cite[Corollary 4.6]{Sha09Invention}):
	\begin{lem}\label{lem:Shah_basic_lemma}
		Let $\bfG=\SL_d$ over $\bbR$ for $d\geq 2$, and let $\lambda:\Gm\to\bfG$ be the multiplicative one-parameter subgroup defined by $\lambda(t)=\diag (t^{-(d-1)}, t, \dots, t)$ for all $t\in\Gm$. 
  Consider a representation of $\bfG$ on a finite-dimensional vector space $V$ defined over $\bbR$. 
  Then for any $v\in V(\bbR)$ such that $v$ is not fixed by $G$, the image of the transporter $\Tran_G(v,V_{\geq0}(\lambda))$ under the natural projection $\pi:G\to P_1\backslash G$ is contained in a union of at most two proper linear subspaces of $P_1\backslash G\cong\bbP^{d-1}(\bbR)$.
	\end{lem}
 
	\begin{proof} 
            We note that $\bfP_1=\bfP(\lambda)$. Let $\bfS$ be the full diagonal subgroup of $\bfG$. We choose a set of simple roots such that $\lambda\in X_\ast(\bfS)^+$, see \eqref{eq:pos-Weyl}. For example, take $\mathbf{t}=\diag(t_1,\dots,t_d))\mapsto t_{i+1}t_i^{-1}$, for $1\leq i\leq d-1$, as the set of simple roots. Let $W=N_G(S)/S$ and $W_P=N_{P_1}(S)/S$. Let $W^P=W/W_P$. Let $\{\sigma_1,\dots, \sigma_d \}\subset N_{\bfG}(\bfS)(\bbQ)$ denote the set of representatives of $W^P$, where $\sigma_i\cdot\lambda(t)$ has entry $t^{-(d-1)}$ at the $i$-th diagonal position, and the rest of the diagonal entries are $t$. Let $B$ be the standard minimal parabolic subgroup of $G$ such that $B\subset P_1$; that is, $B$ is the group of all lower triangular matrices in $G$.
            
		We first consider the case where $\bfG v$ is not Zariski closed. 
            By \Cref{cor:reduction_to_unstable}, there exists a finite-dimensional irreducible representation $W$ of $\bfG$ defined over $\bbR$, a highest weight vector ${w}\in W(\bbR)$ with nonzero weight $\mu\in \charS^+$, an element $g_0\in \bfG(\bbR)$, and $\beta>0$ such for any $R>0$ there exists a constant $C=C_R>0$ such that 
            \begin{equation} \label{eq:VW}
          \text{$\forall g\in G$, if $\norm{gv}\leq R$, then $\norm{gg_0{w}}\leq C\norm{gv}^\beta$.}
            \end{equation}
        From this, we will deduce the following: 
            \begin{equation} \label{eq:TranVW}
                \Tran_G(v, V_{\geq0}(\lambda)) 
                \subset \Tran_G(g_0{w}, W_{\geq0}(\lambda)) 
                = \Tran_G({w}, W_{\geq0}(\lambda))g_0^{-1}.
            \end{equation}
            
            To verify this, let $g\in G$ be such that $gv\in V_{\geq 0}(\lambda)$. Then $\norm{\lambda(t)gv}\leq R$ for all $0<\abs{t}\leq t_0$, for some $t_0>0$ and $R>0$. Then by \eqref{eq:VW}, $\norm{\lambda(t)gg_0{w}}\leq CR^\beta$ for all $0<\abs{t}\leq t_0$. Therefore $gg_0{w}\in W_{\geq 0}(\lambda)$. So $gg_0\in \Tran_G({w},W_{\geq 0}(\lambda))$. This verifies \eqref{eq:TranVW}.
        
            Suppose $g\in \Tran_\bfG({w}, W_{\geq0}(\lambda))$. Then $g{w}\in W_{\geq0}(\lambda)$. Consider the Bruhat decomposition (see \cite[21.15]{Bor91}) $G=\sqcup_{\sigma\in W^P} P_1\sigma^{-1}B$, and suppose $g\in P_1\sigma^{-1}B$. Since ${w}$ is fixed by $B$, and $W_{\geq0}(\lambda)$ is $P_1$-invariant, it follows that $\sigma^{-1}{w}\in W_{\geq0}(\lambda)$. Since
            \[
            \lambda(t)\sigma^{-1}{w}=\sigma^{-1}(\sigma \lambda(t)\sigma^{-1}){w}=\sigma^{-1}\mu(\sigma \lambda(t)\sigma^{-1}){w}=t^{\langle\mu,\sigma\cdot\lambda\rangle}\sigma^{-1}{w}, \quad \forall t\in\bbR^\ast,
            \]
            we conclude that $\langle\mu,\sigma\cdot\lambda\rangle\geq0$. 
            Note that $\mu$ is of the form $\mathbf{t}\mapsto t_1^{a_1}t_2^{a_2}\cdots t_d^{a_d}$, where $a_1\leq \cdots \leq a_d$ are integers, not all $0$, satisfying $a_1+\cdots+a_d=0$. So $a_d>0$. Let $j$ be the largest integer such that $a_j\leq0$. So $j<d$. For any $1\leq k\leq d$, 
            \begin{align*}
                \langle\mu, \sigma_k\cdot\lambda\rangle&=(a_1,\ldots,a_{k-1},a_k,a_{k+1},\ldots,a_d)\cdot(1,\ldots,1,-(d-1),1,\ldots,1)=-a_kd.
            \end{align*}
            Therefore, since $\langle\mu, \sigma\cdot\lambda\rangle\geq0$, we get $\sigma\in \{\sigma_1,\dots,\sigma_j\}$. Hence, 
            \[
            \Tran_G({w},W_{\geq0}(\lambda))\subset\bigsqcup_{i=1}^jP_1\sigma_i^{-1}B=P_1 H_{j},
            \]
            where $H_j$ denotes the stabilizer of $e_1\wedge\cdots\wedge e_j$ in $\bigwedge^j\bbR^n$. 
            Note that the image of $P_1H_j$ under $\pi$ is a proper linear subspace of $P_1\backslash G\cong\bbP^{d-1}(\bbR)$ of dimension $j-1<d-1$. Since the $G$-action sends linear subspaces to linear subspaces of the same dimensions, by \eqref{eq:TranVW}, the image of $\Tran_\bfG(v, V_{\geq0}(\lambda))$ under $\pi$ is also contained in a proper linear subspace.
		
		Now assume that $\bfG v$ is Zariski closed. 
		Take any $g\in\Tran_G(v,V_{\geq0}(\lambda))$. Then$\lambda(t)gv$ converges to some $w\in G v$ as $t\to0$. In particular, $w$ is fixed by $\lambda(\Gm)$. Also, since $\bfG w=\bfG v$ is Zariski closed, $G/G_w\cong Gw$ is an affine variety. And since $G$ is reductive, it follows from Matsushima's criterion (see~\cite[7.10]{Bor69}) that $G_w$ is reductive. 
    So, by \Cref{lem:local_characterization_of_moving_into_weakly_stable}, $\Tran_\bfG(w, V_{\geq0}(\lambda))$ is contained in the union of $P(\lambda)(G_w\cap\udel)$ and the complement of $P(\lambda)U^-(\lambda)$. Since $G$ is a simple Lie group, the smallest reductive subgroup of $G$ containing $\lambda(t)$, for some $0<\abs{t}<1$, and the expanding horospherical subgroup $U^-(\lambda)$ of $\lambda(t)$ (see \eqref{eq:Horo}) in $G$ equals $G$. Since $w$ is not $G$-fixed, and $\lambda(\bbR^\ast)\subset G_w$ we get that $G_w\cap U^-(\lambda)$ is a proper subgroup of $U^-(\lambda)$. Now the images of $P(\lambda)(G_w\cap U^{-}(\lambda))$ and the complement of $P(\lambda)U^-(\lambda)$ under the projection $\pi$ are proper linear subspaces of $P_1\backslash G\cong \bbP^{d-1}(\bbR)$. 
	\end{proof}
	
	The following technical lemma will be useful.
	
	\begin{lem}\label{lem:quantitative_Shah_basic_lemma}
		Let $G=\SL_d(\bbR)$ for $d\geq2$, and $g_t=\diag (e^{(d-1)t},e^{-t},\dots, e^{-t})$. Let $\Omega$ be a compact subset of $G$, such that the image $\pi(\Omega)$ of $\Omega$ under the natural projection $\pi:G\to P_1\backslash G$ is not contained in a union of any two proper linear subspaces of $P_1\backslash G\cong\bbP^{d-1}(\bbR)$. Let $V$ be a finite-dimensional real representation of $G$ with a norm $\norm{\cdot}$. 
        There exists $D>0$ such that for any $v\in V$,
    \begin{equation} \label{eq:Dnormv}
    \sup_{\omega\in\Omega}\norm{g_t\omega v}\geq D\norm{v}.
    \end{equation}
    
    In fact, for any $v\in V$ not fixed by $G$, there exists $C(v)>0$ such that for all $t\geq0$, 
		\begin{equation} \label{eq:exponential-expansion}
		\sup_{\omega\in\Omega}\norm{g_t\omega v}\geq C(v)e^{t}\norm{v}.
		\end{equation}
  
		Moreover, if there is no nonzero $G$-fixed vector in $V$, then there exists a constant $C>0$ such that for all $v\in V$, and $t\geq 0$,
        \begin{equation} \label{eq:noGfixed}
		\sup_{\omega\in\Omega}\norm{g_t\omega v}\geq Ce^{t}\norm{v}.
		\end{equation}
	\end{lem}
	
	\begin{proof}
		Let $\lambda$ be as in \Cref{lem:Shah_basic_lemma}. Since $\pi(\Omega)$ is not contained in a union of any two proper linear subspaces of $\bbP^{d-1}(\bbR)$, it follows from \Cref{lem:Shah_basic_lemma} that the set $\Omega v$ is not contained in $V_{\geq0}(\lambda)$. Hence 
		\begin{equation}\label{eq:non-trivial plus part}
		C(v):=\sup_{\omega\in\Omega}\frac{\norm{\pi_{<0}(\omega v)}}{\norm{v}} > 0,
		\end{equation}
		where $\pi_{<0}:V\to V_{<0}(\lambda)$ is the $\lambda$-equivariant projection to the direct sum of the eigenspaces where $\lambda(s)$ acts as a negative power of $s\in\bbR^\ast$. Since $g_t=\lambda(e^{-t})$ for all $t\in\bbR$, 
  \[
  V_{<0}(\lambda)=\{v\in V: g_{-t}v=\lambda(e^{t})v\to 0 \text{ as $t\to \infty$}\} 
  \]
		
		For any $w\in V_{<0}(\lambda)$ and any $t\geq0$, putting $s=e^{-t}\leq 1$, 
  
		\begin{equation}\label{eq:smallest positive weight}
		\norm{g_tw}=\norm{\lambda(s)w}\geq s^{-1}\norm{w}=e^t\norm{w}.
		\end{equation}
		
		Now we have
		\begin{equation}
		\begin{split}
		\sup_{\omega\in\Omega}\norm{g_t\omega v}
		&\geq\sup_{\omega\in\Omega}\norm{\pi_{<0}(g_t\omega v)}\\
		&= \sup_{\omega\in\Omega}\norm{g_t\pi_{<0}(\omega v)}\\
		&\stackrel{\eqref{eq:smallest positive weight}}{\geq} e^{t}\sup_{\omega\in\Omega}\norm{\pi_{<0}(\omega v)}\\
		&\stackrel{\eqref{eq:non-trivial plus part}}{=} C(v)e^{t}\norm{v},
		\end{split}
		\end{equation}
		where the first equality follows from the $\lambda$-equivariance of $\pi_{<0}$. This proves \eqref{eq:exponential-expansion}. 
		
		Note that we have a well-defined map $f:\bbP(V)\to\bbR_{\geq 0}$ given by $[v]\mapsto C(v)$. Suppose there is no nonzero $G$-fixed vector in $V$, then the image of $f$ is contained in $\bbR_{>0}$. Since $f$ is continuous, the image of $f$ is compact. Hence \eqref{eq:noGfixed} holds for $C:=\min_{v\neq0}C(v)>0$. 

        One obtains \eqref{eq:Dnormv} by expressing $V$ as a direct sum of a $G$-fixed subspace and a complementary $G$-invariant subspace and then applying \eqref{eq:noGfixed} to the complementary subspace.
	\end{proof}

	\section{Consequences of boundedness in linear representations}\label{sect:consequences of linear focusing}
	Let $\bfG=\SL_n$, and let $\alpha_i$ be the root $\diag(t_j)\mapsto t_it_{i+1}^{-1}$ for $1\leq i\leq n-1$. Then $\Delta=\left\{ -\alpha_i \right\}_{1\leq i\leq n-1}$ form a set of simple roots. Let $\bfP_i$ be the maximal parabolic subgroup of $\bfG$ associated to $\Delta\setminus\{-\alpha_i\}$ for $1\leq i\leq n-1$, and let $P_i=\bfP_i(\bbR)$. In block matrix form,
	\begin{equation} \label{eq:Pi}
	P_i = \left\{
	\begin{pmatrix}
	A_{i\times i} & 0_{i\times(n-i)} \\
	C_{(n-i)\times i} & D_{(n-i)\times(n-i)}
	\end{pmatrix}\in \SL_n(\bbR)
	\right\}.
	\end{equation}
	Let $U_i$ be the unipotent radical of $P_i$. Let $L_i$ be the marked Levi subgroup of $P_i$, which is of the form
	\begin{equation}\label{eq:definition of L_i}
	L_i = \left\{
	\begin{pmatrix}
	A_{i\times i} & 0_{i\times(n-i)} \\
	0_{(n-i)\times i} & D_{(n-i)\times(n-i)}
	\end{pmatrix}\in \SL_n(\bbR)
	\right\}.
	\end{equation}

    Let
    \begin{equation} \label{eq:definition of H_i}
    H_i = 
	\left\{\begin{pmatrix}
	A & \\
	& I_{n-i} 
	\end{pmatrix}\colon A\in\SL_i(\bbR)\right\}
    \end{equation}
	
	We recall some notations from the introduction. Let $g_t = \diag(e^{(n-1)t},e^{-t},\dots,e^{-t})$. Let $\cM$ be a compact connected analytic subset of $P_1\backslash G\cong \bbP^{n-1}(\bbR)$, and let $\cL_\cM$ denote the linear span of $\cM$ in $P_1\backslash G$, and suppose the dimension of $\cL_\cM$ is $d-1$. Since the $(d-1)$-dimensional linear subspaces of $P_1\backslash G$ are parameterized by $P_d\backslash G$, $\cL_\cM$ can be written as $P_1P_d\gM$ for some $\gM\in G$, where the class $[\gM]=P_d\gM$ parametrizes $\cL_\cM$. Then, we can write 
 \begin{equation} \label{eq:Omega}
 \cM=\Omega \gM,
 \end{equation} 
 where $\Omega$ is a compact subset of $P_1P_d=P_1L_d=P_1H_d$. Without loss of generality, we may assume that $\Omega$ is a compact subset of $H_d$. We also know that the image of $\Omega$ in $P_1\backslash G\cong \bbP^{n-1}(\bbR)$ has linear span exactly equal to $P_1\backslash P_1L_d$.
	
	Suppose that there exists a representation $V$ of $\bfG$, a sequence $t_i\to\infty$, a sequence $\{\gamma_i\}$ in $\Gamma$, a nonzero vector $v_0\in V(\bbQ)$, and a constant $C>0$ such that
	\begin{equation} \label{eq:linear_focusing_g_t}\tag{$\spadesuit$}
	\sup_{\omega\in\Omega}\norm{g_{t_i}\omega\gM\gamma_i v_0}\leq C, \; \forall i.
	\end{equation}
 
	The main goal of this section is to derive consequences of \eqref{eq:linear_focusing_g_t}. More precisely, we will find those $\gM$ for which \eqref{eq:linear_focusing_g_t} could possibly hold.
	
	\begin{prop}\label{prop:consequence of linear focusing}
		Suppose there exists a representation $V$ of $\bfG$, a sequence $t_i\to\infty$, a sequence $\{\gamma_i\}$ in $\Gamma$, a nonzero vector $v_0\in V(\bbQ)$ not fixed by $G$, and a constant $C>0$ such that \eqref{eq:linear_focusing_g_t} holds. Then $d<n$, and at least one of the following three statements holds:
		\begin{enumerate}
			\item \label{itm:focusing-1} There exists $t_i'\to\infty$, $C'>0$ and $v_i\in \bbZ^n\setminus\{0\}$ such that
			\[
			\sup_{\omega\in\Omega}\norm{g_{t_i'}\omega\gM v_i}\leq C', \; \forall i.
			\]
			
			\item \label{itm:focusing-2} There exist integers $r\geq d$, $m\geq 2$, and a number field $\bbK\subset\bbR$ such that $[\bbK:\bbQ]=m$, $n=mr$, and
			\[
			\gM\in P_dP_r\bfG(\bbK).
			\]
			
			\item \label{itm:focusing-3} $n\geq4$ is even, $d=2$, and there exists $C'>0$ and $w_0\in\wedge^2\bbZ^n$ such that
			\[
			\sup_{\omega\in\Omega}\norm{g_{t_i}\omega\gM\gamma_i w_0}\leq C', \; \forall i.
			\]
		\end{enumerate}
	\end{prop}

        We remark that $w_0$ is usually not decomposable (see \Cref{rem:decomposable} for adefinition). The rest of the section is devoted to proving \Cref{prop:consequence of linear focusing}.

 \subsubsection*{Decomposition of the translating flow}
	Let
	\begin{equation} \label{eq:btct}
	b_t=\begin{pmatrix}
	e^{\frac{n-d}{d}t}I_d & \\ 
	& e^{-t}I_{n-d}
	\end{pmatrix},\;
	c_t=\begin{pmatrix}
	e^{\frac{nd-n}{d}t} & & \\
	& e^{-\frac{n}{d}t}I_{d-1} & \\
	& & I_{n-d}
	\end{pmatrix}
	\end{equation}

	One has $g_t=c_tb_t$. Since $b_t$ centralizes $L_d$, we can rewrite \eqref{eq:linear_focusing_g_t} as
	\begin{equation} \label{eq:linear_focusing_c_tb_t}\tag{$\clubsuit$}
	\sup_{i\in\bbN} \sup_{\omega\in\Omega}\norm{c_{t_i}\omega b_{t_i}\gM\gamma_i v_0}<\infty.
	\end{equation}
	
	Let 
 \begin{equation}
 \label{eq:Md}
    M_d=\left\{\begin{pmatrix}
	A & \\
	& t^{-1}I_{n-d}
	\end{pmatrix}\colon A\in\GL_d \text{ and } \det A =t^{(n-d)}\right\}\subset G.
 \end{equation}
	The next lemma will be important to analyze the cases when $\bfG v_0$ is Zariski closed in $V$.
	
	\begin{lem}\label{lem:v_infty fixed by H_d}
		Suppose that \eqref{eq:linear_focusing_c_tb_t} holds and that $\bfG v_0$ is Zariski closed in $V$. Then the set $\{ b_{t_i}\gM\gamma_i v_0 \}_{i\in\bbN}$ has an accumulation point, say $v_\infty\in Gv_0\subset V\setminus\{0\}$. Moreover, $v_\infty$ is fixed by $H_d$. In particular, a conjugate of the stabilizer of $v_0$ contains $H_d$, and hence $d<n$.

  Furthermore, if $\{\gamma_i v_0:i\in \bbN\}$ is bounded, then $v_\infty$ is fixed by $M_d$, and hence the stabilizer of $v_0$ contains a conjugate of $M_d$.
	\end{lem}
	
	\begin{proof}
		Applying \eqref{eq:Dnormv} of \Cref{lem:quantitative_Shah_basic_lemma} for $H_d$ and $\Omega$, from \eqref{eq:linear_focusing_c_tb_t} we conclude that
		\begin{equation*} 
		\sup_{i\in\bbN} \norm{b_{t_i}\gM\gamma_i v_0}<\infty.
		\end{equation*}
		Hence, after passing to a subsequence, $b_{t_i}\gM\gamma_i v_0\to v_\infty$ for some $v_\infty\in V$. 

  Since $\bfG v_0$ is Zariski closed, we have that $Gv_0$ is closed in the Hausdorff topology (see e.g.~\cite[Section 3.2]{PR94}). Therefore, there exists $g\in G$ such that $v_\infty = gv_0$. We write $\bfF = \bfG_{v_0}$ and $H = G_{v_\infty}$, then we know $\bfF$ is a reductive $\bbQ$-subgroup of $\bfG$ by Matsushima's criterion (see~\cite[7.10]{Bor69}), and $F = g^{-1}H g$.
		
		Furthermore, we claim that $v_\infty$ is fixed by $H_d$. Indeed, when considering the restricted representation to $H_d$, we have a decomposition
		$
		V=V_{0} \oplus V_{1},
		$
		where $V_0$ is the subspace of all $H_d$-fixed vectors and $V_1$ is the $H_d$-invariant complement of $V_0$ in $V$. Let $\pi_0$ and $\pi_1$ be the $H_d$-equivariant projections from $V$ to $V_0$ and $V_1$, respectively. Suppose that $\pi_1(v_\infty)\neq 0$, then there exists $\delta>0$ such that $\norm{\pi_1(b_{t_i}\gM\gamma_i v_0)}>\delta$ for all sufficiently large $i$. By \eqref{eq:noGfixed} of \Cref{lem:quantitative_Shah_basic_lemma}, there exists a constant $C>0$ such that for all $v\in V_1\setminus\{0\}$ and all $t\geq 0$,
		\[
		\sup_{\omega\in\Omega}\norm{c_t\omega v}\geq Ce^{t} \norm{v}.
		\] 
		Hence $\sup_{\omega\in\Omega}\norm{c_{t_i}\omega b_{t_i}\gM\gamma_i v_0}\to\infty$ as $t_i\to\infty$, but this contradicts \eqref{eq:linear_focusing_c_tb_t}. Hence we must have $\pi_2(v_\infty)=0$, and thus $v_\infty$ is fixed by $H_d$.

        We note that if $d=n$, then $G=H_d$ fixes $v_0$, which contradicts the assumption of \Cref{prop:consequence of linear focusing}. Therefore, $d<n$.
        
        Further, suppose that $\{\gamma_i v_0:i\in\bbN\}$ is bounded, then after passing to a subsequence, we may assume that $\gamma_i v_0=\gamma_{i_0}v_0$ for all $i\geq i_0$. It follows that $b_{t_i}\gM\gamma_{i_0} v_0\to v_\infty$ as $i\to\infty$. Hence $v_\infty$ is fixed by $\{b_t\}$. 
	\end{proof}
	
	We will prove \Cref{prop:consequence of linear focusing} by analyzing the following three cases separately in the next three subsections:
	\begin{enumerate}
		\item $\bfG v_0$ is not Zariski closed in $V$.
		
		\item $\bfG v_0$ is Zariski closed, and $\gamma_i v_0 \not\to \infty$. Hence we may assume $\gamma_i v_0 = v_0\in V(\bbQ)$ for all $i$, by passing to a subsequence and replacing $v_0$ with $\gamma_i v_0$.
		
		\item $\bfG v_0$ is Zariski closed, and $\gamma_i v_0 \to \infty$ as $i\to\infty$.
	\end{enumerate}
	
	\subsection{Case (1)}\label{subsect:case 1}
	Under the assumption that $\bfG v_0$ is not Zariski closed, we analyze \eqref{eq:linear_focusing_g_t}. 
	
	The next lemma allows us to pass from fundamental representations of $\SL_n$ to the standard representation. Such a result appears in an unpublished draft by David Simmons in response to a question asked by Kleinbock in \cite[Section 6.1]{Kle08}; the draft was communicated to us by Dmitry Kleinbock. Another proof of the result due to Emmanuel Breuillard was communicated to us by Nicolas de Saxc\'e.  Both the proofs follow a similar line of geometric arguments. We will provide an algebraic proof using Pl\"ucker relations. 

 \subsubsection*{Pl\"ucker relations}
	Let $k\in\{1,\ldots,n-1\}$. For $v\in\bigwedge^k\bbR^{n}$, we can write
	\begin{equation} \label{eq:Plu}
	v=\sum_{1\leq i_1<i_2<\cdots<i_k\leq n}C_{i_1\cdots i_k}e_{i_1}\wedge\cdots\wedge e_{i_k},
	\end{equation}
 where $\{e_1,\ldots,e_n\}$ denotes the standard basis of $\bbR^{n}$, and define $\norm{v}=\sup_{i_1<\ldots<i_k} \abs{C_{i_1\cdots i_k}}$. We have that $[v]$ is in the image of the Pl\"ucker embedding $\Gr(k, n)(\bbR)\hookrightarrow\bbP(\bigwedge^k\bbR^{n})$ if and only if the coordinates $C_{i_1\cdots i_k}$ satisfy the following \emph{Pl\"ucker relations}: For any two ordered sequences 
	\begin{gather}\label{eq:Plucker relations}
	\cI=(i_1,\dots,i_{k-1})\text{, where }  i_1<\dots<i_{k-1} \text{ and } \\
 \cJ=(j_1,\dots,j_{k+1})\text{, where } j_1<\dots<j_{k+1} \text{, we have }\\
	\sum_{l=1}^{k+1}(-1)^lC_{i_1\cdots i_{k-1}j_l}C_{j_1\cdots\widehat{j_l}\cdots j_{k+1}}=0,
	\end{gather}
	where for a permutation $\sigma$ of $(i_1<\dots<i_k)$, $C_{\sigma(i_1)\cdots\sigma(i_k)}:=\mathrm{sgn}(\sigma)C_{i_1\cdots i_k}$. Here, we set $C_{l_1\cdots l_k}=0$ if $l_1,\ldots,l_k$ have repetitions.

    \begin{remark} \label{rem:decomposable} We say that a nonzero $v\in\bigwedge^k\bbR^n$ is \emph{decomposable} if $v=v_1\wedge\cdots\wedge v_k$ for some linearly independent $v_1,\dots,v_k\in\bbR^n$. In this case, let $\Delta_v$ denote the $\bbZ$-span of $\{v_1,\ldots,v_k\}$ and $\covol(\Delta_v)$ denote the co-volume of the lattice $\Delta_v$ in its $k$-dimensional $\bbR$-span. Then  $\covol(\Delta_v)$ equals the Euclidean norm of $v$ with respect to its coordinates as in \eqref{eq:Plu}, see \cite[Lemma~1.4]{Dani:nondiv}. Hence
    \begin{equation} \label{eq:covol-norm}
        \covol(\Delta_v)\leq \norm{v}\leq \sqrt{\binom{n}{k}}\covol(\Delta_v).
    \end{equation}
    \end{remark}
	
	\begin{lem}\label{lem:reducing to standard representation}
		There exists $C=C(n,d,\Omega)>0$ such that the following holds: for any $c>0$, for any $k\in\{1,\ldots,n-1\}$, any nonzero decomposable $w=w_1\wedge \cdots\wedge w_k\in \wedge^k\bbR^n$, where $w_1,\dots,w_k\in\bbR^n$, and any $t > (\log C+\log c-\log\norm{w})/(n-k)$, if
		\begin{equation}\label{eq:bdd}
		\sup_{\omega\in \Omega} \norm{g_t\omega {w}} \leq c,
		\end{equation}
		then there exists a nonzero $v$ in the $\bbZ$-span of $\{w_1,\ldots,w_k\}$ such that 
		\[
		\sup_{\omega\in \Omega} \norm{g_{t'}\omega v} \leq Cc^{1/k} \text{, where } t'=((n-k)t-\log c+\log\norm{{w}})/n.
		\]
	\end{lem}
 
	\begin{proof}
		In this proof, all the implicit constants in $``\ll"$ depend only on $n$, $d$, $\Omega$, and $k$. 
		
		Let $V_1$ denote the span of $\{e_1,\dots,e_d\}$ and $V_2$ denote the span of $\{e_{d+1},\dots,e_n\}$. We have the decomposition
		\[
		\wedge^k\bbR^n=\oplus_i \wedge^iV_1\otimes\wedge^{k-i}V_2.
		\]
		Let $\pi_i$ denote the projection from $\wedge^k\bbR^n$ to $W_i:=\wedge^iV_1\otimes\wedge^{k-i}V_2$. Note that $b_t$ acts on $W_i$ by scalar multiplication by $e^{(ni/d-k)t}$. Notice that $g_t\omega{w}=c_t\omega b_t{w}$. 
		
		For $i\geq1$ such that $W_i$ is non-trivial, we apply \Cref{lem:quantitative_Shah_basic_lemma} for $L_d$ and $c_t$. We note that the only non-negative eigenvalue of $c_t$ on $W_i$ is $e^{(n-ni/d)t}$. Then there exists $C_1>0$ such that from \eqref{eq:bdd} we conclude that $\norm{\pi_i(b_t{w})}\leq C_1 ce^{-(n-ni/d)t}$. Since $\pi_i$ is $b_t$-equivariant, we have $\norm{\pi_i({w})}\leq C_1ce^{-(n-k)t}$. For any choice of $C\geq C_1$, by our assumption, 
  \[
  (n-k)t > \log {C_1}+\log c-\log\norm{{w}}.
  \]
  Therefore $\norm{\pi_i({w})}<\norm{{w}}$ for all $i\geq 1$. Since we consider the sup-norm, 
     \[ 
        \norm{{w}}=\norm{\pi_0({w})}.
    \]
    
		Now $H_d$ acts trivially on $W_0$, so $\norm{\pi_0({w})}\leq ce^{kt}$. Write $\norm{\pi_0({w})}=ce^{\delta t}$, where $\delta\leq k$. We claim that for all $i\geq1$ we have
		\begin{equation}\label{eq:size}
		\norm{\pi_i({w})}\ll ce^{(-i(n-k)-(i-1)\delta)t}.
		\end{equation}
		We verify this claim by induction. The base case $i=1$ is already established. For the inductive step, suppose that \eqref{eq:size} holds for $1\leq i \leq m-1$. We write
		\[
		{w}=\sum_{i_1<i_2<\cdots<i_k}C_{i_1\cdots i_k}e_{i_1}\wedge\cdots\wedge e_{i_k}.
		\]
		Since we are taking the sup-norm, there exist $d+1\leq p_1<p_2<\cdots<p_k\leq n$ and $1\leq q_1 < q_2 <\cdots< q_m\leq d <d+1\leq q_{m+1}<\cdots < q_{k}\leq n$ such that $\norm{\pi_0({w})} = \abs{C_{p_1\cdots p_k}}$ and $\norm{\pi_{m}({w})} = \abs{C_{q_1\cdots q_k}}$. By \eqref{eq:Plucker relations},
		for the two ordered sequences $\cI=(q_1<\cdots<q_{k-1})$ of size $k-1$ and $\cJ=(q_k<p_1<\cdots<p_k)$ of size $k+1$, and the triangle inequality, we have
		\begin{align*}
		\norm{\pi_m({w})}\norm{\pi_0({w})}
		&=\abs{C_{q_1\cdots q_k}}\cdot\abs{C_{p_1\cdots p_k}}\\
		&\leq \sum_{l=1}^k \abs{C_{q_1\cdots q_{k-1}p_l}\cdot C_{q_kp_1\cdots\hat{p_l}\cdots p_k}}\\
		&\leq k\norm{\pi_{m-1}({w})}\norm{\pi_1({w})}
		\end{align*}
		Hence by the induction hypothesis, we have
		\begin{align*}
		\norm{\pi_m{{w}}}
		&\ll\norm{\pi_{m-1}({w})}\norm{\pi_1({w})}\norm{\pi_0({w})}^{-1}\\
		&\ll ce^{(-(m-1)(n-k)-(m-2)\delta)t}ce^{-(n-k)t}(ce^{\delta t})^{-1}\\
		&=ce^{(-m(n-k)-(m-1)\delta)t}.
		\end{align*}
		Therefore \eqref{eq:size} holds for $i=m$, and the claim is verified.
		
		It follows from \eqref{eq:size} that
		\begin{equation} \label{eq:covol}
		\norm{b_{\frac{n-k+\delta}{n}dt}{w}} \ll ce^{(\delta-\frac{kd(n-k+\delta)}{n})t}.
		\end{equation}
  
		Since $w=w_1\wedge\ldots \wedge w_k\neq 0$, the $\bbZ$-span of $\{w_1,\ldots,w_k\}$, say $\Delta_w$, is a lattice in its $k$-dimensional $\bbR$-span. In view of \eqref{eq:covol-norm} and \eqref{eq:covol}, the covolume of the lattice $b_{\frac{n-k+\delta}{n}dt}\Delta_w$ in its $k$-dimensional $\bbR$-span is $\ll ce^{(\delta-\frac{kd(n-k+\delta)}{n})t}$. Therefore by Minkowski's convex body theorem, there exists a nonzero $v\in\Delta_w$ such that  
		\[
		\norm{b_{\frac{n-k+\delta}{n}dt}v} \ll c^{1/k}e^{(\frac{\delta}{k}-\frac{d(n-k+\delta)}{n})t}.
		\] 
		Since $\delta\leq k$, we have
		\[
		\frac{\delta}{k}-\frac{d(n-k+\delta)}{n} \leq -(d-1)\frac{n-k+\delta}{n}.
		\]
		Let $t'=\frac{n-k+\delta}{n}t=((n-k)t+\delta t)/n$, and we recall that $\delta t=\log\norm{\pi_0(v)}-\log c=\log\norm{v}-\log c$. It follows that
		\[
		\norm{b_{dt'}v}\ll c^{1/k}e^{-(d-1)t'}.
		\]
		We write $v=v_1+v_2$, where $v_1$ belongs to the $\bbR$-spans of $\{e_1,\ldots,e_d\}$ and $v_2$ belongs to the $\bbR$-spans of  $\{e_{d+1},\ldots,e_n\}$. By \eqref{eq:btct}, $b_{dt'}v_1=e^{(n-d)t'}v_1$ and $b_{dt'}v_2=e^{dt'}v_2$. Therefore
  \[
  \norm{v_1}\ll c^{1/k} e^{-(n-1)t'} \text{ and } \norm{v_2}\ll c^{1/k}e^{t'}.
  \]
  Since $\Omega$ is a compact subset of $H_d$, 
\[
\sup_{\omega\in\Omega} \norm{\omega v_1}\ll c^{1/k} e^{-(n-1)t'} \text{ and } \Omega v_2=v_2.
\]
Since  $\norm{g_{t'}}\leq e^{(n-1)t'}$ and $g_{t'}v_2=e^{-t'}v_2$, we get
\[
\sup_{\omega\in\Omega} \norm{g_{t'}\omega v}\ll c^{1/k},
\]
and the conclusion of the lemma holds.
	\end{proof}
 
	\begin{cor}\label{cor:going to zero}
		Let $k\in\{1,\dots,n-1\}$. Let $W_k=\bigwedge^k \bbR^k$ be the $k$-th exterior power of the standard representation of $G$ and $w_k=e_1\wedge\cdots\wedge e_k$. Suppose that there exists $t_i\to\infty$ and $\gamma_i\in\Gamma$ such that
		\begin{equation}\label{eq:going to zero in wedge k}
		\sup_{\omega\in \Omega} \norm{g_{t_i}\omega\gM\gamma_i w_k}\to 0,\text{ as }i\to\infty.
		\end{equation}
		Then there exists $t_i'\to\infty$ and $v_i\in\bbZ^n\setminus\{0\}$ such that
		\begin{equation}\label{eq:going to zero in standard}
		\sup_{\omega\in \Omega} \norm{g_{t_i'}\omega\gM v_i}\to 0,\text{ as }i\to\infty.
		\end{equation}
	\end{cor}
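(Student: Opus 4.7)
The plan is to apply \Cref{lem:reducing to standard representation} directly to the sequence $\bfw_i := \gamma_i w_k \in \wedge^k \bbZ^n$, with constants $c_i := \sup_{\omega\in\Omega} \norm{g_{t_i}\omega\gamma_i w_k}$.

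First, I would verify the hypotheses of \Cref{lem:reducing to standard representation}. The vector $\bfw_i = \gamma_i e_1 \wedge \cdots \wedge \gamma_i e_k$ is decomposable by construction; it lies in $\wedge^k\bbZ^n$ because $\gamma_i\in\SL_n(\bbZ)$; and it is non-zero because $\gamma_i$ is invertible. Since the coordinates of $\bfw_i$ are $k\times k$ integer minors, not all zero, we have $\norm{\bfw_i}\geq 1$. By hypothesis $c_i\to 0$.

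Next, \Cref{lem:reducing to standard representation} produces $v_i\in\bbZ^n\setminus\{0\}$ with $\norm{v_i}\asymp \norm{\bfw_i}^{1/k}$ and $t_i'\geq (1/k)(\log\norm{\bfw_i}-\log c_i)$ satisfying
\[
\sup_{\omega\in\Omega}\norm{g_{t_i'}\omega v_i} \leq C c_i^{1/k},
\]
where $C>0$ depends only on $k$ and $\Omega$. Since $c_i\to 0$, the right-hand side tends to $0$, which gives \eqref{eq:going to zero in standard}.

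It remains to verify $t_i'\to\infty$. Using $\norm{\bfw_i}\geq 1$, we obtain
\[
t_i' \geq \tfrac{1}{k}\bigl(\log\norm{\bfw_i}-\log c_i\bigr) \geq -\tfrac{1}{k}\log c_i \longrightarrow \infty
\]
as $i\to\infty$, since $c_i\to 0^+$. This completes the proof. The only step requiring any care is checking that $\bfw_i$ qualifies as a non-zero decomposable integer vector with norm at least $1$, which is immediate from $\gamma_i\in\SL_n(\bbZ)$; the corollary is then a direct application of the lemma.
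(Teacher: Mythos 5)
Your proof is correct and takes essentially the same approach as the paper, which simply states that the corollary follows immediately from \Cref{lem:reducing to standard representation}; you supply the routine verification (decomposability, integrality, $\norm{\bfw_i}\geq 1$, and $t_i'\to\infty$) that the paper leaves implicit.
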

	
	\begin{proof}
		Let $C>0$ be as in \Cref{lem:reducing to standard representation}. Let $c_i=\sup_{\omega\in \Omega} \norm{g_{t_i}\omega\gM\gamma_i w_k}$. Note that $\gM\Gamma w_k$ is a discrete subset of $W_k\setminus\{0\}$, so $\norm{\gM\gamma_iw_k}$ has a uniform lower bound for all $i$. Hence, for every sufficiently large $t_i$, the assumption of \Cref{lem:reducing to standard representation} is satisfied. We take $t_i'=((n-k)t_i-\log c_i+\log\norm{\gM\gamma_iw_k})/{n}$. By \Cref{lem:reducing to standard representation}, we know that \eqref{eq:going to zero in standard} holds. Finally, since $t_i\to\infty$, $c_i\to0$, and $\inf_{i}\norm{\gM\gamma_iw_k}>0$,we have $t_i'\to\infty$.
	\end{proof}

    Recall the definitions of $L_d$ and $H_d$ given by \eqref{eq:definition of L_i} and \eqref{eq:definition of H_i}.
 
	\begin{prop} \label{prop:notclosed}  Let $V$ be a representation of $\bfG=\SL_n$, and let $v_0\in V(\bbQ)$ such that $\bfG v_0$ is not Zariski closed. Let $\Omega$ be a non-empty compact subset of $H_d$ whose Zariski closure is irreducible. Let $\gM\in G$. Suppose that there exists $C>0$, $t_i\to\infty$ and $\gamma_i\in\Gamma$ such that
		\begin{equation}\label{eq:bnd-C}
		\sup_{\omega\in \Omega} \norm{g_{t_i}\omega\gM\gamma_i v_0}\leq C, \,\forall i.
		\end{equation}
		Then there exist $R>0$, $v_i\in \bbZ^n\setminus\{0\}$, and $t_i'\to\infty$ such that
		\begin{equation} \label{eq:bnd-R}
		\sup_{\omega\in \Omega} \norm{g_{t_i'}\omega\gM v_i}\leq R,\,\forall i.
		\end{equation}

    Moreover, if $P_1\backslash P_1\Omega$ is not contained in a union of two proper linear subspaces of $P_1\backslash P_1H_d$, then $d<n$.
	\end{prop}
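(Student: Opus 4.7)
The plan is to convert the hypothesis ``$\bfG v_0$ is not Zariski closed'' into an honest instability condition on a related $\bbQ$-rational vector, and then run the reduction chain of \Cref{prop:reduction_to_eigenvector}, \Cref{lem:reduction_to_fundamental_rep}, and \Cref{lem:reducing to standard representation} in sequence.

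First, using the Hilbert--Mumford criterion for orbit closedness together with its $\bbQ$-rational refinement (Birkes, or Kempf's optimality theorem over $\bbQ$), I would produce an element $g_0\in\bfG(\bbQ)$ and a $\bbQ$-rational one-parameter subgroup $\lambda$ of $\bfG$ such that $v_\infty:=\lim_{t\to 0}\lambda(t)g_0^{-1}v_0$ exists in $V(\bbQ)$ and differs from $g_0^{-1}v_0$. Setting $w=g_0^{-1}v_0-v_\infty\in V(\bbQ)\setminus\{0\}$, the $\lambda$-weight decomposition of $w$ has only strictly positive weights, so $\lambda(t)w\to 0$ as $t\to 0$; by Hilbert--Mumford the vector $w$ is $\bfG$-unstable.

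Next, apply \Cref{prop:reduction_to_eigenvector} to $w$: this yields a proximal irreducible $\bbQ$-representation $W$ of $\bfG$, a highest-weight vector $w'\in W(\bbQ)$, an element $g_*\in\bfG(\bbQ)$, and constants $C_1,\beta>0$ satisfying $\norm{g g_* w'}\leq C_1\norm{g w}^\beta$ for every $g\in G$. Substituting $g=g_{t_i}\omega\gamma_i g_0$ and using $g_0 w=v_0-g_0 v_\infty$, one gets
\[
\norm{g_{t_i}\omega\gamma_i g_0 g_* w'}\leq C_1\bigl(\norm{g_{t_i}\omega\gamma_i v_0}+\norm{g_{t_i}\omega\gamma_i g_0 v_\infty}\bigr)^{\beta}.
\]
The first summand inside the parenthesis is $\leq C$ by hypothesis. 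For the second, since $g_0 v_\infty$ lies in $\overline{\bfG v_0}\setminus\bfG v_0$ and has a strictly smaller-dimensional $\bfG$-orbit, I would argue by induction on $\dim\bfG v_0$; the base case (closed $\bfG$-orbit of the boundary vector) is controlled by Kempf--Ness (minimum-norm representatives exist) together with the fact that $g_0$ has bounded denominators. After passing to a subsequence, this yields a uniform bound $\sup_{\omega\in\Omega}\norm{g_{t_i}\omega\gamma_i g_0 g_* w'}\leq R_0$.

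Finally, I would apply \Cref{lem:reduction_to_fundamental_rep} with $h_1=g_{t_i}$ and $h_2=\gamma_i g_0 g_*$ to convert this bound into a uniform upper bound on $\min_{1\leq k\leq n-1}\sup_{\omega\in\Omega}\norm{g_{t_i}\omega\gamma_i g_0 g_* w_k}$, where $w_k=e_1\wedge\cdots\wedge e_k\in\bigwedge^k\bbR^n$ is the fundamental highest-weight vector. Passing to a further subsequence to fix a single $k$, and clearing a common denominator (bounded, since $g_0$ and $g_*$ are fixed rational elements and $\gamma_i\in\Gamma=\bfG(\bbZ)$), I obtain a non-zero decomposable integer vector $\bfw_i\in\bigwedge^k\bbZ^n$ with $\sup_\omega\norm{g_{t_i}\omega\bfw_i}$ uniformly bounded. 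Then \Cref{lem:reducing to standard representation} (whose conclusion is already quantitative and bounded, not only the limit form in \Cref{cor:going to zero}) produces $v_i\in\bbZ^n\setminus\{0\}$ and $t_i'\to\infty$ satisfying $\sup_\omega\norm{g_{t_i'}\omega v_i}\leq R$. The principal obstacle is the second step: transferring the bound on $\norm{gv_0}$ to one on $\norm{gw}$ requires controlling $\norm{g_{t_i}\omega\gamma_i(g_0 v_\infty)}$ along the sequence, and the induction on $\dim\bfG v_0$ is the heart of the argument.
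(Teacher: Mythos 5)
Your opening move — choosing a $\bbQ$-rational $g_0$ and one-parameter subgroup $\lambda$ so that $v_\infty=\lim_{t\to0}\lambda(t)g_0^{-1}v_0$ exists in $V(\bbQ)$, and setting $w=g_0^{-1}v_0-v_\infty$ — does produce an unstable rational vector, but it introduces a second term whose translates you cannot control, and this is a genuine gap. After the substitution you correctly arrive (by the triangle inequality) at needing a uniform bound on $\sup_{\omega}\norm{g_{t_i}\omega\gamma_i g_0 v_\infty}$. Nothing in the hypotheses supplies such a bound: the $\gamma_i$ vary, $t_i\to\infty$, and there is no reason for the expanding flow applied to $\Gamma g_0 v_\infty$ to stay bounded. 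The ``induction on $\dim\bfG v_0$'' you sketch is circular, because the inductive statement (this very proposition, applied to $v_\infty$) takes the boundedness $\sup_\omega\norm{g_{t_i}\omega\gamma_i g_0 v_\infty}\leq C$ as a \emph{hypothesis}, which is exactly the unproven input you need. Kempf--Ness only produces a \emph{lower} bound on norms along a closed orbit, not the upper bound required here, so the base case does not go through either.

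The paper avoids the subtraction entirely. By Kempf's Lemma~1.1 the boundary $Y=\overline{\bfG v_0}\setminus\bfG v_0$ is the zero set $f^{-1}(0)$ of a $\bfG$-equivariant polynomial map $f\colon V\to V'$ into another $\bfG$-module. Then $f(v_0)$ is automatically unstable (since $f$ kills the boundary, $0\in\overline{\bfG f(v_0)}$), and — crucially — $\norm{g_{t_i}\omega\gamma_i f(v_0)}=\norm{f(g_{t_i}\omega\gamma_i v_0)}$ is bounded by a fixed polynomial in the bounded quantity $\norm{g_{t_i}\omega\gamma_i v_0}$. The bound thus transfers with no extra term to control, and there is no induction. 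From there the paper runs \Cref{prop:reduction_to_eigenvector} and \Cref{lem:reduction_to_fundamental_rep} essentially as you do.

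There is also a smaller omission in your final step. \Cref{lem:reducing to standard representation} only yields $t'\geq (1/k)(\log\norm{\bfw}-\log c)$, so $t_i'\to\infty$ requires $\norm{\bfw_i}\to\infty$. You must treat the case $\{\norm{\bfw_i}\}$ bounded separately: the paper observes that then, after passing to a subsequence, $d(\bfw_i,V_{k,\Omega})=0$ (since the $\bfw_i$ lie in a bounded region of the integer lattice and the distance is forced to zero by \eqref{eq:sup-intersection}), so $[\bfw_i]\subset V_{1,\Omega}$ and one can pick $v_i\in[\bfw_i]\cap\bbZ^n$ with $\omega v_i\in V_1$ for all $\omega\in\Omega$, making $\sup_\omega\norm{g_{t'}\omega v_i}$ as small as desired for $t'$ large. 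This case must be included for the argument to be complete.
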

	
	\begin{proof} 
  By \Cref{cor:reduction_to_unstable}, there exists an irreducible representation $W$ of $G$ defined over $\bbQ$, a highest weight vector $w\in W(\bbQ)$, an element $g_0\in G(\bbQ)$ such that, due to \eqref{eq:bnd-C}, we can pick a constant $D>0$ such that
		\[
		\sup_{\omega\in \Omega}\norm{g_{t_i}\omega\gM\gamma_i g_0 w}\leq D,\, \forall i.
		\]
		Combined with \Cref{lem:reduction_to_fundamental_rep} and \Cref{exa:SLn}, this implies that there exists $D'>0$ depending only on  $\Omega$, $w$ and $D$, such that for each $i$, there exists $k\in\{1,\ldots,n-1\}$ such that 
		\begin{equation} \label{eq:bnd-Dprime}
		\sup_{\omega\in \Omega} \norm{g_{t_i}\omega\gM\gamma_i g_0 w_k}\leq D',
		\end{equation}
        where $w_k=e_1\wedge\cdots\wedge e_k\in W_k(\bbZ)=\wedge^k\bbZ^n$. By passing to a subsequence, we may assume that $k$ is a constant for all $i$.

        Since $g_0\in G(\bbQ)$, pick $N\in\bbZ_{>0}$ such that $Ng_0$ is a $n\times n$ matrix with integer entries. 
        
		Let $\bfw_i=N\cdot\gamma_ig_0w_k\in\bigwedge^k\bbZ^n\setminus\{0\}$. Then \eqref{eq:bnd-Dprime} implies that
		\begin{equation}\label{eq:timesN}
		\sup_{\omega\in \Omega} \norm{g_{t_i}\omega\gM\bfw_i}\leq ND'.
		\end{equation}

		we note that $\bfw_i=(N\gamma_i g_0 e_1)\wedge \cdots \wedge (N\gamma_i g_0 e_k)$, where $N\gamma_i g_0 e_j\in \bbZ^n$ for each $j$, and $\log\norm{\bfw_i}\geq 0$. Now \eqref{eq:bnd-R} follows from \eqref{eq:timesN} and \Cref{lem:reducing to standard representation}.

  Furthermore, suppose that the image of $\Omega$ in $P_1\backslash G$ is not contained in a union of any two proper linear subspaces of $P_1\backslash P_1H_d$, we claim that $d<n$. Indeed, suppose $d=n$, then $H_d=G$. Since $G$ has no nonzero fixed vector in $\wedge^k\bbR^n$ for any $1\leq k\leq n-1$, the constant in \eqref{eq:exponential-expansion} of \Cref{lem:quantitative_Shah_basic_lemma} applied to the $G$ action on $\wedge^k\bbR^n$ is uniform in $v$. This contradicts \eqref{eq:bnd-Dprime}.
	\end{proof}

	We apply \Cref{prop:notclosed} to conclude from \eqref{eq:linear_focusing_g_t} that $d<n$ and there exists $t_i'\to\infty$, $C'>0$ and $v_i\in \bbZ^n\setminus\{0\}$ such that 
	\[
	\sup_{\omega\in\Omega}\norm{g_{t_i'}\omega\gM v_i}\leq C', \; \forall i.
	\]
	i.e. (\ref{itm:focusing-1}) of \Cref{prop:consequence of linear focusing} holds.
	
	\subsection{Case (2)}\label{subsect:case 2}
	$\bfG v_0$ is Zariski closed, and $\gamma_i v_0 = v_0$ for all $i$.  So \eqref{eq:linear_focusing_c_tb_t} takes the form
 \begin{equation} \label{eq:linear_focusing_c_tb_t_case2}
     \sup_{i\in\bbN} \sup_{\omega\in\Omega}\norm{c_{t_i}\omega b_{t_i}\gM v_0}<\infty.
 \end{equation}
By \Cref{lem:v_infty fixed by H_d}, $d<n$ and as $i\to\infty$,
 \begin{equation}
     \label{eq:linear_focusing_b_t_case2}
     b_{t_i}\gM v_0\to v_\infty\in Gv_0,
 \end{equation}
and $v_\infty$ is fixed by $M_d$, which denotes the group generated by $\{b_t\}$ and $H_d$.  

Let $H$ denote the stabilizer of $v_\infty$ in $G$, and $F$ the real points of the stabilizer $\bfF$ of $v_0$ in $\bfG$. Pick $g\in G$ such that $v_\infty=gv_0$. Then
\begin{equation}  \label{eq:HgFginv}
M_d\subset H=gFg^{-1}.
\end{equation}

	Let $\lambda(\tau)=\diag(\tau^{-(n-d)}I_d, \tau^dI_{n-d})$ be a multiplicative $1$-parameter subgroup corresponding to $b_t$. Here $\lambda(\tau)=b_t$ for any $\tau>0$ and $t=-d\log\tau$; so $\tau\to 0$ correspond to $t\to\infty$. So, in view of 
 \eqref{eq:linear_focusing_b_t_case2},  $\gM v_0 = \gM g^{-1}v_\infty \in V_{\geq0}(\lambda)$, and hence $\gM g^{-1}\in\Tran_G(v_\infty, V_{\geq0}(\lambda))$. Note that $P(\lambda)=P_d$, see~\eqref{eq:Pi}. 
 
We have $b_{t_i}\gM g^{-1}v_\infty\to v_\infty$ as $i\to\infty$.  Since $Gv_\infty$ is closed, the orbit map $G/H \to Gv_\infty$ is open. Hence $b_{t_i}\gM g^{-1}\in g_iH$ for some $g_i\to e$ in $G$. Since $H$ has only finitely many connected components, by passing to a subsequence and replacing $g$ by an element of $Hg$, without loss of generality, we may assume that $b_{t_i}\gM g^{-1}\in g_iH^0$ and $g_i\to e$ in $G$. We apply \Cref{lem:local_characterization_of_moving_into_weakly_stable} to conclude that 
\[
b_{t_i}\gM g^{-1}\in \Tran_\bfG\left(w,V_{\geq0}(\delta)\right)^\circ\subset P_dH,
\]
 and note that $b_{t_i}\in P_d$. Hence we have
	\begin{equation}\label{eq:gM_in_PdHg_PdgF}
	\gM\in P_dHg=P_d\,gF.
	\end{equation}
As noted in the proof of \Cref{lem:local_characterization_of_moving_into_weakly_stable}, $\bfF$ is a reductive $\bbQ$-subgroup of $\bfG$. By our assumption, $v_0$ is not $\bfG$ fixed, so $\bfF$ is a proper subgroup of $\bfG$. 
	\bigskip
	
	The rest of the subsection is devoted to classifying the pair $(g,F)$.
	
	\begin{lem} \label{lem:in-proper-parabolic}
		The reductive group $\bfF$ is contained in a proper $\Qbar$-parabolic subgroup of $\bfG$.
	\end{lem}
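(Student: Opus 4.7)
The plan is to show that $\bfF$ fixes a proper non-zero $\Qbar$-subspace of $\Qbar^n$, which is equivalent to $\bfF$ being contained in a maximal $\Qbar$-parabolic subgroup of $\bfG=\SL_n$ (the maximal parabolics of $\SL_n$ being precisely the subspace stabilizers). So the statement will follow as soon as we find one such $\Qbar$-stable subspace.

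The starting point is the setup just above the lemma: $F = g^{-1}Hg$ with $H\supset M_d$, so $g^{-1}M_dg\subset F$. Because $\bfM_d$ is connected, $M_d$ is Zariski dense in $\bfM_d$, and so taking Zariski closures promotes this to an inclusion $g^{-1}\bfM_dg\subset \bfF$ of $\bbR$-subgroups of $\bfG$. Now $\bfM_d$ manifestly preserves $V_1:=\spn(e_1,\ldots,e_d)\subset\bbR^n$, one of the two weight spaces of the central $\Gm$ through $b_t$. Consequently $g^{-1}\bfM_dg$ preserves the $d$-dimensional $\bbR$-subspace $W:=g^{-1}V_1$, and therefore so does $\bfF$.

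The next step is to pass to the Grassmannian to arithmetize this. Since $\bfF$ is defined over $\bbQ$, the locus
\[
Z := \{\,L\in\Gr(d,n) \mid \bfF\cdot L = L\,\}
\]
is a Zariski closed $\bbQ$-subvariety of $\Gr(d,n)$. We have just shown that $W\in Z(\bbR)$, so $Z$ is nonempty, and hence $Z(\Qbar)\neq\emptyset$ by Hilbert's Nullstellensatz. Picking any $W'\in Z(\Qbar)$ yields a $d$-dimensional $\Qbar$-subspace of $\Qbar^n$ that is $\bfF$-stable. Its stabilizer in $\bfG$ is a $\Qbar$-conjugate of $\bfP_d$, hence a $\Qbar$-parabolic subgroup of $\bfG$, and contains $\bfF$.

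The one delicate point is that the natural $d$-plane $W=g^{-1}V_1$ is only a priori defined over $\bbR$, since $g\in G=\SL_n(\bbR)$ need not be rational in any sense; the conclusion, however, requires a $\Qbar$-subspace. The Grassmannian-plus-Nullstellensatz step is exactly what lets us trade the real point $W$ of the $\bbQ$-variety $Z$ for a $\Qbar$-point of the same variety, without having to arithmetize $g$ itself. Everything else reduces to the standard fact that $\bfM_d$ is the Levi of the maximal parabolic $\bfP_d$ stabilizing $V_1$.
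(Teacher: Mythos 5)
The key step in your argument breaks down. You correctly establish that $g^{-1}\bfM_dg\subset\bfF$ and that $g^{-1}\bfM_dg$ preserves $W=g^{-1}V_1$, but then you assert ``and therefore so does $\bfF$.'' This does not follow: a subgroup preserving a subspace gives no information about whether the ambient group does. The possibility that $\bfF$ acts \emph{irreducibly} on $\Qbar^n$ --- and hence stabilizes no proper subspace at all, let alone $W$ --- is precisely what the lemma needs to exclude, and you have not done so. Your Grassmannian-plus-Nullstellensatz manoeuvre to pass from an $\bbR$-point to a $\Qbar$-point of the fixed-plane variety is fine, but it only helps once you already know the variety is nonempty, which is the issue.

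The paper's proof argues by contradiction: if $\bfF$ is contained in no $\Qbar$-parabolic, then it acts irreducibly on $\Qbar^n$, and the classification theorem for intermediate subgroups containing a block $\SL_d$ (\Cref{thm:classification_intermediate_subgroups}) forces $\bfF$ to be $\SL_n$ or a form of $\Sp_n$; the former is excluded because $\bfF$ stabilizes the nonzero non-invariant vector $v_0$, and the latter is excluded because $\Sp_n$ contains no conjugate of $\{g_t\}$ (its semisimple elements have eigenvalues in pairs $t_i^{\pm1}$), while $H=gFg^{-1}\supset M_d\supset\{g_t\}$. This classification step is the real content of the lemma and cannot be bypassed by the argument you give.
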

	
	\begin{proof}
		Suppose not, then $\bfF$-action on $\Qbar^n$ is irreducible. Let $\rho:\bfF\to \GL(\bbC^n)$ denote the restriction of the standard representation of $\SL_n$. Since $H_d\subset F$, we have that $\rho(\bfF)$ contains $\SL(W_1)\times 1_{W_2}$, where $W_1$ is the subspace $\bbC^n$ whose last $(n-d)$-coordinates are zero and $W_2$ is the subspace of $\bbC^n$ whose first $d$-coordinates are zero. Since $\bfF\neq\SL_n$ over $\Qbar$, by \Cref{thm:classification_intermediate_subgroups} we have that $n$ is even, $d=2$, and $\bfF\cong\Sp_n$. Hence $\bfF$ does not contain any conjugate of $\{g_t\}_{t\in\bbR}$; indeed, any diagonalizable element of $\Sp_n$ has eigenvalues $\tau_1^{\pm1},\dots,\tau_{n/2}^{\pm1}$, and thus cannot be conjugated to $g_t$ if $t\neq0$. This contradicts the fact that $\{g_t\}_{t\in\bbR}\subset M_d\subset H=gFg^{-1}$.
	\end{proof}
	
 As a consequence of \Cref{lem:in-proper-parabolic}, the reductive $\bbQ$-group $Z_\bfG(\bfF)$, the centralizer of $\bfF$ in $\bfG$,  has a non-trivial $\overline{\bbQ}$-cocharacter. By \cite[Theorem~18.2]{Bor91}, $Z_\bfG(\bfF)$ has a nontrivial maximal torus defined over $\bbQ$, say $\bfT$. Then $Z_\bfG(\bfT)$ is a proper reductive $\bbQ$-subgroup of $\bfG$. By a theorem of Chevalley (see e.g. \cite[Theorem 2.15]{PR94}), there exists a $\bbQ$-representation $W$ of $\bfG$ and a vector $w$ in $W(\bbQ)$ such that $\bfG_w = Z_\bfG(\bfT)$ and $\bfG w$ is Zariski closed in $W$. 
	
	Since $Gv_0$ is closed in $V$, the map $hF\mapsto hv_0$, $\forall h\in G$, from $G/F\to V$, is a proper map. Hence, the condition \eqref{eq:linear_focusing_c_tb_t_case2} is equivalent to the statement that $c_{t_i}\Omega b_{t_i}\gM F/F$ stays in a fixed compact subset of $G/F$ for all $i$. Since $F\subset Z_G(T)$, we deduce that $c_{t_i}\Omega b_{t_i}\gM Z_G(T)/Z_G(T)$ stays in a fixed compact subset of $G/Z_G(T)$ for all $i$. Therefore
	\begin{equation}\label{eq:Z_G(T)_in_place_of_F}
	\sup_{i\in\bbN} \sup_{\omega\in\Omega}\norm{c_{t_i}\omega b_{t_i}\gM w}<\infty.
	\end{equation}
	We can run the same arguments in this subsection with \eqref{eq:Z_G(T)_in_place_of_F} in place of \eqref{eq:linear_focusing_c_tb_t_case2}, and $Z_\bfG(\bfT)$ in place of $\bfF$. We know that $Z_\bfG(\bfT)$ is a Levi subgroup of a proper $\Qbar$-parabolic subgroup of $\bfG$.
	Hence by replacing $\bfF$ with $Z_\bfG(\bfT)$, we may assume that $\bfF$ is a Levi subgroup of a $\Qbar$-parabolic subgroup of $\bfG$. More explicitly, there exist positive integers $n_1, n_2, \dots, n_m$ and an element $l\in \bfG(\Qbar)$ such that $n= n_1+\cdots +n_m$, and 
	\begin{equation}\label{eq:definition of F}
	\bfF = l
	\begin{pmatrix}
	\GL_{n_1} & & \\
	& \ddots & \\
	& & \GL_{n_m}
	\end{pmatrix}
	l^{-1}\cap \bfG.
	\end{equation}
	It follows that 
	\begin{equation} \label{eq:ZGF}
	Z_\bfG(\bfF) = \left\{
	l\begin{pmatrix}
	t_1I_{n_1} & & \\
	& \ddots & \\
	& & t_mI_{n_m}
	\end{pmatrix}l^{-1}:\prod_{i=1}^m t_i^{n_i}=1\right\}
	\subset \bfF.
	\end{equation}
In particular, $Z_\bfG(\bfF)$ equals $Z(\bfF)$, the center of $\bfF$. 

	\begin{lem}\label{lem:Z_G(F) is Q-anisotropic}
		Suppose the $\bbQ$-torus $Z_\bfG(\bfF) =  Z(\bfF)$ is $\bbQ$-isotropic. Then there exists a representation $(\rho', W)$ of $\bfG$ and a rational nonzero vector $w_0\in W$ such that $w_0$ is unstable in $W$, and
		\begin{equation}\label{eq:temp5}
		\sup_{\omega\in\Omega}\norm{g_t\omega\gM w_0}\to 0 \text{ as } t\to\infty.
		\end{equation}
	\end{lem}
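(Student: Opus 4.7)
The plan is to combine the positive $\bbQ$-rank hypothesis with classical representation-theoretic constructions to produce the unstable vector $w_0$, and then verify the uniform contraction using the block description of $\bfF$ together with the boundedness estimate \eqref{eq:linear_focusing_c_tb_t_case2}.

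First I would use the positive $\bbQ$-rank of $Z_\bfG(\bfF)$ to select a non-trivial indivisible $\bbQ$-cocharacter $\delta \colon \Gm \to Z_\bfG(\bfF)$. From the explicit block description of $Z_\bfG(\bfF)=Z(\bfF)$ just derived, one checks that $Z_\bfG(\bfF)\subset\bfF$, so $\delta$ actually lies in $\bfF$ and fixes $v_0$. Since $\delta$ centralizes $\bfF$, the associated parabolic $\bfP(\delta)$ is a proper $\bbQ$-parabolic subgroup of $\bfG$ containing $\bfF$; after possibly replacing $\delta$ by $\delta^{-1}$ I may arrange that the fundamental weight dual to $\delta$ pairs negatively with the cocharacter defining $g_t$. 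I would then take $(\rho',W)$ to be a proximal irreducible $\bbQ$-representation of $\bfG$ with highest weight $k\delta^\vee$ for some positive integer $k$, and let $w_0\in W(\bbQ)\setminus\{0\}$ be a highest weight vector. Then $\bfP(\delta)$ stabilises $\bbQ w_0$ through a character $\chi$ satisfying $\chi(\delta(s))=s^k$; the relation $\rho'(\delta(t))w_0 = t^k w_0 \to 0$ as $t\to 0$ shows that $w_0$ is unstable, and by the choice of orientation one has $\rho'(g_t)w_0 = e^{-\alpha t}w_0$ for some $\alpha>0$.

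To verify \eqref{eq:temp5}, I would write $\gM=pgf$ with $p\in P_d$, $g\in G$, $f\in F$. Since $\bfF\subset\bfP(\delta)$, the element $f$ acts on $w_0$ by the scalar $\chi(f)$, whence $\gM w_0=\chi(f)\,pgw_0$. Decomposing $g_t=c_tb_t$ and exploiting that $b_t$ centralises $L_d$ (so $b_tpb_t^{-1}$ remains in a fixed compact set as $t\to\infty$ for $p\in P_d$), together with the convergence $b_t\gM v_0\to v_\infty$ from \eqref{eq:linear_focusing_b_t_case2} and the $\bfM_d$-invariance of $v_\infty$, one forces $pgw_0$ into a subspace of $W$ on which $g_t$ acts with only strictly negative weights. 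Combined with the boundedness of $b_t\omega b_t^{-1}$ for $\omega$ in appropriate compact sets, this will give the required uniform contraction.

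The principal obstacle lies in this last step: one must orient $\delta$ and choose $W$ so that $\gM w_0$ is guaranteed to lie in the $g_t$-contracting subspace, and carefully manage the non-commutativity between elements $\omega\in\Omega\subset P_1L_d$ and the diagonal elements $b_t$ and $c_t$. As a sanity check, in the illustrative case $\bfF=\bfL_d$ (where $Z(\bfL_d)$ has $\bbQ$-rank one and $\gM\in P_d$), one may take $W=\wedge^{n-d}\bbR^n$ and $w_0=e_{d+1}\wedge\cdots\wedge e_n$; then $\gM w_0$ and $\omega w_0$ are scalar multiples of $w_0$ for $\omega\in P_1L_d$, and the identity $\rho'(g_t)w_0 = e^{-(n-d)t}w_0$ gives the contraction transparently. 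The general case will follow the same strategy, but requires tracking the block decomposition of $\bfF$ against $\bfP_d$.
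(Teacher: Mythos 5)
Your overall strategy — use the positive $\bbQ$-rank of $Z(\bfF)$ to place $\bfF$ in a proper $\bbQ$-parabolic, then take a $\bbQ$-line that parabolic stabilises in a suitable representation and pull it back through $g$ — is the same idea the paper uses. The problem lies in your orientation step. You assert that "after possibly replacing $\delta$ by $\delta^{-1}$" one can arrange the relevant pairing to be negative, but you never rule out that it is \emph{zero}. Concretely, since $g^{-1}g_tg$ is a one-parameter subgroup of $F\subset\bfP(\delta)$, and $\bfP(\delta)$ acts on $[w_0]$ by the character $\chi_W$, the vector $gw_0$ is a $g_t$-eigenvector with eigenvalue $\chi_W(g^{-1}g_tg)$. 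For an arbitrary indivisible $\delta\in X_*(Z(\bfF))_\bbQ$ and a proximal $W$ with highest weight $k\delta^\vee$, zero is typically among the $g_t$-weights of $W$, so nothing in your argument forbids $\chi_W(g^{-1}g_tg)\equiv 1$; in that case neither orientation gives contraction and the construction fails.

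The paper's proof avoids this by making a sharper choice. It takes $\bfP\supset\bfF$ a \emph{maximal} $\bbQ$-parabolic, so that the associated $\bbQ$-line $[u]$ lives in an exterior power $W=\wedge^k\bbR^n$. For $g_t=\diag(e^{(n-1)t},e^{-t},\dots,e^{-t})$ the only two $g_t$-weights on $\wedge^k\bbR^n$ are $e^{(n-k)t}$ and $e^{-kt}$, and both are $\neq 1$ for $t\neq 0$ and $1\leq k\leq n-1$. Hence the eigenvalue $\chi(g^{-1}g_tg)$ of $gu$ is automatically $\neq 1$, and one passes to the opposite parabolic (and complementary exterior power) if it is expanding. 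Your sanity check for $\bfF=\bfL_d$ happens to take $W$ an exterior power, which is why no gap appears there, but the general argument as written does not impose this restriction. To repair your proposal, choose $\delta$ so that $\bfP(\delta)$ is a maximal $\bbQ$-parabolic (equivalently $\delta^\vee$ is proportional to a fundamental weight $\omega_k$) and take $W=\wedge^k\bbR^n$; then the eigenvalue argument goes through, and the uniform decay \eqref{eq:temp5} follows — still not automatically, but from the $M_d$-fixedness of $[gw_0]$, which forces $gw_0\in\wedge^k\langle e_{d+1},\dots,e_n\rangle$, a subspace that is preserved by $\Omega\subset P_1L_d$ (after absorbing the $P_d$-factor of $\gM$ into $\Omega$) and is uniformly contracted by $g_t$. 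The second half of your sketch, invoking $b_t\gM v_0\to v_\infty$, is not the mechanism: the contraction comes from the weight structure of the exterior power, not from \eqref{eq:linear_focusing_b_t_case2} directly.
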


	\begin{proof}
		Our assumption is equivalent to saying that $\bfF$ is contained in a proper $\bbQ$-parabolic subgroup of $\bfG$, say $\bfP$. Without loss of generality, we may assume that $\bfP$ is $\bbQ$-maximal. Then there exists an integer $1\leq k \leq n-1$ and a nonzero $\bbQ$-vector $u$ in the $k$-th exterior power $W$ of the standard representation of $\bfG$, such that $\bfP$ acts on the line $[u]$ via a $\bbQ$-character $\chi$ of $\bfP$. 
  
  Let $g$ be as in \eqref{eq:gM_in_PdHg_PdgF}. Then $Hg=gF$ and $\gM\in P_dHg$. By its definition, $\Omega$ is a compact subset of $L_d\subset P_d$, see~\eqref{eq:Pi}. Therefore $\Omega\gM\subset P_dHg$. We note that $H_d\subset M_d\subset G_{v_\infty}=H$. Since $P_d\subset P_1H_d$, we conclude that $P_dH\subset P_1H$. Therefore $\Omega \gM\subset P_1Hg=P_1gF$. Therefore there exists a compact set $\Omega_1\subset P_1$ and $\Omega_2\in F$ such that $\Omega \gM\subset \Omega_1g\Omega_2$. Then $\Omega_1':=\overline{\cup_{t\geq 0} g_t\Omega_1g_t^{-1}}$ is a compact subset of $P_1$, and 
  \begin{equation} \label{eq:Omega12}
       g_t\Omega\gM\subset \Omega_1' g_t g\Omega_2, \, \forall t\geq 0.
  \end{equation}
  
  Since $g_t\in H=gFg^{-1}$, we have $g^{-1}g_t g\in F\subset P$ for all $t\in\bbR$, and thus $g^{-1}g_t g$ acts on $[u]$ via multiplication by $\chi(g^{-1}g_tg)$. Notice that in the $k$-th exterior power $W$ of the standard representation of $\bfG$, the only two weights of $g_t$ are $t\mapsto e^{(n-k)t}$ and $t\mapsto e^{-kt}$. Hence $g_t$ cannot fix $gu$ for $t\neq0$, and it follows that $\chi(g^{-1}g_tg)\neq1$ for all $t\neq0$. If $\chi(g^{-1}g_1g)<1$, we take $w_0=u$; if $\chi(g^{-1}g_1g)>1$, we replace $\bfP$ by its opposite parabolic subgroup containing $\bfF$. In either case, $g^{-1}g_tgw_0\to 0$ as $t\to\infty$. 
  
  Therefore by \eqref{eq:Omega12},  since element of $\Omega_2\subset F\subset P$, $(g^{-1}g_t g)\subset P$, and elements of $P$ act on $w_0$ by scalars, we get
  \begin{equation}
  g_t\Omega \gM w_0\subset \Omega_1' g_tg\Omega_2w_0=\Omega_1'g (g^{-1} g_tg)\Omega_2w_0=\Omega_1'g\Omega_2(g^{-1}g_t g)w_0,\,\forall t\geq 0.
  \end{equation}
  Since $\Omega_1'$ and $\Omega_2$ are compact, \eqref{eq:temp5} holds.
	\end{proof}
	
	\begin{remark}\label{rmk:reduce to case one}
		Suppose $Z_\bfG(\bfF)$ is not $\bbQ$-anisotropic. Then by \eqref{eq:temp5} of \Cref{lem:Z_G(F) is Q-anisotropic}, the orbit $Gw_0$ is not Zariski closed in $W$, and hence by our discussion in subsection~\ref{subsect:case 1}, (\ref{itm:focusing-1}) of \Cref{prop:consequence of linear focusing} would hold. 
		
		Therefore now we assume that $Z_\bfG(\bfF) =  Z(\bfF)$ is a $\bbQ$-anisotropic torus of $\bfG$.
	\end{remark}

	Let $\bbL$ denote the splitting field of the torus $Z(\bfF)$. Its Galois group is denoted by $\cG=\Gal(\bbL/\bbQ)$. Recall that $n_1,n_2,\dots,n_m$ are the sizes of the blocks in $\bfF$ as in \eqref{eq:definition of F}.
	
	\begin{lem} \label{lem:block_sizes_are_equal}
		Suppose that $Z_\bfG(\bfF) =  Z(\bfF)$ is a $\bbQ$-anisotropic torus of $\bfG$. Then there exists a positive integer $r$ such that $n_1=\cdots =n_m=r$.
	\end{lem}
	
	\begin{proof}
		The standard representation $E$ of $\bfG$ has a weight space decomposition with respect to $Z(\bfF)$
		\begin{equation}\label{eq:eigenspace decomposition}
		E=\bigoplus_{i=1}^m E_{\chi_i},
		\end{equation}
		where $\chi_i\in X^*(Z(\bfF))$ are characters of $Z(\bfF)$ defined over $\bbL$, and $E_{\chi_i}$ consists of vectors on which $Z(\bfF)$ acts as $\chi_i$. After relabelling, we have $\dim E_{\chi_i} = n_i$. In view of \eqref{eq:ZGF}, the only relation between the $\chi_i$'s is $\prod_{i=1}^{m}\chi_i^{n_i}=1$. 
  
  The $\cG$-action on $X^*(Z(\bfF))$ is given by
		\[
		({}^\sigma\!\chi)(t) = \sigma\circ\chi\circ\sigma^{-1}(t), \quad \forall \sigma\in\cG.
		\]
		Then, for all $\sigma\in\cG$ and $i\in\{1,\ldots,n\}$, $\sigma E_{\chi_i} = E_{{}^{\sigma}\!\chi_i}$, and hence $\dim{E_{\chi_i}}=\dim E_{{}^{\sigma}\!\chi_i}$. Therefore to show that $n_1=\cdots=n_m$, it is sufficient to verify that $\cG$-action on the set $\{\chi_1,\dots, \chi_m\}$ is transitive. Indeed, suppose $\cX$ is a $\cG$-invariant proper subset, then $\prod_{\chi_i\in\cX}\chi_i^{n_i}=1$, because it is a $\bbQ$-character and $Z(\bfF)$ is $\bbQ$-anisotropic. But this gives us an additional relation between $\chi_1,\dots,\chi_m$, a contradiction.
	\end{proof}
	
	By \Cref{lem:block_sizes_are_equal} there exists $l\in \GL_n(\bbL)$ such that for any $z\in Z(F)$ we have 
	\begin{equation} \label{eq:l-F}
	z=l\begin{pmatrix}
	\chi_1(z)I_r & & \\
	& \ddots & \\
	& & \chi_m(z)I_r
	\end{pmatrix}l^{-1}. 
	\end{equation}
	Hence $\bfF = l\bfHrm l^{-1}$, where
	\begin{equation} \label{eq:Hrm}
	\bfHrm = 
	\begin{pmatrix}
	\GL_{r} & & \\
	& \ddots & \\
	& & \GL_{r}
	\end{pmatrix} \cap \bfG.
	\end{equation}
	It follows that $Z(\bfF)=l\bfTrm l^{-1}$, where
	\[
	\bfTrm = Z(\bfHrm) =
	\begin{pmatrix}
	t_1I_r & & \\
	& \ddots & \\
	& & t_mI_r
	\end{pmatrix} \cap \bfG.
	\]
	
	\subsubsection{Description of $l$. }
	
	From the proof of \Cref{lem:block_sizes_are_equal} we know that $\cG$ acts transitively on $\{\chi_1,\dots, \chi_m \}$. Let $\cH$ be the stabilizer of $\chi_1$, so $[\cG:\cH]=m$. Let $\bbK = \bbL^\cH$ be the fixed field of $\cH$. Then $[\bbK:\bbQ]=[\cG:\cH]=m$. Let $\{\sigma_1,\dots,\sigma_m \}$ be the set of embeddings $\bbK\hookrightarrow\bbC$. We choose and fix a basis $\{x_1, \dots, x_m \}$ of $\bbK$ as a $\bbQ$-vector space, and let $l_0\in\GL_n(\bbL)$ be such that
	\begin{equation}\label{eq:l_0}
	l_0^{-1}=\begin{pmatrix}
	\sigma_1(x_1)I_r & \cdots & \sigma_1(x_m)I_r \\
	\vdots & \cdots & \vdots \\
	\sigma_m(x_1)I_r & \cdots & \sigma_m(x_m)I_r
	\end{pmatrix}\in\GL_n(\bbL).
	\end{equation}
	
	\begin{remark}\label{rmk:defined over Q}
		We claim that $l_0\bfHrm l_0^{-1}$ is a $\bbQ$-subgroup of $\bfG$. Indeed, for any $\sigma\in\cG$, we have $\sigma(l_0^{-1})=w_\sigma\cdot l_0^{-1}$, where the permutation matrix $w_\sigma$ is an element of $N_{\GL_n}(\bfHrm)$, see \eqref{eq:Hrm}. Then $\sigma(l_0\bfHrm l_0^{-1})=l_0w_\sigma^{-1}\bfHrm w_\sigma l_0^{-1}=l_0\bfHrm l_0^{-1}$. Hence $l_0\bfHrm l_0^{-1}$ is defined over $\bbQ$. Moreover, $l_0\bfHrm l_0^{-1}$ is isomorphic to $\Res_{\bbK/\bbQ}^{(1)}\GL_r$ as $\bbQ$-groups.
	\end{remark}
	
	\begin{lem}\label{lem:classification of F}
		There exists $g_\bbQ\in\GL_n(\bbQ)$ such that $\bfF = g_\bbQ l_0\bfHrm l_0^{-1}g_\bbQ^{-1}$. In particular, later, we will assume that $l=g_\bbQ l_0$.
	\end{lem}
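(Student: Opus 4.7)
The plan is to characterize both $\bfF$ and $\bfF_0 := l_0\bfHrm l_0^{-1}$ as centralizers in $\bfG$ of $\bbQ$-subalgebras of $M_n$ abstractly isomorphic to $\bbK$, and then produce $g_\bbQ$ by comparing these two $\bbQ$-embeddings of $\bbK$.

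First, I would compute the centralizer $D_1 := Z_{M_n(\bbQ)}(\bfF)$. Over $\bbL$ we have $Z_{M_n(\bbL)}(\bfF) = l\cdot Z_{M_n(\bbL)}(\bfHrm)\cdot l^{-1} = l\cdot \bbL^m\cdot l^{-1}$, where $\bbL^m$ sits in $M_n(\bbL)$ as block-scalar matrices (one scalar per $r\times r$ diagonal block). The Galois group $\cG$ acts on this $\bbL$-algebra by the usual action on $\bbL$-entries composed with conjugation by $l^{-1}\sigma(l)$; tracing through the argument of Lemma~\ref{lem:block_sizes_are_equal}, this action permutes the $m$ factors via the transitive $\cG$-action on $\{\chi_1,\dots,\chi_m\}$ with stabilizer $\cH$, together with entrywise Galois action. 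Hence $D_1=(\bbL^m)^{\cG}\cong \bbL^{\cH}=\bbK$, so $D_1$ is a $\bbQ$-subalgebra of $M_n(\bbQ)$ isomorphic to $\bbK$. The identical computation applied to $l_0$ shows that $D_0 := Z_{M_n(\bbQ)}(\bfF_0)$ is also a $\bbQ$-subalgebra of $M_n(\bbQ)$ isomorphic to $\bbK$.

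Next I would observe that $\bfF = Z_\bfG(D_1)$ and $\bfF_0 = Z_\bfG(D_0)$. Indeed, via $D_1\cong \bbK$ the space $\bbQ^n$ becomes an $r$-dimensional $\bbK$-vector space, so the commutant of $D_1$ in $M_n(\bbQ)$ equals $\mathrm{End}_\bbK(\bbQ^n)\cong M_r(\bbK)$; as a $\bbQ$-algebraic group this is $\mathrm{Res}_{\bbK/\bbQ}\GL_r$, and intersecting with $\bfG=\SL_n$ recovers $\mathrm{Res}_{\bbK/\bbQ}^{(1)}\GL_r$, which agrees with $\bfF$ by a dimension count together with the obvious inclusion $\bfF\subset Z_\bfG(D_1)$. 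The same argument applies to $\bfF_0$.

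It then remains to exhibit $g_\bbQ\in\GL_n(\bbQ)$ with $g_\bbQ D_0 g_\bbQ^{-1}=D_1$, for then automatically $g_\bbQ\bfF_0 g_\bbQ^{-1}=\bfF$. Fix any $\bbQ$-algebra isomorphisms $\psi_0\colon\bbK\to D_0$ and $\psi_1\colon\bbK\to D_1$. Through $\psi_0$ (resp.\ $\psi_1$) the vector space $\bbQ^n$ becomes a $\bbK$-vector space of $\bbK$-dimension $r=n/m$; since both $\bbK$-module structures have the same dimension, they are $\bbK$-linearly isomorphic to $\bbK^r$. Any such $\bbK$-linear isomorphism is in particular a $\bbQ$-linear automorphism $g_\bbQ\in \GL_n(\bbQ)$ satisfying $g_\bbQ\psi_0(k)=\psi_1(k)g_\bbQ$ for all $k\in\bbK$, i.e.\ $g_\bbQ D_0 g_\bbQ^{-1}=D_1$, as required.

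The one place where care is needed is Step~1: correctly identifying the $\bbQ$-form of the $\bbL$-algebra $\bbL^m$, under the combined action of permutation of factors and entrywise Galois, as the field $\bbK$. This is the only genuinely arithmetic input; after that, the rest is linear algebra based on the uniqueness of $\bbK^r$ as a $\bbK$-vector space of dimension $r$.
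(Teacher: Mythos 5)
Your proof is correct, and it takes a genuinely different and more elementary route than the paper's. The paper proceeds via Galois cohomology: it identifies $\GL_n(\bbQ)$-orbits on the $\bbQ$-points of $\GL_n/N_{\GL_n}(\bfTrm)$ with $H^1(\cG,\bfNrm'(\bbL))$ by Hilbert~90, then uses the exact sequence coming from $\bfNrm'/\bfHrm'\cong\Wrm$ together with Shapiro's lemma and another application of Hilbert~90 to show the class $\partial[l]$ is determined by its image in $H^1(\cG,\Wrm)\cong\Hom(\cG,\mathfrak{S}_m)/\text{conj}$, and finally computes that both $\iota(b_l)$ and $\iota(b_{l_0})$ give the natural Galois action. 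You instead characterize $\bfF$ and $\bfF_0:=l_0\bfHrm l_0^{-1}$ intrinsically as centralizers in $\bfG$ of commutative $\bbQ$-subalgebras $D_1,D_0\subset M_n(\bbQ)$, identify both $D_1$ and $D_0$ with the field $\bbK$ by a Galois-descent computation on $\bbL^m$ (using the transitivity of $\cG$ on $\{\chi_1,\dots,\chi_m\}$ from \Cref{lem:block_sizes_are_equal}), verify the bicommutant statement $\bfF=Z_\bfG(D_1)$ by the inclusion plus a dimension count, and then conclude by the rigidity of $\bbK$-vector spaces: any two $\bbK$-module structures on $\bbQ^n$ of the same $\bbK$-dimension $r=n/m$ are intertwined by some $g_\bbQ\in\GL_n(\bbQ)$. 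The cohomological input of the paper (Hilbert~90 plus Shapiro for the inner form of $\Res_{\bbK/\bbQ}\GL_r$) is thereby repackaged as the elementary fact that a finite-dimensional vector space over a field is determined up to isomorphism by its dimension; this makes the argument shorter and avoids nonabelian $H^1$ entirely, at the cost of front-loading the identification of $D_1\cong\bbK$ and the bicommutant step, both of which you handle correctly.
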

	
	\begin{proof}
		Equivalently, we shall prove that $Z(\bfF)=g_\bbQ l_0\bfTrm l_0^{-1}g_\bbQ^{-1}$. We use the Galois cohomology approach, and the readers are referred to \cite[Section 1.3]{PR94} and \cite{SerreGaloisCohomology} for more details.
		
		Let $\bfG'=\GL_n$. Write $\bfNrm'=N_{\bfG'}(\bfTrm)$ and $\bfH_{r,m}'=Z_{\bfG'}(\bfTrm)$. Consider the homogeneous variety $\bfX = \bfG'/\bfNrm'$. Since $\bfF=l\bfHrm l^{-1}$, we have $Z(\bfF)=l\bfTrm l^{-1}$ which is also defined over $\bbQ$. For any $\sigma\in\cG$, we have $\sigma(Z(\bfF))=Z(\bfF)$, and it follows that $l^{-1}\sigma(l)\in\bfNrm'(\bbL)$. Hence $[l]\in \bfX$ is fixed by $\cG$, and therefore we have $[l]\in\bfX(\bbQ)$. Hence it suffices to prove that $[l]$ and $[l_0]$ are on the same $\bfG'(\bbQ)$ orbit in $\bfX(\bbQ)$. 
		
		By \cite[Prop.36, Cor.1]{SerreGaloisCohomology}, we have an exact sequence of $\cG$-sets
		\[
		\bfG'(\bbQ)\to\bfX(\bbQ)\stackrel{\partial}{\to}H^1(\cG,\bfNrm'(\bbL))\to H^1(\cG,\bfG'(\bbL)).
		\]
		Since the first Galois cohomology of $\bfG'=\GL_n$ vanishes by Hilbert's theorem 90, the above exact sequence tells us that the $\bfG'(\bbQ)$-orbits on $\bfX(\bbQ)$ are parameterized by $H^1(\cG,\bfNrm'(\bbL))$. Now $\partial[l]$ and $\partial[l_0]$ are represented by the 1-cocycles $b_l\colon\sigma\to l^{-1}\sigma(l)$ and $b_{l_0}\colon\sigma\to l_0^{-1}\sigma(l_0)$. So, it suffices to show that $b_l=b_{l_0}$.
		
		On the other hand, we consider the Weyl group 
		\[
		\Wrm=N_{\bfG'}(\bfTrm)/Z_{\bfG'}(\bfTrm)=\bfNrm'/\bfHrm'\cong\mathfrak{S}_m,
		\]
        where $\mathfrak{S}_m$ denotes the symmetric group of permutations on $m$ elements.
		We have another exact sequence of $\cG$-sets
		\[
		H^1(\cG, \bfHrm'(\bbL)) \to H^1(\cG, \bfNrm'(\bbL)) \stackrel{\iota}{\to} H^1(\cG, \Wrm).
		\]
		By Shapiro's Lemma and Hilbert's theorem 90, we have
		\[
		H^1(\bbQ,{}_{b_{l_0}}\!{\mathbf{H}_{r,m}'(\Qbar))}=H^1(\bbQ,\Res_{\bbK/\bbQ}(\GL_r)(\Qbar))=H^1(\bbK,\GL_r(\Qbar))=1,
		\]
        where ${}_{b_{l_0}}\!{\mathbf{H}_{r,m}'}$ is the twist of $\bfH_{r,m}'$ by the $1$-cocycle $b_{l_0}$ as defined in \cite[Ch.~1, \S5]{SerreGaloisCohomology} or \cite[Page~23]{PR94}. Therefore as $\bbQ$-groups, 
        \[
        {}_{b_{l_0}}\!{\mathbf{H}_{r,m}'}\cong l_0\bfH_{r,m}' l_0^{-1}\cong\Res_{\bbK/\bbQ}(\GL_r),
        \]
        where the first isomorphism follows from the definition of the twist, and the second isomorphism follows from the arguments as in \Cref{rmk:defined over Q}.
        
		Since the inflation-restriction sequence 
		\[
		1\to H^1(\cG, {}_{b_{l_0}}\!\bfH_{r,m}'(\bbL)) \to H^1(\bbQ,{}_{b_{l_0}}\!\bfH_{r,m}'(\Qbar)) \to H^1(\bbL,{}_{b_{l_0}}\!\bfH_{r,m}'(\Qbar))
		\]
		is exact (see \cite[Page 25]{PR94}), we have $H^1(\cG, {}_{b_{l_0}}\!\bfH_{r,m}'(\bbL))=1$. Hence, by \cite[Cor. 2 of Prop. 39]{SerreGaloisCohomology}, the fibre $\iota^{-1}(\iota(b_{l_0}))$ contains only one element $b_{l_0}$. Hence it suffices to show that $\iota(b_l)=\iota(b_{l_0})$.
		
		It is clear that the $\cG$-action on $\Wrm$ is trivial, and thus $H^1(\cG,\Wrm)$ is identified with the conjugacy classes of the group homomorphisms from $\cG$ to $\Wrm$. Note that $\Wrm$ is naturally identified with the symmetric group of the set $\{\chi_1,\dots, \chi_m \}$, as well as the set $\{\sigma_1, \dots, \sigma_m \}$. Hence $\iota(b_l)$ and $\iota(b_{l_0})$ induce $\cG$-actions on these two sets respectively. Once we show that these induced actions are conjugate to the natural Galois actions, then we are done.
		
		On one hand, by \eqref{eq:l-F}, for any $z\in Z(\bfF)(\bbQ)$ and any $\sigma\in\cG$, we have
		\[
		\begin{split}
		&l\begin{pmatrix}
		\chi_1(z)I_r & & \\
		& \ddots & \\
		& & \chi_m(z)I_r
		\end{pmatrix}l^{-1}
		=z=\sigma(z)\\
		=&\sigma(l)\begin{pmatrix}
		\sigma(\chi_1(z))I_r & & \\
		& \ddots & \\
		& & \sigma(\chi_m(z))I_r
		\end{pmatrix}\sigma(l)^{-1}\\
		=&\sigma(l)\begin{pmatrix}
		({}^\sigma\chi_1)(z)I_r & & \\
		& \ddots & \\
		& & ({}^\sigma\chi_m)(z)I_r
		\end{pmatrix}\sigma(l)^{-1},
		\end{split}
		\]
		and thus $\iota(b_l)(\sigma)\cdot\chi_i={}^\sigma\chi_i$ for $1\leq i\leq m$.
		
		On the other hand, 
		\[
		\sigma(l_0)^{-1}=\begin{pmatrix}
		\sigma\sigma_1(x_1)I_r & \cdots & \sigma\sigma_1(x_m)I_r \\
		\vdots & \cdots & \vdots \\
		\sigma\sigma_m(x_1)I_r & \cdots & \sigma\sigma_m(x_m)I_r
		\end{pmatrix},
		\]
		and thus $\iota(b_{l_0})(\sigma)\cdot\sigma_i=\sigma\sigma_i$ for $1\leq i\leq m$.
		
		Hence, up to conjugation, both actions indeed coincide with the natural Galois actions respectively.
	\end{proof}
	

    We defined $M_d$ in \eqref{eq:Md}. Let
	\[
     M_{-d} = \left\{ \begin{pmatrix}
        t I_{d} & \\
         & A
    \end{pmatrix} \colon A\in\GL_{n-d},\; \det A = t^{-d}
    \right\}.
    \]
    We note that $Z(M_d)=M_{-d}$ and $Z(M_{-d})=M_d$.
	By \eqref{eq:HgFginv}, we have $H=gFg^{-1}\supset M_d$. 
	Taking centralizers on both sides, we get $gZ(F)g^{-1}\subset M_{-d}$. Consider the right action of $G$ on the space $\bbC^n$ of $n$-dimensional row vectors, and let $\{e_1,\dots,e_n\}$ be its standard basis. Then for each $z\in Z(F)$, $gzg^{-1}$ acts on the span of $\{ e_1,\dots,e_d \}$ by a scalar multiple. Hence $(\bbC e_1+\cdots+\bbC e_d)g$ is contained in $E_\chi(\bbC)$ for some $\chi\in\{\chi_1,\dots, \chi_m \}$, where $E_\chi$ is defined as in \eqref{eq:eigenspace decomposition}. Since $\dim E_\chi = r$, we have $d\leq r$. We note that $(\bbC e_1+\cdots+\bbC e_d)g$ can be identified with the $\bbC$-span of the first $d$ rows of $g$ 
	
	We claim that $\chi$ is defined over $\bbR$. Indeed, suppose $\chi$ is not defined over $\bbR$, then $\chi$ and its complex conjugate $\overline{\chi}$ are distinct. It follows that $$E_\chi(\bbC)\cap \bbR^n\subset E_\chi(\bbC)\cap \overline{E_{\chi}}(\bbC) = E_\chi(\bbC)\cap E_{\overline{\chi}}(\bbC) = \{0\},$$ which contradicts the fact that $E_\chi(\bbC)\cap \bbR^n$ contains the $\bbR$-span of the first $d$ rows of $g$.
	
	By relabelling $\chi_1, \dots, \chi_m$ we may assume $\chi=\chi_1$ and $\chi$ is defined over $\bbK$. Hence, $\bbK$ is a subfield of $\bbR$.
	
	Now pick $f_{d+1},\dots,f_r\in\bbR^n$ such that $\{e_1,\dots,e_d,f_{d+1},\dots,f_r\}$ form a basis of $E_\chi(\bbR)g^{-1}$. Take any $g'\in G$ such that $e_ig'=e_i$ and $e_jg'=f_j$ for $1\leq i\leq d$ and $d+1\leq j\leq r$. It follows that $g'\in P_d$ and
	\[
 g'gZ(F)g^{-1}g'^{-1}\supset \begin{pmatrix}
	tI_r& \\ &s I_{n-r}
	\end{pmatrix}\cap G.
 \]
 Taking centralizers in $G$, we get
	$$g'gFg^{-1}g'^{-1}\subset \begin{pmatrix}
	\GL_r & \\
	& \GL_{n-r}
	\end{pmatrix}\cap G\subset P_r.$$ The linear span of the first $r$ rows of $g'g$ is $E_\chi$, and thus is defined over $\bbK\subset\bbR$. In other words, we have $[g'g]\in(\bfP_r\backslash\bfG)(\bbK)=\bfP_r(\bbK)\backslash\bfG(\bbK)$ (see e.g. \cite[Prop. 20.5]{Bor91}). Hence by \eqref{eq:gM_in_PdHg_PdgF},
	\begin{equation}
	\gM\in P_dgF=P_dg'gF=P_dg'gFg^{-1}g'^{-1}g'g\subset P_dP_r\bfG(\bbK).
	\end{equation}
	In other words, (\ref{itm:focusing-2}) of \Cref{prop:consequence of linear focusing} holds.
	
	We now summarize the above discussion in the following lemma.
	
	\begin{lem}\label{lem:linear focusing case 2}
		Under the assumptions of \Cref{prop:consequence of linear focusing}, suppose that $\bfG v_0$ is Zariski closed and $\gamma_iv_0=v_0$ for all $i$. Further suppose that (\ref{itm:focusing-1}) of \Cref{prop:consequence of linear focusing} does not hold; that is,
		\[
		\inf_{\gamma\in\Gamma}\sup_{\omega\in\Omega}\norm{g_t\omega \gM\gamma e_1}\to\infty\text{ as }t\to\infty.
		\]
		Then there exist integers $r\geq d$, $m\geq 2$, and a number field $\bbK\subset\bbR$ such that $[\bbK:\bbQ]=m$, $n=mr$, and
		$
		\gM\in P_dP_r\bfG(\bbK)
		$; that is, (\ref{itm:focusing-2}) of \Cref{prop:consequence of linear focusing} holds.
	\end{lem}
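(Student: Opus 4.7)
The statement consolidates the analysis carried out in this subsection into a single clean conclusion, so the plan is to assemble that analysis into a three-stage argument: a dynamical extraction producing a limit vector with large stabilizer, a structural classification of that stabilizer, and a descent to rational points of a parabolic defined over the desired number field.

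First I would use the boundedness \eqref{eq:linear_focusing_c_tb_t_case2} together with \Cref{cor:expansion} applied to $H_d$ to bound $\|b_{t_i}\gM v_0\|$ uniformly, and use that $\bfG v_0$ is Zariski closed (hence $Gv_0$ is closed in the Hausdorff topology) to extract a subsequential limit $v_\infty = gv_0 \in Gv_0$. The limit is automatically $b_t$-fixed, and \Cref{lem:v_infty fixed by H_d} makes it $H_d$-fixed, so it is fixed by $M_d = \langle H_d,b_t\rangle$. Since $\gM v_0 = \gM g^{-1}v_\infty \in V_{\leq 0}(b)$, \Cref{lem:local_characterization_of_moving_into_weakly_stable} then yields $\gM \in P_d gF$, where $F = g^{-1}G_{v_\infty}g$ contains $g^{-1}M_d g$.

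Next I would analyse the stabilizer $\bfF = \bfG_{v_0}$. Its conjugate $gFg^{-1}$ contains $\{g_t\}$, which rules out $\bfF$ being a $\bbQ$-form of $\Sp_n$ (symplectic eigenvalues come in pairs $t,t^{-1}$, incompatible with the shape of $g_t$); therefore $\bfF$ sits in some proper $\Qbar$-parabolic. A Chevalley-type argument, together with the observation that closedness of $Gv_0$ makes the orbit map $G/F\to V$ proper, then lets me replace $\bfF$ by $Z_\bfG(\bfT)$ for a maximal $\bbQ$-torus $\bfT$ of $Z_\bfG(\bfF)$, so I may assume $\bfF$ is a $\Qbar$-Levi of some block shape $(n_1,\ldots,n_m)$. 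The hypothesis that (1) fails becomes essential at this point: if $Z(\bfF)$ were not $\bbQ$-anisotropic, \Cref{lem:Z_G(F) is Q-anisotropic} would provide a rational unstable vector $w_0$ with $\sup_\omega\|g_t\omega\gM w_0\|\to 0$, and \Cref{prop:notclosed} would then produce exactly the excluded conclusion (1). So $Z(\bfF)$ is $\bbQ$-anisotropic, and transitivity of the Galois action on its weight characters $\{\chi_1,\ldots,\chi_m\}$ (forced by anisotropy, since any proper $\cG$-invariant subset would yield an extra rational relation) makes all $n_i$ equal to a common value $r$, giving $n=rm$.

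Finally I would pin down $\bfF$ up to $\GL_n(\bbQ)$-conjugacy via Galois cohomology, showing it is $\bbQ$-isomorphic to the restriction-of-scalars form $l_0\bfHrm l_0^{-1}$ with $\bbK$ the fixed field of the stabilizer of $\chi_1$. The key inputs are Hilbert~90 to kill $H^1(\cG,\GL_n(\bbL))$, Shapiro's lemma together with inflation--restriction to reduce the obstruction to $H^1(\cG,\Wrm)$, and an identification of Galois actions: both the cocycle $\sigma\mapsto l^{-1}\sigma(l)$ and the cocycle associated to the explicit $l_0$ from \eqref{eq:l_0} induce, up to conjugation, the natural Galois action on $\{\chi_i\}$ and on the embeddings $\{\sigma_i\colon\bbK\hookrightarrow\bbC\}$ respectively. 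This gives $g_\bbQ\in\GL_n(\bbQ)$ with $\bfF = g_\bbQ l_0\bfHrm l_0^{-1}g_\bbQ^{-1}$. The containment $M_d\subset gFg^{-1}$ then forces the $\bbR$-span of the first $d$ rows of $g$ into a single weight-eigenspace $V_{\chi_i}$, giving $d\leq r$ and, since $V_{\chi_i}(\bbR)\neq 0$, forcing $\chi_i$ to be defined over $\bbR$; after relabeling, $\chi_1$ is defined over $\bbK\subset\bbR$. Moving the span of the first $d$ rows of $g$ inside this $\bbK$-rational $r$-dimensional subspace by a suitable element $g'\in P_d$ places $[g'g]\in(\bfP_r\backslash\bfG)(\bbK)$, which combined with $\gM\in P_d gF$ yields $\gM\in P_d P_r\bfG(\bbK)$. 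I expect the Galois-cohomology identification---comparing two cocycles in $H^1(\cG,\Wrm)$ via their induced permutation actions on $\{\chi_i\}$ and $\{\sigma_i\}$---to be the main technical hurdle.
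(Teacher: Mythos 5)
Your proposal reproduces the paper's proof essentially step for step: the $b_{t_i}$-boundedness and extraction of the $M_d$-fixed limit $v_\infty=gv_0$ via \Cref{cor:expansion} and \Cref{lem:v_infty fixed by H_d}, the transporter argument via \Cref{lem:local_characterization_of_moving_into_weakly_stable} giving $\gM\in P_dgF$, the reduction to a $\Qbar$-Levi through the Chevalley argument, the use of the failure of conclusion (1) together with \Cref{lem:Z_G(F) is Q-anisotropic} to force $\bbQ$-anisotropy of $Z(\bfF)$ and hence equal block sizes, the Galois-cohomological identification of $\bfF$ with the $\Res_{\bbK/\bbQ}$ form, and the final geometric argument pinning the first $d$ rows of $g$ into a $\bbK$-rational eigenspace to land $[g'g]$ in $(\bfP_r\backslash\bfG)(\bbK)$. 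This matches the paper's route in Subsection~5.2 and the proofs of \Cref{lem:block_sizes_are_equal} and \Cref{lem:classification of F}, with no substantive deviation.
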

	
	We also prove a lemma in the converse direction. Although it is not used in the proof of \Cref{prop:consequence of linear focusing}, it will be used in the proof of the converse of \Cref{prop:consequence of linear focusing} later. For $1\leq i\leq n-1$, 
 let
	\begin{equation}
	Q_i = 
	\begin{pmatrix}
	I_i &  \\
	\mathrm{Mat}_{(n-i), i} & \SL_{n-i}
	\end{pmatrix}.
	\end{equation}
	
	Let $L_i$, $\gM$, $\LM$ be as at the beginning of the section, and recall that $d=\dim\LM+1$.
	
	\begin{lem}\label{lem:K-subspace get stuck}
		Suppose that there exist integers $r\geq d$, $m\geq 2$, and a number field $\bbK\subset\bbR$ such that $[\bbK:\bbQ]=m$, $n=mr$, and
		$\gM\in P_dP_rg_\bbK$ for some $g_\bbK\in\bfG(\bbK)$. Then, at least one of the following holds:
		\begin{enumerate}
			\item There exist a $\bbQ$-subgroup $\bfF\cong_\bbQ\Res_{\bbK/\bbQ}^{(1)}\GL_r$ of $\bfG$ and an element $g_0\in G$ such that $M_r\subset g_0Fg_0^{-1}\subset L_r$ and $g_\bbK\in Q_rg_0$.
			\item $\LM$ is contained in a proper linear subspace of $\bbP^{n-1}(\bbR)$ which is defined over $\bbQ$.
		\end{enumerate}
	\end{lem}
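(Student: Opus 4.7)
The plan is to exploit the $\bbK$-rational structure carried by $g_\bbK$, splitting into cases according to whether the Galois-conjugates of a natural $\bbK$-subspace span $\bbC^n$. Let $V_\bbK \subset \bbK^n$ denote the $\bbK$-span of the first $r$ rows of $g_\bbK$, and for the $m$ embeddings $\sigma_i : \bbK \hookrightarrow \bbC$ (with $\sigma_1$ the given real embedding) set $V_i := V_\bbK \otimes_{\bbK, \sigma_i} \bbC$.

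First I would verify that the lower-block-triangular shapes of $P_d$ and $P_r$ force the $d$-dimensional linear subspace $\widetilde{\LM} \subset \bbR^n$ underlying $\LM$ (the row-span of the first $d$ rows of $\gM$) to lie inside $V_\bbK \otimes_\bbK \bbR$. Indeed, writing $\gM = p_d p_r g_\bbK$, the first $d$ rows of the product $p_d p_r$ are supported on the first $r$ column positions, so the first $d$ rows of $\gM$ are $\bbR$-linear combinations of the first $r$ rows of $g_\bbK$. Since $\mathrm{Gal}(\Qbar/\bbQ)$ permutes the $V_i$, the sum $\sum_i V_i$ descends to a $\bbQ$-subspace of $\bbQ^n$. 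If this sum is a proper subspace of $\bbC^n$, then $\widetilde{\LM} \subset V_1 \cap \bbR^n$ sits in a proper $\bbQ$-rational subspace, and conclusion (2) follows.

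Otherwise, dimension counting ($mr = n$) forces a direct sum decomposition $V_1 \oplus \cdots \oplus V_m = \bbC^n$. In this case I would define $\bfF := \{g \in \bfG : gV_i = V_i \text{ for each } i\}$. Galois-invariance of the collection $\{V_i\}$ makes $\bfF$ a $\bbQ$-subgroup, and the identification $\bfF_\bbC \cong \{(g_i) \in \prod \GL(V_i) : \prod_i \det g_i = 1\}$ together with the matching of the Galois action on $\{V_1, \ldots, V_m\}$ with the canonical action on $\Hom(\bbK, \Qbar)$ yields $\bfF \cong \Res_{\bbK/\bbQ}^{(1)}\GL_r$ as $\bbQ$-groups (the same Galois descent as in \Cref{lem:classification of F}). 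Since $V_1$ is the complexification of a real subspace and complex conjugation permutes $\{V_2, \ldots, V_m\}$ into itself, the complex subspace $V_2 \oplus \cdots \oplus V_m$ descends to a real subspace $W' \subset \bbR^n$ with $\bbR^n = V_1 \oplus W'$. I then take $g_0 \in \SL_n(\bbR)$ whose first $r$ rows agree with those of $g_\bbK$ and whose last $n-r$ rows form a real basis of $W'$ (rescaled so that $\det g_0 = 1$). The condition $g_\bbK \in Q_r g_0$ is immediate from the agreement of the top $r$ rows, and the fact that $g_0$ sends $E_r$ to $V_1$ and $E_{-r}$ to $W'$ under the right action yields $g_0 \bfF g_0^{-1} \subset L_r$. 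For $M_r \subset g_0 \bfF g_0^{-1}$, for each $A \in \GL_r(\bbR)$ and $t \in \bbR$ with $\det(A)\,t^{(m-1)r} = 1$, the tuple $(A, tI_r, \ldots, tI_r)$ is a real point of $\bfF$ (complex conjugation preserves the scalar blocks $tI_r$ while permuting the indices $\geq 2$), and its conjugate by $g_0$ is exactly the corresponding element $\diag(A', tI_{n-r}) \in M_r$.

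The main obstacle will be cleanly identifying $\bfF$ with $\Res_{\bbK/\bbQ}^{(1)} \GL_r$ as $\bbQ$-groups, which requires matching the Galois permutation action on $\{V_1, \ldots, V_m\}$ with the canonical one on $\Hom(\bbK, \Qbar)$, essentially by a cocycle computation parallel to \Cref{lem:classification of F}. Once that is in place, the construction of $g_0$ and the verification of the subgroup inclusions together with the $Q_r$-coset condition reduce to routine linear algebra.
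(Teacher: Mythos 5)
Your proposal is correct and follows essentially the same strategy as the paper. The case split you make---whether $\sum_i V_i$ is a proper $\bbQ$-rational subspace of $\bbC^n$---is equivalent to the paper's condition of whether the $n$ columns of the top $r\times n$ block $[g_\bbK]$ are $\bbQ$-linearly dependent: a rational linear form vanishing on $V_1$ is precisely a rational dependence among those columns, and applying each embedding $\sigma_i$ shows it kills every $V_i$. The construction of $g_0$ (first $r$ rows from $g_\bbK$, remaining rows a real basis of the complement $W'$) and the observation that $g_\bbK\in Q_r g_0$ likewise match the paper's argument. The one place where your route is slightly more laborious is the identification $\bfF\cong\Res_{\bbK/\bbQ}^{(1)}\GL_r$: you define $\bfF$ as the stabilizer of the Galois-stable family $\{V_i\}$ and flag a cocycle computation as needed, whereas the paper sidesteps this by writing $[g_\bbK]=Bg_\bbQ$ with $g_\bbQ\in\GL_n(\bbQ)$ and $B$ the top $r$ rows of the explicit $l_0^{-1}$, so that $\bfF=g_\bbQ^{-1}l_0\bfHrm l_0^{-1}g_\bbQ$ is a $\GL_n(\bbQ)$-conjugate of the group already identified in \Cref{rmk:defined over Q}; the $\bbQ$-isomorphism type is then immediate. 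The paper also obtains the sandwich $M_r\subset g_0Fg_0^{-1}\subset L_r$ by taking centralizers of the verified chain $Z(L_r)\subset g_0Z(F)g_0^{-1}\subset M_{-r}$, a small shortcut compared to exhibiting the element $(A,tI_r,\dots,tI_r)$ directly, but both verifications are routine.
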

	
	\begin{proof}
		We identify $P_r\backslash G$ with $\Gr(r,n)$ by mapping $P_rg$ to the span of the first $r$ rows of $g$. As before we choose and fix a basis $\{x_1, \dots, x_m \}$ of $\bbK$ as a $\bbQ$-vector space. Consider the $r\times n$ matrix
		\[
		B=\begin{pmatrix}
		x_1I_r & \cdots & x_mI_r
		\end{pmatrix}.
		\]
		Its columns give a $\bbQ$-basis of $\bbK^n$. Let $[g_\bbK]$ be the top $r\times n$ block of $g_\bbK$. We discuss the following two possibilities.
		
		Suppose first that the columns of $[g_\bbK]$ are $\bbQ$-linearly dependent, then $\cL$ is contained in a proper linear subspace of $\bbP^{n-1}(\bbR)$ defined over $\bbQ$, where $\cL$ denotes the $(r-1)$-dimensional linear subspace parametrized by $P_rg_\bbK$. Recall that $\LM$ is the $(d-1)$-dimensional linear subspace of $\bbP^{n-1}(\bbR)$ parametrized by $P_d\gM$. Since $\gM\in P_dP_rg_\bbK$, we have $\LM\subset\cL$. Hence (2) of the lemma holds.
		
		Now suppose that the columns of $[g_\bbK]$ are linearly independent over $\bbQ$. Then there exists $g_\bbQ\in\GL_n(\bbQ)$ such that $[g_\bbK]=Bg_\bbQ$. Recall that after possibly relabelling the $\chi_i$'s, we have assumed that $\chi_1$ is defined over $\bbK\subset\bbR$, and $\sigma_1$ is the inclusion of $\bbK$ into $\bbR$. Therefore $B$ consists of the first $r$ row vectors of $l_0^{-1}$, where $l_0$ is as in \eqref{eq:l_0}. Hence we have $P_rg_\bbK=P_rl_0^{-1}g_\bbQ$. Now take $\bfF=g_\bbQ^{-1}l_0\bfHrm l_0^{-1}g_\bbQ$, and by \Cref{rmk:defined over Q} we know that $l_0\bfHrm l_0^{-1}$ is defined over $\bbQ$. Hence $\bfF$ is defined over $\bbQ$. We again consider the eigenspaces of $Z(F)$ in the standard representation. Since $E_{\chi_1}$ is defined over $\bbK\subset\bbR$, $W:=\bigoplus_{i=2}^mE_{\chi_i}$ is also defined over $\bbR$. Let $g_0\in G$ be the matrix such that the block of the first $r$ rows is $[g_\bbK]$, and the remaining $(n-r)$ rows form a basis of $W$. Then $g_\bbK\in Q_rg_0$. One can also verify that $Z(L_r)\subset g_0Z(F)g_0^{-1}\subset M_{-r}$. By taking centralizers, we see that (1) of the lemma holds.
	\end{proof}
	
	\begin{cor}\label{cor:K-subspace get stuck}
		Under the assumptions of \Cref{lem:K-subspace get stuck}, suppose (1) of \Cref{lem:K-subspace get stuck} holds. Then, there exists a compact subset $\Sigma$ of $G$ such that $g_t\Omega \gM$ is contained in $\Sigma F$ for all $t\geq 0$, where $\Omega$ is as in \eqref{eq:Omega}.
	\end{cor}
	
	\begin{proof}
		By assumption, we write $\gM=p_dp_rg_\bbK$ for some $p_d\in P_d$ and $p_r\in P_r$. Observe that $P_1P_dP_r=Q_1M_r$. Since $\Omega$ is a compact subset of $P_1P_d$, there exist compact sets $\Omega_1\subset Q_1$ and $\Omega_2\subset M_r$ such that $\Omega p_dp_r\subset \Omega_1\Omega_2$. Hence $\Omega \gM=\Omega p_dp_rg_\bbK\subset \Omega_1\Omega_2g_\bbK$. Since $\Omega_1\subset Q_1\subset P_1$ is compact, $g_t\Omega_1g_t^{-1}$ is contained in a fixed compact set for all $t\geq0$. Hence, it suffices to show that $g_t\Omega_2g_\bbK F/F$ is contained in a fixed compact set of $G/F$ for all $t\geq0$. By our assumption, we can write $g_\bbK=q_rg_0$ for some $q_r\in Q_r$. Hence, there exists a compact set $\Omega_3\subset M_r$ and $u_r\in U_r$ such that $g_t\Omega_2g_\bbK=g_t\Omega_2q_rg_0=g_tu_r\Omega_3g_0$. Since $u_r\in U_r\subset P_1$, we have $g_tu_rg_t^{-1}\to e$ as $t\to\infty$. Hence, it suffices to show that $g_t\Omega_3g_0F/F$ is contained in a fixed compact set of $G/F$ for all $t\geq0$. Since $g_t\in M_r$, we have $g_t\Omega_3\subset M_r\subset g_0Fg_0^{-1}$. Hence, $g_t\Omega_3g_0F/F=g_0F/F$ is a single point in $G/F$, and the proof is finished.
	\end{proof}
	
	\subsection{Case (3)} $\bfG v_0$ is Zariski closed, and $\gamma_i v_0 \to \infty$ as $i\to\infty$.
	
	As before, let $\bfF=\bfG_{v_0}$. Since $\gamma_iv_0\to\infty$, $v_0$ is not fixed by $\bfG$, and $\bfF$ is a proper subgroup of $\bfG$. By \Cref{lem:v_infty fixed by H_d}, $d<n$ and we know that a conjugate of $\bfF(\bbC)$ contains $H_d$, where $\bfF(\bbC)$ denotes the $\bbC$ points of $\bfF$. Hence, this enables us to use \Cref{thm:classification_intermediate_subgroups} to classify $\bfF_\bbC$, which is obtained from $\bfF$ by extension of scalars to $\bbC$.
	
	We first show that if $\bfF_\bbC$ is contained in a proper parabolic subgroup of $\bfG_\bbC$, then one can reduce this case to Case~(1) as in \Cref{subsect:case 1} or Case~(2) as in \Cref{subsect:case 2}.
	
	\begin{lem}\label{lem:reducing to adjoint representation}
		Let $\rho:\bfG\to\GL(V)$ be a representation of $\bfG$, and $v_0\in V$ such that $\bfG v_0$ is Zariski closed. We assume that \eqref{eq:linear_focusing_c_tb_t} holds for $(\rho, V, v_0)$, i.e. 
		\begin{equation}\label{eq:temp6}
		\sup_{\omega\in\Omega}\norm{c_{t_i}\omega b_{t_i}\gM\gamma_i v_0}\leq C, \; \forall i.
		\end{equation}
		Let $\bfF=\bfG_{v_0}$. Suppose that $\bfF_\bbC$ is contained in a proper parabolic subgroup of $\bfG_\bbC$, then there exists a nonzero $w_0\in\fg_\bbQ$ such that \eqref{eq:linear_focusing_c_tb_t} holds for $(\Ad, \fg, w_0)$, i.e.
		\begin{equation}\label{eq:temp7}
		\sup_{\omega\in\Omega}\norm{c_{t_i}\omega b_{t_i}\gM\gamma_i w_0}\leq C, \; \forall i.
		\end{equation}
	\end{lem}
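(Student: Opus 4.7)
The plan is to leverage the closedness of $\bfG v_0$ to upgrade the assumed bound \eqref{eq:temp6} in $V$ to boundedness in the homogeneous space $G/F$, and then push this forward via the orbit map $gF \mapsto \Ad(g)w_0$ for a carefully chosen non-zero $w_0 \in \fg_\bbQ$ fixed by $\bfF$ under the adjoint action. The parabolic hypothesis on $\bfF_\bbC$ is precisely what produces such a $w_0$.

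First I would use that $\bfG v_0$ is Zariski closed to conclude that $Gv_0$ is closed in $V$ in the Hausdorff topology (cf.~\cite[Section 3.2]{PR94}), so the orbit map $G/F \to V$, $gF \mapsto gv_0$, is proper. Hence \eqref{eq:temp6} forces the set
\[
\{c_{t_i}\omega b_{t_i}\gM\gamma_i F : \omega \in \Omega,\; i \in \bbN\}
\]
to lie in a fixed compact subset of $G/F$.

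Next I would construct $w_0$. By Matsushima's criterion, the closedness of $\bfG v_0$ forces $\bfF = \bfG_{v_0}$ to be reductive. Since $\bfF_\bbC$ is reductive and contained in a proper parabolic $\bfP_\bbC$ of $\bfG_\bbC$, standard Borel--Tits structure theory lets us conjugate $\bfF_\bbC$ by an element of the unipotent radical of $\bfP_\bbC$ into a Levi factor $\bfL_\bbC$ of $\bfP_\bbC$; since $\bfP_\bbC$ is proper, the centre $Z(\bfL_\bbC)$ is a non-trivial torus and it centralizes the conjugate of $\bfF_\bbC$. Therefore $Z_\bfG(\bfF)$, which is automatically a $\bbQ$-subgroup as $\bfF$ is defined over $\bbQ$, has positive dimension, and so $\Lie Z_\bfG(\bfF)$ contains a non-zero element $w_0 \in \fg_\bbQ$ with $\Ad(f)w_0 = w_0$ for every $f \in F$.

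Finally, because $F$ fixes $w_0$ under $\Ad$, the map $\Psi\colon G/F \to \fg$, $gF \mapsto \Ad(g)w_0$, is well-defined and continuous. Applying $\Psi$ to the compact set from the first step produces a norm-bounded subset of $\fg$, which is exactly \eqref{eq:temp7} (with $C$ enlarged, if necessary, by the operator norm of $\Psi$ on that compact set). The step I expect to require the most care is the construction of $w_0$: one has to combine Matsushima's reductivity with the Borel--Tits conjugation-into-Levi statement, and then verify that the non-trivial centralizer over $\bbC$ descends to produce a \emph{rational} non-zero vector in $\fg$, which follows from the fact that a positive-dimensional $\bbQ$-algebraic group has a non-trivial $\bbQ$-rational Lie algebra.
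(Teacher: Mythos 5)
Your proof is correct and follows essentially the same route as the paper: use closedness of $\bfG v_0$ to pass from the bound in $V$ to compactness in $G/F$, produce a non-zero $w_0\in\Lie Z_\bfG(\bfF)(\bbQ)$ from the parabolic hypothesis, and push forward through the continuous orbit map $gF\mapsto\Ad(g)w_0$. The only difference is that you spell out the step the paper states tersely (``$Z_\bfG(\bfF)_\bbC$ contains a non-trivial multiplicative one-parameter subgroup''): you invoke Matsushima's criterion to get that $\bfF$ is reductive and then Borel--Tits conjugation into a Levi to exhibit the non-trivial centralizing torus, which is exactly what is needed and is a welcome elaboration.
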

	
	\begin{proof}
		Suppose that $\bfF_\bbC$ is contained in a proper parabolic subgroup of $\bfG_\bbC$. Since $\bfF$ is reductive, it is contained in a Levi subgroup of that parabolic, and thus $Z_\bfG(\bfF)_\bbC$ contains a non-trivial multiplicative one-parameter subgroup. Since $v_0\in V(\bbQ)$, $\bfF$ is defined over $\bbQ$, and hence  $Z_{\bfG}(\bfF)$ is a nontrivial reductive subgroup of $\bfG$ defined over $\bbQ$. It follows that $\Lie(Z_\bfG(\bfF))$ is a non-trivial Lie algebra over $\bbQ$. Hence we may pick a nonzero element $w_0\in \Lie(Z_\bfG(\bfF))(\bbQ)$. Since $\bfF$ stabilizes $w_0$, we have a continuous $G$-equivariant map $G/F\to \fg$ which sends $gF$ to $gw_0$. In particular, it sends compact sets to compact sets. Hence \eqref{eq:temp6} implies \eqref{eq:temp7}.
	\end{proof}
	
	\begin{lem}\label{lem:adjoint representation is O(1)}
		Consider the adjoint representation $\Ad:\bfG\to\GL(\fg)$, and suppose that there exists a nonzero $w_0\in\fg$ such that \eqref{eq:temp7} holds. Then the set $\{\gamma_iw_0 \}$ is bounded.
	\end{lem}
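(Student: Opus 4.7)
The plan is to reduce the hypothesis on $c_{t_i}\omega b_{t_i}\gM\gamma_i w_0$ to a bound on $\gamma_i w_0$ itself via two successive applications of expansion. First, I apply \Cref{cor:expansion} to the subgroup $H_d\cong\SL_d$ (the upper-left $d\times d$ block of $\SL_n$) acting on $\fg$ by restriction of the adjoint representation, with $c_t$ playing the role of the diagonal element $g_t$ of $\SL_d$. The image of $\Omega$ under the natural projection to $\bbP^{d-1}$ (the quotient by the parabolic $P_1$ of $H_d$) coincides with $\cM\cdot\gM^{-1}$, and this is not contained in a proper subspace of $\bbP^{d-1}$ because $\LM$ is by construction the minimal affine span of $\cM$. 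Thus the corollary yields a constant $D>0$ such that $\sup_{\omega\in\Omega}\norm{c_t\omega v}\geq D\norm{v}$ for every $v\in\fg$ and $t\geq 0$; specializing $v=b_{t_i}\gM\gamma_i w_0$ and combining with \eqref{eq:temp7} gives $\norm{b_{t_i}\gM\gamma_i w_0}\leq C/D$.

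Next, after passing to a subsequence we may assume $b_{t_i}\gM\gamma_i w_0\to w_\infty$ for some $w_\infty\in\fg$. I claim $w_\infty$ lies in the $H_d$-fixed subspace $\fg^{H_d}=\Lie(M_{-d})$, by repeating the argument from the proof of \Cref{lem:v_infty fixed by H_d}: decompose $\fg=\fg^{H_d}\oplus(\fg^{H_d})^\perp$, observe that $(\fg^{H_d})^\perp$ carries no nonzero $H_d$-fixed vector, and invoke \Cref{lem:quantitative_Shah_basic_lemma} to conclude that any nonzero projection of $w_\infty$ to $(\fg^{H_d})^\perp$ would force $\sup_{\omega}\norm{c_{t_i}\omega b_{t_i}\gM\gamma_i w_0}$ to grow exponentially, contradicting \eqref{eq:temp7}. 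Since $M_{-d}$ is centralized by $b_t$, the limit $w_\infty$ is also $b_t$-fixed.

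The final and most delicate step is to upgrade the bound on $b_{t_i}\gM\gamma_i w_0$ to a bound on $\gM\gamma_i w_0$ itself, which is equivalent (up to $\norm{\gM^{\pm 1}}$) to bounding $\gamma_i w_0$. Writing $\gM\gamma_i w_0=w_\infty+b_{t_i}^{-1}\epsilon_i$ with $\epsilon_i:=b_{t_i}\gM\gamma_i w_0-w_\infty\to 0$, and decomposing $\epsilon_i$ along the $b_t$-weight spaces of $\fg$, the zero and positive weight pieces contribute bounded terms under $b_{t_i}^{-1}$; however, the negative weight piece lying in $\Lie(U_d)$ is dilated by $e^{nt_i/d}$, and this is the main obstacle. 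I plan to resolve it by splitting into two cases according to whether $\bfG\cdot w_0$ is Zariski closed in $\fg$. If it is not, \Cref{prop:notclosed} applied to $w_0$ with the compact set $\Omega\gM$ (whose Zariski closure remains irreducible) produces a primitive integer vector whose bound already yields the desired conclusion through arguments parallel to those of the preceding subsections. If $\bfG\cdot w_0$ is Zariski closed, then $Gw_0$ is Hausdorff closed in $\fg$ and $w_\infty\in Gw_0$, so writing $w_\infty=\Ad(h)(w_0)$ gives $h^{-1}b_{t_i}\gM\gamma_i=\rho_i z_i$ with $\rho_i\to e$ and $z_i\in G_{w_0}$; combining the rationality of $w_0$ (which forces $\Gamma w_0$ to be discrete in $\fg$) with the $b_t$-fixedness of $w_\infty$ constrains the $\Lie(U_d)$-component of $\gM\gamma_i w_0$ to remain bounded, completing the argument.
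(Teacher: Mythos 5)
Your Steps 1--2 are correct but they only bound $b_{t_i}\gM\gamma_i w_0$, not $\gM\gamma_i w_0$, and this is strictly weaker than what the lemma asserts: boundedness of $b_{t_i}\gM\gamma_i w_0$ together with $b_t$-fixedness of the limit $w_\infty$ does \emph{not} force $\gamma_i w_0$ to be bounded, since $b_{t_i}^{-1}$ can dilate the $\Lie(U_d)$-component of the error $\epsilon_i$ by $e^{\frac{n}{d}t_i}$ faster than $\epsilon_i$ decays. Your Step 3 is where the argument breaks. In the non-Zariski-closed subcase, invoking \Cref{prop:notclosed} produces a bounded sequence $\{g_{t_i'}\omega v_i\}$ with $v_i\in\bbZ^n$, which is outcome (1) of \Cref{prop:consequence of linear focusing}; that is \emph{not} the statement of \Cref{lem:adjoint representation is O(1)}, which asserts unconditionally that $\{\gamma_i w_0\}$ is bounded (so that Case (3) can be folded back into Case (2) and the proof of the proposition can proceed). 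In the Zariski-closed subcase, the claim that ``rationality of $w_0$ \ldots constrains the $\Lie(U_d)$-component of $\gM\gamma_i w_0$ to remain bounded'' is unjustified: discreteness of $\Gamma w_0$ in $\fg$ does not imply boundedness, and writing $h^{-1}b_{t_i}\gM\gamma_i = \rho_i z_i$ with $\rho_i\to e$ still gives $\gM\gamma_i w_0 = w_\infty + b_{t_i}^{-1}\delta_i$ where $\delta_i\to 0$ can be swamped by the expansion of $b_{t_i}^{-1}$. In fact, in low-rank examples one can take $\gamma_i$ whose $\Ad$-action moves $w_0$ arbitrarily far into $\Lie(U_d)$ while $b_{t_i}$ contracts it back to a bounded set.

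The paper closes the gap in a completely different way, by exploiting a weight-space inclusion that is special to the adjoint representation. Writing $\fg = \bigoplus_{i\neq j}\fg_{ij}\oplus\fh$, one has $\fg_{<0}(g_t) = \bigoplus_{2\leq i\leq n}\fg_{i1}$, and each such $\fg_{i1}$ also carries a strictly negative $c_t$-weight; hence $\fg_{<0}(g_t)\subset\fg_{<0}(c_t)$. Combining this with \Cref{lem:Shah_basic_lemma} applied to $H_d$ (which shows $\Omega v\not\subset\fg_{<0}(c_t)$ for every $v\neq 0$) gives $\Omega v\not\subset\fg_{<0}(g_t)$, so the $\fg_{\geq 0}(g_t)$-projection of $\Omega v$ is nonzero, and by compactness of $\bbP(\fg)$ one obtains a uniform $D>0$ with $\sup_{\omega\in\Omega}\norm{g_t\omega v}\geq D\norm{v}$ for \emph{all} $t\geq 0$. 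Since $c_{t_i}\omega b_{t_i} = g_{t_i}\omega$ for $\omega\in\Omega$ (as $\Omega$ can be taken inside $U^+\cap L_d$, which commutes with $b_t$), the hypothesis \eqref{eq:temp7} directly yields $\norm{\gM\gamma_i w_0}\leq C/D$ with no $b_{t_i}$ to invert. This is the idea your proposal is missing: you correctly applied expansion with $c_t$, but only the expansion with $g_t$ itself --- made available here by the adjoint-representation inclusion --- avoids the back-translation by $b_{t_i}^{-1}$ entirely.
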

	
	\begin{proof}
        Recall from the beginning of the section that $P_1\Omega$ is not contained in any proper linear subspace of $P_1\backslash P_1L_d\cong (P_1\cap H_d)\backslash H_d$. Since $\Omega$ is analytic and connected, we deduce that $P_1\Omega$ is not contained in a union of finitely many proper linear subspaces of $(P_1\cap H_d)\backslash H_d$.
		Applying \Cref{lem:Shah_basic_lemma} to $H_d$, we know that for any $0\neq v\in\fg$, the set $\Omega v$ cannot be contained in $\fg_{<0}(c_t):=\{X\in \fg:\Ad(c_t)(X)\to 0 \text{ as } t\to\infty\}$. By \eqref{eq:btct}, 
		\begin{equation*}\label{eq:V^-(g_t) in V^-(c_t)}
		\fg_{<0}(g_t)=\oplus_{i=2}^n \fg_{ij}\subset \oplus_{i=2}^d \fg_{ij}=\fg_{<0}(c_t),
		\end{equation*}
		where $\fg_{ij}$ is the 1-dimensional linear subspace spanned by the elementary matrix whose only nonzero entry lies in the $i$-th row and $j$-th column. Therefor $\Omega v\not\subset \fg_{<0}(g_t)$ for any nonzero $v$. Let $\pi_{\geq0}$ denote the $g_t$-equivariant projection from $\fg$ to $\fg_{\geq0}(g_t)$, where $\fg_{\geq0}(g_t):=\{X\in \fg: \Ad(g_t)(X) \text{ is convergent as $t\to 0$}\}$. Then by compactness of $\bbP(\fg)$, the map $\bbR v\mapsto \sup_{\omega\in\Omega}\norm{\pi_{\geq0}(\omega v)}/\norm{v}$ has a positive minimum. Therefore, we can pick $D>0$ such that for any $v\in\fg$ and any $t\geq0$, we have
		\[
		\sup_{\omega\in\Omega}\norm{g_t\omega v}\geq D\norm{v}.
		\]
		Hence
		\[
		\begin{split}
		\sup_{\omega\in\Omega}\norm{c_{t_i}\omega b_{t_i}\gM\gamma_i w_0}=\sup_{\omega\in\Omega}\norm{g_{t_i}\omega\gM\gamma_i w_0}
		\geq D\norm{\gM\gamma_iw_0}.
		\end{split}
		\]
		So, by \eqref{eq:temp7}, $\{\gM\gamma_iw_0 \}$ is bounded. Hence $\{\gamma_iw_0 \}$ is bounded.
	\end{proof}
	
	If $\bfF_\bbC$ is contained in any proper parabolic subgroup of $\bfG_\bbC$, then by \Cref{lem:reducing to adjoint representation} and \Cref{lem:adjoint representation is O(1)}, we are reduced to the Case~(1) or Case~(2). Therefore, we assume that $\bfF_\bbC$ is not contained in any parabolic subgroup of $\bfG_\bbC$.
	
	\begin{lem}\label{lem:conjugate to Sp_n over C}
		Suppose that $\bfF_\bbC$ is not contained in any proper parabolic subgroup of $\bfG_\bbC$. Then $n$ is even, $d=2$, and there exists $l_\bbC\in\GL_n(\bbC)$ such that $\bfF_\bbC=l_\bbC \Sp_n l_\bbC^{-1}$.
	\end{lem}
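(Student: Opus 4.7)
The plan is to combine the containment of $H_d$ inside a conjugate of $\bfF_\bbC$ provided by Lemma~\ref{lem:v_infty fixed by H_d} with the classification Theorem~\ref{thm:classification_intermediate_subgroups} to pin down $\bfF_\bbC$ up to conjugation, and then to extract the numerical constraints $d=2$ and $n$ even from a short computation of $\SL_d$-invariant alternating forms on the standard representation.

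First I would reformulate the parabolic hypothesis as an irreducibility statement: since $\bfF_\bbC$ is connected reductive, being contained in no proper parabolic subgroup of $\bfG_\bbC$ is equivalent to saying that $\bfF_\bbC$ acts irreducibly on the standard representation $\bbC^n$. Lemma~\ref{lem:v_infty fixed by H_d} supplies an element of $G$ conjugating $H_d$ inside $\bfF_\bbC$, so after replacing $\bfF$ by a conjugate we may assume $H_d \subset \bfF_\bbC$ with $d \geq 2$. This is precisely the setting of Theorem~\ref{thm:classification_intermediate_subgroups}, whose conclusion leaves only two possibilities: either $\bfF_\bbC = \bfG_\bbC$, or $\bfF_\bbC$ is $\GL_n(\bbC)$-conjugate to $\Sp_n$. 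The first possibility is ruled out by the standing hypothesis of Case~(3): since $\gamma_i v_0 \to \infty$ while $\bfG v_0$ is Zariski closed, $v_0$ is not $\bfG$-fixed and hence $\bfF = \bfG_{v_0}$ is a proper subgroup of $\bfG$. Therefore $\bfF_\bbC = l_\bbC \Sp_n l_\bbC^{-1}$ for some $l_\bbC \in \GL_n(\bbC)$, and in particular $n$ is even.

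To finish, I would verify $d = 2$ by analyzing the embedding $H_d \hookrightarrow l_\bbC\Sp_n l_\bbC^{-1}$. As an $\SL_d$-module via $H_d$, the standard representation decomposes as $\bbC^n \cong V \oplus \mathbf{1}^{n-d}$, where $V$ is the $d$-dimensional standard representation. The containment forces $\SL_d$ to preserve a non-degenerate alternating form on $\bbC^n$; since $V$ and the trivial representation are non-isomorphic irreducibles of $\SL_d$, any such form must be block-diagonal with respect to the decomposition, so its restriction to $V$ must itself be non-degenerate and $\SL_d$-invariant. Equivalently, $\wedge^2 V$ must contain a trivial $\SL_d$-subrepresentation. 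But for $d \geq 3$, $\wedge^2 V$ is the irreducible fundamental representation with highest weight $\omega_2$, hence non-trivial. Therefore $d = 2$, which completes the argument.

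The main obstacle is the classification Theorem~\ref{thm:classification_intermediate_subgroups} itself, which is the structural backbone and eliminates all other possible intermediate reductive subgroups of $\SL_n$ containing $H_d$; once that classification is available, the remaining deductions -- ruling out $\bfF_\bbC = \bfG_\bbC$ and pinning down $d = 2$ -- are elementary representation-theoretic checks.
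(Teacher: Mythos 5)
Your proof is correct and follows the same route as the paper: reformulate the non-parabolic hypothesis as irreducibility of $\bfF_\bbC$ on $\bbC^n$, invoke Lemma~\ref{lem:v_infty fixed by H_d} to place $H_d$ inside a conjugate of $\bfF_\bbC$, apply Theorem~\ref{thm:classification_intermediate_subgroups}, and rule out $\bfF_\bbC = \bfG_\bbC$ from properness of $\bfF$. Your closing argument re-deriving $d=2$ via the non-existence of an $\SL_d$-invariant alternating form on the standard representation for $d\geq 3$ is correct and self-contained, though technically redundant since conclusion (2) of Theorem~\ref{thm:classification_intermediate_subgroups} already asserts $\dim W_1 = 2$.
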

	
	\begin{proof}
		It follows from our assumption that the inclusion $\bfF_\bbC\subset\bfG_\bbC=\SL_n$ gives a faithful irreducible representation of $\bfF_\bbC$ on $\bbC^n$. Furthermore, \Cref{lem:v_infty fixed by H_d} implies that a conjugate of $\bfF_\bbC$ contains $(\bfH_d)_\bbC$. Hence by \Cref{thm:classification_intermediate_subgroups} we know that $n$ is even, $d=2$, and $\bfF_\bbC=\Sp(\bbC^n,\omega)$ for some sympletic form $\omega$ on $\bbC^n$. Let $l_\bbC$ be the transformation matrix from the standard basis of $\bbC^n$ to a symplectic basis of $(\bbC^n, \omega)$. Then we have $\bfF_\bbC=l_\bbC \Sp_n l_\bbC^{-1}$.
	\end{proof}
	
	\begin{lem}\label{lem:conjugate to Sp_n over Q}
		Suppose that $\bfF_\bbC=l_\bbC \Sp_n l_\bbC^{-1}$ for some $l_\bbC\in\GL_n(\bbC)$. Then there exists $l_\bbQ\in \GL_n(\bbQ)$ such that $\bfF=l_\bbQ \Sp_n l_\bbQ^{-1}$.
	\end{lem}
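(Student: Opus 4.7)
The plan is to realize $\bfF$ as the stabilizer of a non-degenerate alternating $\bbQ$-bilinear form on $\bbQ^n$, and then invoke the classification of such forms over $\bbQ$. Set $\omega_\bbC(x,y) := \omega_0(l_\bbC^{-1}x, l_\bbC^{-1}y)$, where $\omega_0$ denotes the standard symplectic form; then $\bfF_\bbC = \Sp(\bbC^n, \omega_\bbC)$ fixes $\omega_\bbC$ exactly. The standard decomposition of $\wedge^2(\bbC^n)^*$ as an $\Sp_n$-module is $\bbC\omega_0 \oplus W$ with $W$ irreducible, so $\bfF_\bbC$ has a unique one-dimensional space of invariants $L = \bbC\omega_\bbC$ in $\wedge^2(\bbC^n)^*$.

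Next I would perform a Galois descent on the line $L$. The representation $\wedge^2(\bbC^n)^*$ is the base change of the $\bbQ$-vector space $\wedge^2(\bbQ^n)^*$, and since $\bfF$ is defined over $\bbQ$, the line $L$ — being intrinsically characterized by $\bfF_\bbC$ — is $\Gal(\bbC/\bbQ)$-stable. Hence $L$ is defined over $\bbQ$, and I may pick a nonzero $\omega \in L \cap \wedge^2(\bbQ^n)^*$. Because $\bfF_\bbC$ is semisimple, it admits no nontrivial characters, so its action on $L$ is trivial and it fixes $\omega$ exactly. Thus $\bfF_\bbC = \Sp(\bbC^n, \omega)$.

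Finally, $\omega$ is a non-degenerate alternating bilinear form on $\bbQ^n$ (skew-symmetry is automatic, and non-degeneracy follows since $\omega$ is a non-zero scalar multiple of $\omega_\bbC$). By the classification of non-degenerate alternating forms over any field of characteristic not $2$, there exists $l_\bbQ \in \GL_n(\bbQ)$ bringing $\omega$ to the standard symplectic form $\omega_0$, so that $\Sp(\bbQ^n, \omega) = l_\bbQ\,\Sp_n\,l_\bbQ^{-1}$ as $\bbQ$-subgroups of $\SL_n$. Since both $\bfF$ and $l_\bbQ\Sp_n l_\bbQ^{-1}$ are $\bbQ$-subgroups of $\SL_n$ with the same $\bbC$-points, they coincide, yielding the desired equality.

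The only nontrivial input is the uniqueness of the $\bfF_\bbC$-invariant line in $\wedge^2(\bbC^n)^*$; the Galois descent step is essentially Hilbert 90 for a $1$-dimensional subspace (hence cohomologically free), and the final classification of symplectic forms over $\bbQ$ is a standard Gram--Schmidt-type argument. No essentially new difficulty is expected, so the statement follows in a few lines once these three ingredients are assembled.
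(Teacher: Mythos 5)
Your proof is correct, and it takes a genuinely different route from the paper's. The paper argues via Galois cohomology of the normalizer: it identifies the set of $\Sp_n$-conjugates of $\bfF$ with $\bbQ$-points of $\GL_n/\GSp_n$, then uses the exact sequence $1\to\Sp_n\to\GSp_n\to\Gm\to1$ together with the vanishing $H^1(\bbQ,\Sp_n)=H^1(\bbQ,\Gm)=1$ to conclude $H^1(\bbQ,\GSp_n)=1$, so $\GL_n(\bbQ)$ acts transitively on $(\GL_n/\GSp_n)(\bbQ)$. You instead descend the invariant line in $\wedge^2 V^\ast$ to $\bbQ$ and invoke the classification of non-degenerate alternating forms over a field, which is more hands-on and arguably more elementary (the paper's $H^1(\GSp_n)=1$ statement is, morally, exactly the equivalence of all symplectic forms over $\bbQ$). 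What the paper's version buys is uniformity with the Galois-cohomological machinery used elsewhere (e.g.\ \Cref{lem:classification of F}), while your version makes the underlying bilinear-form picture transparent.

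One small point worth tightening: the justification you give for the descent (``essentially Hilbert 90 for a $1$-dimensional subspace'') is not quite the right appeal. The cleanest statement is that because $\bfF$ is a reductive $\bbQ$-subgroup and formation of invariants in a representation commutes with field extension, one has $(\wedge^2(\bbQ^n)^\ast)^{\bfF}\otimes_\bbQ\bbC=(\wedge^2(\bbC^n)^\ast)^{\bfF_\bbC}=L$, so the $\bbQ$-invariants are already one-dimensional and any nonzero element $\omega$ there works. (Equivalently: work over $\Qbar$, where $\bfF(\Qbar)$ is Zariski dense in $\bfF_\Qbar$ and the invariant line is visibly $\Gal(\Qbar/\bbQ)$-stable.) No Hilbert 90 is needed for this part; Hilbert 90 is implicitly doing the work in the final step, where the classification of alternating forms can be rephrased as $\ker\bigl(H^1(\bbQ,\Sp_n)\to H^1(\bbQ,\GL_n)\bigr)=1$.
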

	
	\begin{proof}
		Note that the normalizer of $\Sp_n$ in $\GL_n$ is $\GSp_n$. Consider the homogeneous variety $X=\GL_n/\GSp_n$ of conjugates of $\Sp_n$ in $\GL_n$. Then $\bfF_\bbC=[l_\bbC]\in X$. Since $\bfF$ is defined over $\bbQ$, we have that $\bfF_\bbC\in X(\bbQ)$. To prove the lemma, we only need to show that $\GL_n(\bbQ)$ acts transitively on $X(\bbQ)$. By \cite[Corollary 1 of Proposition 36]{SerreGaloisCohomology}, it suffices to show that the kernel of $H^1(\bbQ, \GSp_n)\to H^1(\bbQ, \GL_n)$ is trivial. 
		
		We consider following the exact sequence of algebraic groups:
		\[
		1\to\Sp_n\to\GSp_n\to \Gm\to 1.
		\]
		Since $H^1(\bbQ, \Sp_n)=H^1(\bbQ, \Gm)=1$, we also have $H^1(\bbQ, \GSp_n)=1$. Hence, we are done.
	\end{proof}
	
	By combining the discussions on all three cases, we will complete:
	
	\begin{proof}[Proof of \Cref{prop:consequence of linear focusing}]
		Firstly, suppose that $\bfG v_0$ is not Zariski closed. We apply \Cref{prop:notclosed} to conclude that there exists $t_i'\to\infty$, $C'>0$ and $v_i\in \bbZ^n\setminus\{0\}$ such that
		\[
		\sup_{\omega\in\Omega}\norm{g_{t_i'}\omega\gM v_i}\leq C', \; \forall i;
		\]
		that is, (\ref{itm:focusing-1}) of \Cref{prop:consequence of linear focusing} holds.
		
		Now suppose that $\bfG v_0$ is Zariski closed, and $\gamma_i v_0 = v_0$ for all $i$. By \Cref{lem:linear focusing case 2}, we know that either (\ref{itm:focusing-1}) or (\ref{itm:focusing-2}) of \Cref{prop:consequence of linear focusing} holds.
		
		Finally, suppose that $\bfG v_0$ is Zariski closed, and $\gamma_i v_0 \to\infty$ as $i\to\infty$. We first assume that $\bfF_\bbC$ is contained in a proper parabolic subgroup of $\bfG_\bbC$. By \Cref{lem:reducing to adjoint representation} and \Cref{lem:adjoint representation is O(1)}, we can reduce to Case (2) in \Cref{subsect:case 2} and again by \Cref{lem:linear focusing case 2} we have either (\ref{itm:focusing-1}) or (\ref{itm:focusing-2}) of \Cref{prop:consequence of linear focusing} occurs.
		
		Now we assume that $\bfF_\bbC$ is not contained in any proper parabolic subgroup of $\bfG_\bbC$. 
		By \Cref{lem:conjugate to Sp_n over C} and \Cref{lem:conjugate to Sp_n over Q}, there exists $l_\bbQ\in\GL_n(\bbQ)$ such that $\bfF=l_\bbQ\Sp_nl_\bbQ^{-1}$; we note that $n$ is even. Let $w_0=l_\bbQ(e_1\wedge e_2+e_3\wedge e_4+\cdots+e_{n-1}\wedge e_n)$, so that $\bfG_{w_0}=\bfF$. Multiplying $w_0$ by a positive integer, we may assume that $w_0\in\wedge^2\bbZ^n$. Hence (\ref{itm:focusing-3}) of \Cref{prop:consequence of linear focusing} holds.
	\end{proof}

        \section{Interpretation of Diophantine and arithmetic conditions}\label{subsect:interpretation of Diophantine and arithmetic conditions}

	In this section, we reformulate our Diophantine and arithmetic properties of $A\in M_{d,n-d}(\bbR)$ in a group theoretic manner. 
 
	Let $d$ be an integer such that $2\leq d\leq n-1$. Recall that for any $A\in M_{d,n-d}(\bbR)$, we define the following affine subspace of $\bbR^{n-1}$:
	\begin{equation}
	\cL_A = \{ (\bfx,\widetilde{\bfx}A) \mid \bfx\in\bbR^{d-1} \},
	\end{equation}
	where $\widetilde{\bfx}:=(1,\bfx)\in \bbR^{d}$ for any $\bfx\in \bbR^{d-1}$.
	
	We write 
	\begin{equation}
	u_A:=\begin{pmatrix}
	I_d & A \\
	0 & I_{n-d}
	\end{pmatrix},
	\end{equation}
	and we note that $\cL_A=\cL_0u_A$. 
	
	As in \eqref{eq:Omega}, let $\Omega$ be a compact subset of $P_1L_d$ such that the linear span of $P_1\Omega$ in $P_1\backslash G$ is $P_1L_d$.
	
	Recall that we have defined the following two flows in $\SL_n$:
	\begin{equation}
	b_t=\begin{pmatrix}
	e^{\frac{n-d}{d}t}I_d & \\ 
	& e^{-t}I_{n-d}
	\end{pmatrix},\;
	c_t=\begin{pmatrix}
	e^{\frac{nd-n}{d}t} & & \\
	& e^{-\frac{n}{d}t}I_{d-1} & \\
	& & I_{n-d}
	\end{pmatrix},
	\end{equation}
	so that $g_t=b_tc_t$.
    We identify the expanding 
    horospherical group $U^+$ of $g_1$ with $\bbR^{n-1}$ in view of \eqref{eq:U+}.
	
	We need the following elementary fact, whose proof is left to the reader:
	\begin{lem}\label{lem:Omega preserves the size}
		Let $\Sigma$ be a compact subset of $\bbR^k$ whose linear span is the full $\bbR^k$. Then there exists $C>1$ such that for all $v\in\bbR^k$.
		\[
		C^{-1}\norm{v}\leq\sup_{X\in\Sigma}\abs{X\cdot v}\leq C\norm{v},
		\]
    where $\cdot$ denotes the inner product.
	\end{lem}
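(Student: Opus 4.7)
The plan is to prove the two inequalities separately by a straightforward compactness/continuity argument.

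For the upper bound, I would first note that the map $X\mapsto \norm{X}$ (operator norm) is continuous on $(\bbR^d)^*$, and hence $C_1:=\sup_{X\in\Sigma}\norm{X}$ is finite because $\Sigma$ is compact. The elementary estimate $\abs{Xv}\leq \norm{X}\norm{v}$ immediately gives $\sup_{X\in\Sigma}\abs{Xv}\leq C_1\norm{v}$ for every $v\in\bbR^d$.

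For the lower bound, I would consider the continuous function $f\colon\bbR^d\to\bbR_{\geq 0}$ defined by $f(v)=\sup_{X\in\Sigma}\abs{Xv}$. Continuity follows from compactness of $\Sigma$ and continuity of $(X,v)\mapsto \abs{Xv}$. The key observation is that $f(v)>0$ whenever $v\neq 0$: if $f(v)=0$, then $Xv=0$ for every $X\in\Sigma$, so $v$ is annihilated by $\mathrm{span}(\Sigma)=(\bbR^d)^*$, forcing $v=0$. Restricting $f$ to the unit sphere $S^{d-1}$, which is compact, we conclude that $f$ attains a positive minimum $c_2>0$ on $S^{d-1}$. By homogeneity ($f(\alpha v)=\abs{\alpha}f(v)$), this gives $f(v)\geq c_2\norm{v}$ for all $v\in\bbR^d$.

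Taking $C=\max(C_1,\,c_2^{-1})$ yields the conclusion. There is no real obstacle here; the only thing to double check is that the hypothesis that $\Sigma$ spans $(\bbR^d)^*$ is used exactly to rule out a non-trivial common kernel, which is what makes the positive-minimum argument work.
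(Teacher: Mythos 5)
Your proof is correct. The paper states this lemma as an elementary fact without giving a proof, and the compactness/homogeneity argument you give (bounding $\sup_{X\in\Sigma}\norm{X}$ for the upper bound, and using the absence of a common kernel together with compactness of the unit sphere for the lower bound) is exactly the standard way to prove it.
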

	
	The following two lemmas relate the Diophantine properties of $A$ with certain bounds in the standard representation of $\SL_n$. We recall the definitions of $\cW$ and $\cW'$ from \Cref{subsect:statement of the main results}.
	
	\begin{lem}\label{lem:interpretation of Wrmn}
		Given $0<r<d$. The following are equivalent:
		\begin{enumerate}
			\item \label{item:A in Wrmn}
			$A\in\cW_{\frac{n-d+r}{d-r}}(d,n-d)$.
			
			\item \label{item:b_t decay Wrmn}
			There exist $R>0$, $t_i\to\infty$ and $v_i\in\bbZ^n\setminus\{0\}$ such that
			\begin{equation}
			\norm{b_{dt_i}u_Av_i}\leq Re^{-rt_i}.
			\end{equation}
			
			\item \label{item:g_t decay Wrmn}
			There exist $C>0$, $t_i\to\infty$ and $v_i\in\bbZ^n\setminus\{0\}$ such that
			\begin{equation}
			\sup_{\omega\in\Omega}\norm{g_{t_i}\omega u_Av_i}\leq Ce^{(d-r-1)t_i}.
			\end{equation}
		\end{enumerate}
	\end{lem}
	
	\begin{proof}
		Write $v_i=\begin{pmatrix}
		\bfp_i \\ \bfq_i
		\end{pmatrix}$ where $\bfp_i\in \bbZ^d$ and $\bfq_i\in\bbZ^{n-d}\setminus{0}$. Then $u_Av_i=\begin{pmatrix}
		A\bfq_i+\bfp_i \\ \bfq_i
		\end{pmatrix}$. One can verify directly that (\ref{item:A in Wrmn}) and (\ref{item:b_t decay Wrmn}) are equivalent to the following: there exists $t_i'\to\infty$ such that
		\begin{equation}\label{eq:sizes}
		\norm{A\bfq_i+\bfp_i} = O(e^{-(n-d+r)t_i'}) \text{ and } \norm{\bfq_i}=e^{(d-r)t_i'}
		\end{equation}
		with the standard big-$O$ notation.
		One can also verify that \eqref{eq:sizes} implies (\ref{item:g_t decay Wrmn}). We now show that (\ref{item:g_t decay Wrmn}) implies \eqref{eq:sizes}.
  
		Since $P_1L_d=P_1(U^+\cap L_d)$, without loss of generality we may assume that $\Omega$ is a compact subset of $U^+\cap L_d\cong \bbR^{d-1}$ which is not contained in any proper affine subspace of $\bbR^{d-1}$. 
		Let $\psi$ denote the identification of $U^+\cap L_d$ with $\bbR^{d-1}$, and $\widetilde{\psi}:U^+\cap L_d\to\bbR^d$ is given by $\omega\mapsto(1,{\psi(\omega)})$.
		By computation, 
		\[
		\omega u_Av_i=\begin{pmatrix}
		\widetilde{\psi}(\omega)\cdot(A\bfq_i+\bfp_i) \\ (A\bfq_i+\bfp_i)_2 \\ \vdots \\ (A\bfq_i+\bfp_i)_d \\ \bfq_i
		\end{pmatrix}
		\]
		By our assumption on $\Omega$, $\widetilde{\psi}(\Omega)$ is compact and spans $\bbR^d$. Applying \Cref{lem:Omega preserves the size} to $\Sigma=\widetilde{\psi}(\Omega)$ we have
		\[\sup_{\omega\in\Omega}\norm{g_{t_i}\omega u_Av_i}\asymp e^{(n-1)t_i}\norm{A\bfq_i+\bfp_i}+e^{-t_i}\norm{\bfq_i}.\]
		It follows that (\ref{item:g_t decay Wrmn}) is equivalent to \eqref{eq:sizes}.
	\end{proof}
	
	Analogous to the above lemma, we also have the following:
	
	\begin{lem}\label{lem:interpretation of wrmn}
		Given $0<r<d$. The following are equivalent:
		\begin{enumerate}
			\item \label{item:A in wrmn}
			$A\in\cW'_{\frac{n-d+r}{d-r}}(d,n-d)$.
			
			\item \label{item:b_t decay wrmn}
			There exists $R_i\to 0$, $t_i\to\infty$ and $v_i\in\bbZ^n\setminus\{0\}$ such that
			\begin{equation}
			\norm{b_{dt_i}u_Av_i}\leq R_ie^{-rt_i}.
			\end{equation}
			
			\item \label{item:g_t decay wrmn}
			There exists $C_i\to 0$, $t_i\to\infty$ and $v_i\in\bbZ^n\setminus\{0\}$ such that
			\begin{equation}
			\sup_{\omega\in\Omega}\norm{g_{t_i}\omega u_Av_i}\leq C_ie^{(d-r-1)t_i}.
			\end{equation}
		\end{enumerate}
	\end{lem}
	
	\begin{proof}
		Write $v_i=\begin{pmatrix}
		\bfp_i \\ \bfq_i
		\end{pmatrix}$. Arguing in the same way as in \Cref{lem:interpretation of Wrmn}, one can show that (\ref{item:A in wrmn}),(\ref{item:b_t decay wrmn}), and (\ref{item:g_t decay wrmn}) are all equivalent to the following: there exists $t_i'\to\infty$ such that
		\begin{equation}
		\norm{A\bfq_i+\bfp_i} = o(e^{-(n-d+r)t_i'}) \text{ and } \norm{\bfq_i}=e^{(d-r)t_i'}
		\end{equation}
		with the standard small-$o$ notation.
	\end{proof}
	
	For the next lemma, we assume that $d=2$, and prove an elementary result concerning the exterior square of the standard representation of $\SL_n$.
	
	Now that $A\in M_{2,n-2}(\bbR)$, and let $N=\begin{pmatrix}
	n-2 \\ 2
	\end{pmatrix}$. Let $\Aext\in M_{2n-3,N}(\bbR)$ be as defined in \eqref{eq:Aext}. 
	
	\begin{lem}\label{lem:interpretation of sympletic}
		Let $W$ be the exterior square of the standard representation of $\SL_n$. The following are equivalent:
		\begin{enumerate}
			\item $\Aext \in \cW_{\frac{n-2}{2}}(2n-3,N)$.
			\item There exists $C>0$, $t_i\to\infty$ and $w_i\in W(\bbZ)\setminus\{0\}$ such that
			\begin{equation}\label{eq:bounded in exterior two}
			\sup_{\omega\in\Omega}\norm{g_{t_i}\omega u_Aw_i}\leq C.
			\end{equation}
		\end{enumerate}
	\end{lem}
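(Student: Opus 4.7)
My plan is to relate both conditions to a common Diophantine system by computing the $g_A$-action on $\wedge^2\bbR^n$ in a well-chosen basis and by tracking the expanding and contracting weight spaces for $g_t$. First, as in the proofs of \Cref{lem:interpretation of Wrmn} and \Cref{lem:interpretation of wrmn}, I would reduce to $\Omega$ being a compact subset of $U^+\cap L_2\cong\bbR$ containing at least two distinct points, so that $\omega=\omega(s)$ is parametrized by a scalar. Decomposing $w\in\wedge^2\bbR^n$ according to the block structure
\[
w = \alpha\,e_1\!\wedge\! e_2 + \sum_{k=1}^{n-2}\beta_k\,e_1\!\wedge\! e_{k+2} + \sum_{k=1}^{n-2}\gamma_k\,e_2\!\wedge\! e_{k+2} + \sum_{1\le i<j\le n-2}\eta_{ij}\,e_{i+2}\!\wedge\! e_{j+2},
\]
with $(\alpha,\boldsymbol{\beta},\boldsymbol{\gamma},\boldsymbol{\eta})\in\bbZ\times\bbZ^{n-2}\times\bbZ^{n-2}\times\bbZ^N$ when $w\in W(\bbZ)$, a direct expansion gives the coefficients of $g_A w$:
\[
\alpha' = \alpha + \bfb\cdot\boldsymbol{\beta} - \bfa\cdot\boldsymbol{\gamma} + Z\boldsymbol{\eta},\qquad \boldsymbol{\beta}' = \boldsymbol{\beta}+X\boldsymbol{\eta},\qquad \boldsymbol{\gamma}' = \boldsymbol{\gamma}+Y\boldsymbol{\eta},\qquad \boldsymbol{\eta}'=\boldsymbol{\eta},
\]
where $X,Y,Z$ are exactly the blocks of $\Aext$ from \eqref{eq:Aext} and $\bfa,\bfb$ are the rows of $A$. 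The weights of $g_t$ on $\wedge^2\bbR^n$ are $n-2$ on the summands containing $e_1$ and $-2$ on the remaining ones, while $\omega(s)$ only mixes each $e_2\wedge e_{k+2}$ into $e_1\wedge e_{k+2}$ with coefficient $s$. Combining these observations and using that $\Omega$ spans $\bbR$, I would obtain
\[
\sup_{\omega\in\Omega}\norm{g_t\omega g_A w} \;\asymp\; e^{(n-2)t}\max\bigl(|\alpha'|,\norm{\boldsymbol{\beta}'},\norm{\boldsymbol{\gamma}'}\bigr) + e^{-2t}\norm{\boldsymbol{\eta}}.
\]

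Granted this, the hypothesis $\sup_\omega\norm{g_{t_i}\omega g_A w_i}\le C$ is equivalent to the simultaneous inequalities $\norm{\boldsymbol{\eta}_i}\ll e^{2t_i}$, $\norm{X\boldsymbol{\eta}_i+\boldsymbol{\beta}_i}\ll e^{-(n-2)t_i}$, $\norm{Y\boldsymbol{\eta}_i+\boldsymbol{\gamma}_i}\ll e^{-(n-2)t_i}$, and $|\alpha_i+\bfb\cdot\boldsymbol{\beta}_i-\bfa\cdot\boldsymbol{\gamma}_i+Z\boldsymbol{\eta}_i|\ll e^{-(n-2)t_i}$. From the definitions of $X,Y,Z$, a direct computation gives the identities $\bfb X=-Z$ and $\bfa Y=Z$; substituting $\boldsymbol{\beta}_i=-X\boldsymbol{\eta}_i+O(e^{-(n-2)t_i})$ and $\boldsymbol{\gamma}_i=-Y\boldsymbol{\eta}_i+O(e^{-(n-2)t_i})$ into the last inequality reduces it to $|\alpha_i+3Z\boldsymbol{\eta}_i|\ll e^{-(n-2)t_i}$. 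Setting $\bfq_i:=3\boldsymbol{\eta}_i$ and $\bfp_i:=(3\boldsymbol{\beta}_i,3\boldsymbol{\gamma}_i,\alpha_i)\in\bbZ^{2n-3}$, and noting that $e^{2t_i}\asymp\norm{\boldsymbol{\eta}_i}$ makes $e^{-(n-2)t_i}\asymp\norm{\bfq_i}^{-(n-2)/2}$, I get $\norm{\Aext\bfq_i+\bfp_i}\ll\norm{\bfq_i}^{-(n-2)/2}$, i.e.\ $\Aext\in\cW_{(n-2)/2}(2n-3,N)$. Conversely, given such a Diophantine solution $((\bfp_X,\bfp_Y,p_Z),\bfq)$, setting $\boldsymbol{\eta}=\bfq$, $\boldsymbol{\beta}=\bfp_X$, $\boldsymbol{\gamma}=\bfp_Y$, $\alpha=-3p_Z$, and choosing $t$ with $e^{2t}\asymp\norm{\bfq}$, the same algebraic identities recover the bound on $\wedge^2\bbR^n$.

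The main subtlety is producing infinitely many distinct $\boldsymbol{\eta}_i$'s in the bounded-to-Diophantine direction. If $\boldsymbol{\eta}_i=0$ eventually, the four inequalities force $w_i=0$, contradicting $w_i\neq 0$; if $\{\boldsymbol{\eta}_i\}$ is unbounded, I pass to a subsequence with $\norm{\boldsymbol{\eta}_i}\to\infty$; if $\{\boldsymbol{\eta}_i\}$ is bounded, pigeonholing to a fixed $\boldsymbol{\eta}_*\neq 0$ forces $X\boldsymbol{\eta}_*,Y\boldsymbol{\eta}_*\in\bbZ^{n-2}$ and $3Z\boldsymbol{\eta}_*\in\bbZ$ exactly, and then the integer multiples $k\boldsymbol{\eta}_*$ automatically furnish infinitely many Diophantine solutions, so $\Aext\in\cW_r$ for every $r>0$. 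Beyond this bookkeeping and the explicit $3Z$-identity, the lemma is essentially a direct translation between the dynamical and Diophantine pictures through the exterior square.
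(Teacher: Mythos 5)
Your strategy matches the paper's: split $\wedge^2\bbR^n$ into the $g_t$-weight spaces $W_1 = V_1\wedge V_1 \oplus V_1\wedge V_2$ (weight $n-2$) and $W_2 = V_2\wedge V_2$ (weight $-2$), apply \Cref{lem:Omega preserves the size} to show that $\Omega$-averaging preserves the size of each projection, and read off the Diophantine inequalities from the action of $g_A$ in coordinates. Your explicit matrix identities $\mathbf{b}X = -Z$ and $\mathbf{a}Y = Z$ cleanly encode the paper's ``insert the first two equations into the third'' step, and your case analysis on $\{\boldsymbol{\eta}_i\}$ to ensure infinitely many distinct Diophantine solutions (in particular the eventually-constant case giving exact integer solutions, which then multiply up) makes explicit a point the paper passes over quickly.

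However, the $\alpha'$ formula has a sign error which propagates into a false claim. The coefficient of $e_1\wedge e_2$ in $g_A(e_{i+2}\wedge e_{j+2})$ is $a_ib_j - a_jb_i$, while the entry $z_{ij}$ of $Z$ is defined as $a_jb_i - a_ib_j$; the correct formula is therefore $\alpha' = \alpha + \mathbf{b}\cdot\boldsymbol{\beta} - \mathbf{a}\cdot\boldsymbol{\gamma} - Z\boldsymbol{\eta}$, not $+Z\boldsymbol{\eta}$. Redoing your substitution with this sign and with your identities gives $\alpha - \mathbf{b}X\boldsymbol{\eta} + \mathbf{a}Y\boldsymbol{\eta} - Z\boldsymbol{\eta} = \alpha + Z\boldsymbol{\eta} + Z\boldsymbol{\eta} - Z\boldsymbol{\eta} = \alpha + Z\boldsymbol{\eta}$, so the last inequality is $\abs{\alpha_i + Z\boldsymbol{\eta}_i}\ll e^{-(n-2)t_i}$, not $\abs{\alpha_i + 3Z\boldsymbol{\eta}_i}$. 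The spurious factor of $3$ then breaks your choice $\bfq_i = 3\boldsymbol{\eta}_i$, $\bfp_i = (3\boldsymbol{\beta}_i, 3\boldsymbol{\gamma}_i, \alpha_i)$: the $Z$-block of $\Aext\bfq_i + \bfp_i$ becomes $3Z\boldsymbol{\eta}_i + \alpha_i = 2Z\boldsymbol{\eta}_i + (\alpha_i + Z\boldsymbol{\eta}_i)$, and the first summand is uncontrolled. (The same sign confusion infects the converse step where you set $\alpha = -3p_Z$.) Once the sign is corrected no rescaling is needed at all: take $\bfq_i = \boldsymbol{\eta}_i$ and $\bfp_i = (\boldsymbol{\beta}_i, \boldsymbol{\gamma}_i, \alpha_i)$, so that $\Aext\bfq_i + \bfp_i = (X\boldsymbol{\eta}_i + \boldsymbol{\beta}_i,\, Y\boldsymbol{\eta}_i + \boldsymbol{\gamma}_i,\, Z\boldsymbol{\eta}_i + \alpha_i)$ with every block $\ll e^{-(n-2)t_i}$, which with $e^{2t_i}\asymp\norm{\boldsymbol{\eta}_i}$ is precisely $\norm{\Aext\bfq_i + \bfp_i}\ll\norm{\bfq_i}^{-(n-2)/2}$. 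With this correction your proof coincides with the paper's.
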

	
	\begin{proof}
		We note that the $w_i$ is not necessarily decomposable, and thus we cannot apply \Cref{lem:reducing to standard representation}.
		
		Let $V$ be the standard representation of $\SL_{n}$; let $V_1$ and $V_2$ be the linear spans of $e_1,e_2$ and $e_3,\dots, e_n$ respectively. Let $W_1=V_1\wedge V_1+V_1\otimes V_2$, $W_2=V_2\wedge V_2$. We have $W=W_1\oplus W_2$, and let $\pi_1,\pi_2$ be the projections from $W$ to $W_1,W_2$ respectively. Let $\psi$ be the natural identification of $(U^+\cap L_2)$ with $\bbR$ and let $\widetilde{\psi}:U^+\cap L_2\to\bbR^2$ be given by $\omega\mapsto(1,\psi(\omega))$. Let $3\leq i\leq n$. Then $V_1\otimes\bbR e_i\cong\bbR^2$ is $(U^+\cap L_2)$-stable. And we have 
		\[
		\omega(a\cdot e_1\wedge e_i+b\cdot e_2\wedge e_i)=(\widetilde{\psi}(\omega)\cdot (a,b))e_1\wedge e_i + b\cdot e_2\wedge e_i, \, \forall (a,b)\in\bbR^2.
		\]
		Since $\psi$ is nonconstant, $\widetilde{\psi}(\Omega)$ spans $\bbR^2$. Applying \Cref{lem:Omega preserves the size} to $\Sigma=\widetilde{\psi}(\Omega)$ and $k=2$, one can show that 
		\[
		\sup_{\omega\in\Omega}\norm{\omega w}\asymp \norm{w},\,\forall\, w\in V_1\otimes \bbR e_i. 
		\]
		And, since $U^+\cap L_2$ acts trivially on $V_1\wedge V_1$ and on $W_2$, for any $w\in W$ we have
		\[
		\sup_{\omega\in\Omega}\norm{\pi_j(\omega w)}=\sup_{\omega\in\Omega}\norm{\omega \pi_j(w)}\asymp \norm{\pi_j(w)}, \text{ where }j=1,2.
		\]
		Hence 
		\begin{align*}
		\sup_{\omega\in\Omega}\norm{g_{t_i}\omega u_Aw_i} 
		&\asymp e^{(n-2)t}\sup_{\omega\in\Omega}\norm{\pi_1(\omega u_Aw_i)}+e^{-2t}\sup_{\omega\in\Omega}\norm{\pi_2(\omega u_Aw_i)}\\
		&\asymp e^{(n-2)t}\norm{\pi_1(u_Aw_i)}+e^{-2t}\norm{\pi_2(u_Aw_i)}.
		\end{align*}
		Hence \eqref{eq:bounded in exterior two} is equivalent to 
		\begin{equation}\label{eq:sizes exterior two}
		\begin{cases}
		\norm{\pi_1(u_Aw_i)}=O(e^{-(n-2)t_i})\\
		\norm{\pi_2(u_Aw_i)}=O(e^{2t_i})
		\end{cases}
		\end{equation}
		For simplicity, we fix $i$ and write $w, t$ for $w_i, t_i$.
		By taking a smaller $t$ if necessary, we rewrite \eqref{eq:sizes exterior two} as
		\begin{equation}\label{eq:sizes exterior twoo}
		\begin{cases}
		\norm{\pi_1(u_Aw)}=O(e^{-(n-2)t})\\
		\norm{\pi_2(u_Aw)}\asymp e^{2t}
		\end{cases}
		\end{equation}
		Write $w=\begin{pmatrix}
		\bfp \\ \bfq
		\end{pmatrix}$, where $\bfp\in \bbZ^{2n-3}$ and $\bfq\in\bbZ^{N}$. We now check that \eqref{eq:sizes exterior twoo} is equivalent to 
		\begin{equation}\label{eq:temp8}
		\norm{\Aext\bfq+\bfp}=O(\norm{\bfq}^{-\frac{n-2}{2}}).
		\end{equation}
		For convenience we rewrite
		\[
		A=\begin{pmatrix}
		a_3 & a_4 & \cdots & a_n \\
		b_3 & b_4 & \cdots & b_n
		\end{pmatrix}.
		\]
		For all $3\leq i\leq n$ and $i<j\leq n$, we have
		\begin{align*}
		\gA(e_1\wedge e_2)=&e_1\wedge e_2.\\
		\gA(e_1\wedge e_i)=&e_1\wedge e_i+b_ie_1\wedge e_2.\\
		\gA(e_2\wedge e_i)=&e_2\wedge e_i-a_ie_1\wedge e_2.\\
		\gA(e_i\wedge e_j)=&e_i\wedge e_j+(a_ib_j-a_jb_i)e_1\wedge e_2+a_ie_1\wedge e_j+b_ie_2\wedge e_j-a_je_1\wedge e_i-b_je_2\wedge e_i.
		\end{align*}
		We write $w=\sum_{1\leq k<l\leq n}C_{kl}e_k\wedge e_l$, and compute
		\begin{align*}
		\gA w=\sum_{3\leq i<j\leq n}C_{ij}e_i\wedge e_j+
        \sum_i(C_{1i}+\sum_{j<i}C_{ji}a_j-\sum_{j>i}C_{ij}a_j)e_1\wedge e_i\\
        +\sum_i(C_{2i}+\sum_{j<i}C_{ji}b_j-\sum_{j>i}C_{ij}b_j)e_2\wedge e_i\\
        +\big(\sum_{i<j}(a_ib_j-a_jb_i)C_{ij}+\sum_{i}b_iC_{1i}-\sum_{j}a_jC_{2j}+C_{12}\big)e_1\wedge e_2.
		\end{align*}
		Now \eqref{eq:sizes exterior twoo} is equivalent to
		\begin{equation}\label{eq:temp9}
		\begin{cases}
		C_{1i}+\sum_{j<i}C_{ji}a_j-\sum_{j>i}C_{ij}a_j=O(e^{-(n-2)t})\\
		C_{2i}+\sum_{j<i}C_{ji}b_j-\sum_{j>i}C_{ij}b_j=O(e^{-(n-2)t})\\
		\sum_{i<j}(a_ib_j-a_jb_i)C_{ij}+\sum_{i}b_iC_{1i}-\sum_{j}a_jC_{2j}+C_{12}=O(e^{-(n-2)t})\\
		C_{ij}=O(e^{2t})
		\end{cases}
		\end{equation}
		Therefore from the first two equations, we obtain
		\[
		\sum_{i}b_iC_{1i}-\sum_{j}a_jC_{2j}= -2\sum_{i<j}(a_ib_j-a_jb_i)C_{ij}+O(e^{-(n-2)t}).
		\]
		Inserting this data in the third equation, we conclude that \eqref{eq:temp9} is equivalent to
		\begin{equation}\label{eq:temp10}
		\begin{cases}
		C_{1i}+\sum_{j<i}C_{ji}a_j-\sum_{j>i}C_{ij}a_j=O(e^{-(n-2)t})\\
		C_{2i}+\sum_{j<i}C_{ji}b_j-\sum_{j>i}C_{ij}b_j=O(e^{-(n-2)t})\\
		-\sum_{i<j}(a_ib_j-a_jb_i)C_{ij}+C_{12}=O(e^{-(n-2)t})\\
		C_{ij}=O(e^{2t})
		\end{cases}
		\end{equation}
		By our definition of $\Aext$, \eqref{eq:temp10} is equivalent to \eqref{eq:temp8};
		and by our definiton of $\cW$, \eqref{eq:temp8} is equivalent to $\Aext \in \cW_{\frac{n-2}{2}}(2n-3,N)$. 
	\end{proof}
	
	\begin{lem}\label{lem:interpretation of Res GL_r}
		Let $r\geq d$ be an integer. Let $\bbK\subset\bbR$ be a real field. The following are equivalent:
		\begin{enumerate}
			\item $\LA$ is contained in some $(r-1)$-dimensional affine subspace of $\bbR^{n-1}$ which is defined over $\bbK$.
			\item $\LA$ is contained in some $(r-1)$-dimensional linear subspace of $\bbP^{n-1}(\bbR)$ which is defined over $\bbK$.
			\item $u_A\in P_dP_r\bfG(\bbK)$.
		\end{enumerate}
	\end{lem}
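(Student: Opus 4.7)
The plan is to prove (1) $\Leftrightarrow$ (2) by passing to the projective closure, and then (2) $\Leftrightarrow$ (3) via the standard identifications $P_1\backslash G\cong\bbP^{n-1}(\bbR)$ and $P_r\backslash G\cong\Gr(r,n)$, together with an elementary description of the double coset $P_dP_r$. The structural input I will use is \cite[Prop.~20.5]{Bor91}, which implies that $\bbK$-rational points of $P_r\backslash G$ are exactly the orbits $P_rh$ with $h\in P_r\bfG(\bbK)$.

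For (1) $\Leftrightarrow$ (2), I embed $\bbR^{n-1}\hookrightarrow\bbP^{n-1}(\bbR)$ by $\bfy\mapsto[1:\bfy]$. If $\cA\subset\bbR^{n-1}$ is an $(r-1)$-dimensional affine subspace defined over $\bbK$ and containing $\LA$, then its projective closure $\overline{\cA}\subset\bbP^{n-1}(\bbR)$ is an $(r-1)$-dimensional linear subspace defined over $\bbK$ and containing $\LA$. Conversely, if $L\subset\bbP^{n-1}(\bbR)$ is an $(r-1)$-dimensional linear subspace defined over $\bbK$ and containing $\LA$, then $L$ meets the affine chart (because $\LA$ does), so $L\cap\bbR^{n-1}$ is an $(r-1)$-dimensional affine subspace defined over $\bbK$ and containing $\LA$.

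For (2) $\Leftrightarrow$ (3), I first identify $\LA$, viewed projectively, as the subset $P_1P_d\gA\subset P_1\backslash G$. Under $P_1g\mapsto[\text{first row of }g]$, the coset $P_1P_d\gA$ parametrizes the projectivization of the $d$-dimensional row-span $V_d(\gA)$ of the first $d$ rows of $\gA$; a direct computation using $\LA=\cL_0\gA$ shows this agrees with $\LA$ as a $(d-1)$-dimensional projective subspace. An $(r-1)$-dimensional linear subspace of $\bbP^{n-1}(\bbR)$ defined over $\bbK$ has the form $P_1P_rh$ with $P_rh\in(P_r\backslash G)(\bbK)$, and by \cite[Prop.~20.5]{Bor91} the latter is equivalent to $h\in P_r\bfG(\bbK)$. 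The containment $P_1P_d\gA\subset P_1P_rh$ in $P_1\backslash G$ is then equivalent to $V_d(\gA)\subset V_r(h)$, which, after right multiplication by $h^{-1}$, reduces to the condition that the first $d$ rows of $\gA h^{-1}$ have zeros in positions $r+1,\ldots,n$, i.e., the top-right $d\times(n-r)$ block of $\gA h^{-1}$ vanishes.

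The final step is to recognize this vanishing condition as membership in $P_dP_r$. Setting $V^k:=\mathrm{span}(e_{n-k+1},\ldots,e_n)$, the condition is $\gA h^{-1}V^{n-r}\subset V^{n-d}$ under the standard column action. I will verify the identity
\[
P_dP_r=\{g\in G\mid gV^{n-r}\subset V^{n-d}\}
\]
directly: the inclusion $\subseteq$ follows from $p_rV^{n-r}=V^{n-r}\subset V^{n-d}$ (using $r\geq d$) and $p_dV^{n-d}=V^{n-d}$; the inclusion $\supseteq$ uses the transitivity of the $\GL_{n-d}$-Levi factor of $P_d$ on the $(n-r)$-dimensional subspaces of $V^{n-d}$ to find $p_d\in P_d$ with $p_dV^{n-r}=gV^{n-r}$, whence $p_d^{-1}g\in P_r$. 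This yields $\gA h^{-1}\in P_dP_r$, i.e., $\gA\in P_dP_r\bfG(\bbK)$, which is (3); the reverse direction is obtained by reading the same chain backwards. The arguments are all elementary; the only real burden is matching the paper's block-parabolic conventions with the row/column actions consistently.
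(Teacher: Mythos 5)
Your proof is correct and follows the same route as the paper: (1)$\Leftrightarrow$(2) by projective closure, and (2)$\Leftrightarrow$(3) by parametrizing $(r-1)$-dimensional $\bbK$-rational linear subspaces as $P_r$-cosets of $\bfG(\bbK)$ (via Borel, Prop.~20.5) and then checking the block-parabolic description of $P_dP_r$. The paper leaves the characterization $g\in P_dP_r\iff\cL_0g\subset\cP_0$ as ``this can be checked directly''; you fill in precisely that verification.
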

	\begin{proof}
		$(1)\Longleftrightarrow(2)$. This is obvious.
		
		$(2)\Longleftrightarrow(3)$. Let $[x_1:\dots:x_n]$ be the homogeneous coordinates of $\bbP^{n-1}$. Let $\cP_0$ be the $(r-1)$-dimensional linear subspace of $\bbP^{n-1}$ defined by $x_{r+1}=\cdots=x_n=0$. Now (2) is equivalent to $\cL_A=\cL_0u_A\subset\cP_0g_\bbK$ for some $g_\bbK\in\bfG(\bbK)$. Hence it suffices to show that for any $g\in G$, $g\in P_dP_r$ if and only if $\cL_0g\subset\cP_0$. This can be checked directly.
	\end{proof}

	\section{Sharp conditions for non-escape of mass and equidistribution}
	
	In this section, we will obtain sufficient representation theoretic conditions for the non-escape of mass and equidistribution. These representation theoretic conditions were interpreted in terms of Diophantine or arithmetic conditions in \Cref{subsect:interpretation of Diophantine and arithmetic conditions}. We will then prove that these conditions are also necessary for the non-escape of mass or equidistribution. 
	
	\subsection{Non-escape of mass and proof of {Theorem~\ref{thm:main_nondivergence}}}
	Let $\phi$ and $\lambda_{\phi}$ be as in \Cref{subsect:statement of the main results}. We can write $\phi(B)=\Omega u_A$, where $A=A_\phi\in M_{d,n-d}(\bbR)$, and $\Omega$ is a compact subset of $P_1L_d$ such that the linear span of $P_1\Omega$ in $P_1\backslash G$ is $P_1L_d$. Hence $\Omega$ satisfies the condition in the beginning of \Cref{sect:consequences of linear focusing}.
	
	The following theorem is due to Dani-Margulis\cite{DM93} and Kleinbock-Margulis\cite{KM98}. It provides a representation-theoretic criterion for no escape of mass to infinity. What we state here is a consequence of their original theorem.
	
	Let $W_k=\wedge^k\bbR^n$ and $w_k=e_1\wedge\cdots\wedge e_k$.
	
	\begin{thm}[c.f.~{\cite[Theorem 5.2]{KM98}}]\label{thm:KMnondivergence}
		Fix a norm $\lVert\cdot\rVert$ on $W_k$. For any $\epsilon > 0$ and $R > 0$, there exists a compact set $K \subset G/\Gamma$ such that for any $t > 0$ and any ball $J\subset \interval$, one of the following holds:
		\begin{enumerate}
			\item There exist $k \in \{ 1,\dots, n-1 \}$ and $\gamma\in\Gamma$ such that
			\[
			\sup_{s\in J}\lVert g_t\phi(s)\gamma w_k\rVert < R;
			\]
			\item
			\[
			\lvert \{ s\in J \colon g_t\phi(s)x_0 \in K \} \rvert \geq (1-\epsilon)\lvert J \rvert.
			\]
		\end{enumerate}
	\end{thm}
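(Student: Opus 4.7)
The plan is to deduce this dichotomy as a direct application of the Kleinbock--Margulis quantitative non-divergence machinery of \cite{KM98}, which refines the earlier work of Dani--Margulis \cite{DM93}. I would proceed in three steps.

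First, by Mahler's compactness criterion applied through every exterior power of the standard representation, for each $\rho > 0$ the set
\[
K_\rho := \bigl\{ h\Gamma \in X : \norm{h\gamma w_k} \geq \rho \text{ for all } 1 \leq k \leq n-1 \text{ and } \gamma w_k \in \wedge^k\bbZ^n \setminus \{0\} \bigr\}
\]
is compact, and every compact subset of $X$ is contained in some $K_\rho$. Hence the goal becomes: given $R$ and $\epsilon$, produce $\rho = \rho(R,\epsilon) > 0$ such that for every $t > 0$ and every ball $J \subset \interval$, either alternative (1) of the theorem holds for $R$, or the Lebesgue-proportion of $s \in J$ with $g_t\phi(s)x_0 \in K_\rho$ is at least $1-\epsilon$.

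Second, I would verify that for each $t$, $k$, and $\gamma$, the coordinate functions of $s \mapsto g_t\phi(s)\gamma w_k$ are real-analytic on $\interval$ and, on any compact sub-ball of $\interval$, are $(C,\alpha)$-good in the sense of \cite{KM98} with constants $C$ and $\alpha$ independent of $t$ and $\gamma$. The point is that these coordinates are obtained by applying the fixed polynomial representation of $g_t\gamma$ on $\wedge^k\bbR^n$ to the real-analytic entries of $\phi(s)$; the $(C,\alpha)$-good property is inherited from the real-analyticity of $\phi$ on compact sub-balls, passes to finite linear combinations and to suprema of absolute values, and the scalar rescalings induced by the diagonal action of $g_t$ on weight spaces do not affect the $(C,\alpha)$ constants.

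Third, I would apply \cite[Theorem 5.2]{KM98} to this uniformly $(C,\alpha)$-good family. The theorem produces a threshold $\rho = \rho(R,\epsilon, C, \alpha) > 0$ such that for any ball $J \subset \interval$, either some $\sup_{s\in J} \norm{g_t\phi(s)\gamma w_k}$ is already less than $R$, yielding (1), or else the set of $s \in J$ where all these norms simultaneously satisfy $\norm{g_t\phi(s)\gamma w_k} \geq \rho$ has measure at least $(1-\epsilon)\lvert J \rvert$; the latter set coincides with $\{s \in J : g_t\phi(s)x_0 \in K_\rho\}$, yielding (2). Setting $K := K_\rho$ completes the argument.

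The main obstacle is the uniformity of the $(C,\alpha)$-good constants in $t$ and $\gamma$; this is precisely what is achieved in \cite{KM98} by carefully tracking the dependence of the estimates on matrix entries, and it is the sole reason the dichotomy holds with a single compact set $K$ valid for all $t \geq 0$.
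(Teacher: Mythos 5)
Your proposal follows the same path the paper indicates: the paper itself gives no proof but refers to \cite[Theorem 5.2]{KM98} and singles out the $(C,\alpha)$-good property of the family $\{s\mapsto \norm{g_t\phi(s)w}\}$ as the key ingredient, with a pointer to \cite[Proposition 3.4]{KM98}. Your Steps 1 and 3 are fine. Step 2, however, contains a misleading phrase: it is \emph{not} true that the $(C,\alpha)$-good property ``passes to finite linear combinations'' in the sense that a sum of $(C,\alpha)$-good functions inherits the same constants --- this fails in general, since the sublevel set of a combination can be large even when each summand is individually well-behaved. What one actually uses (and what the paper explicitly cites) is \cite[Proposition 3.4]{KM98}: on a ball, any finite-dimensional vector space of real-analytic functions admits a \emph{single uniform} pair $(C,\alpha)$ valid for every nonzero element of the space on compact sub-balls. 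The uniformity in $t$, $\gamma$, and $k$ then follows because every coordinate of $s\mapsto \phi(s)\gamma w_k$ lies in the fixed finite-dimensional space spanned by products of at most $n-1$ entries of $\phi$, and the diagonal action of $g_t$ merely rescales those coordinates, which never affects $(C,\alpha)$-goodness. With that correction in place, your deduction is sound and consistent with what the paper invokes.
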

	
	A key ingredient in the proof is a growth property called the $(C, \alpha)$-good property of the family of functions $\{B\ni s\mapsto \norm{g_t\phi(s)w}:t\geq 1,\,w\in W_k\}$, see \cite[Proposition 3.4]{KM98}, \cite[Section 3.2]{Sha09Invention} and \cite[Section 2.1]{Sha09Duke1}. 
	
	\begin{proof}[Proof of \Cref{thm:main_nondivergence}]
		Suppose that the sequence of $g_t$-translates of $\lambda_{\phi}$ has the escape of mass. Then there exists $\eps>0$ and $t_i\to\infty$ such for any compact set $K\subset X$, we have $g_{t_i}\lambda_{\phi}(K)\leq 1-\eps$. By \Cref{thm:KMnondivergence}, after passing to a subsequence, there exists $k\in\{1,\dots,n-1\}$ and $\gamma_i\in\Gamma$ such that $\sup_{s\in \interval}\lVert g_{t_i}\phi(s)\gamma_i w_k\rVert < 1/i$. By \Cref{prop:notclosed}, $d<n$. Now by \Cref{cor:going to zero}, there exists $t_i'\to\infty$ and $v_i\in\bbZ^n\setminus\{0\}$ such that
		$\sup_{s\in \interval} \lVert{g_{t_i'}\phi(s)v_i}\rVert\to 0$ as $i\to\infty$. We apply \Cref{lem:interpretation of wrmn} (taking $r=d-1$) to conclude that $A_\phi\in \cW'_{n-1}(d,n-d)$.
		
		Conversely, suppose that $d<n$ and $A_\phi\in \cW'_{n-1}(d,n-d)$. Again by \Cref{lem:interpretation of wrmn}, there exists $t_i\to\infty$ and $v_i\in\bbZ^n\setminus\{0\}$ such that
		$\sup_{s\in \interval} \lVert{g_{t_i}\phi(s)v_i}\rVert\to 0$ as $i\to\infty$. By Mahler's compactness criterion, for any compact subset $K$, we have $g_{t_i}\lambda_\phi(K)=0$ for all large $i$. Hence the sequence of $g_t$-translates of $\lambda_{\phi}$ has escape of mass.
	\end{proof}
	
	\subsection{Consequence of the failure of equidistribution}
	We suppose that there is no escape of mass for $\{g_t\lambda_\phi:t>0\}$, and thus any limit measure is a probability measure on $X$. Now suppose that as $t\to\infty$, $g_t\lambda_\phi$ does not get equidistributed on $X=G/\Gamma$ with respect to the $G$-invariant probability measure $\mu_X$. Let $x_0=\bbZ^n\in G/\Gamma$. Then there exists a sequence $t_i\to\infty$, a function $f\in C_c(G/\Gamma)$ and an $\epsilon>0$ such that 
	\begin{equation} \label{eq:failure_of_equidistribution}
	\abs{\int_{B} f(g_{t_i}\phi(s)x_0)\,d\lambda(s)-\int f\,d\mu_X}>\epsilon. 
	\end{equation}
	Since there is no escape of mass, by further passing to a subsequence, we assume that there exists a probability measure $\mu_\phi$ on $X$ such that $g_{t_i}\lambda_\phi\to \mu_\phi$ as $i\to \infty$. 
	
 	We will analytically modify $\phi$ on the left by elements of the centralizer of the flow $\{g_t\}$, to get a new map $\psi:B\to G$ and its corresponding measure $\lambda_\psi$, concentrated on $\psi(B)x_0$, which is also the twist of $\lambda_\phi$ by the same elements in the centralizer. Then after passing to a subsequence, $g_{t_i}\lambda_\psi\to \mu_\psi$ for some probability measure $\mu_\psi$ on $X$. We intend to modify $\phi$ in such a way that $\mu_\psi$ is invariant under a nontrivial unipotent subgroup of $U$. Hence by Ratner's measure classification theorem \cite{Ratner91}, each ergodic component of the limit measure is homogeneous, and hence, if $\mu_\psi$ is not $G$-invariant, then it will be strictly positive on the image in $G/\Gamma$ of a $U$-invariant proper algebraic subvariety of $G$. Then, we can apply the linearization technique developed by Dani and Margulis~\cite{DM93} to obtain certain algebraic conditions as a consequence of \eqref{eq:failure_of_equidistribution}. 
	
	\subsubsection{Twisting $\phi$ by centralizer of $\{g_t\}$ and invariance under a unipotent subgroup}
	
	In view of \eqref{eq:U+} in \Cref{subsect:statement of the main results}, given any row vector $v\in\bbR^{n-1}$, let 
	\[
	u(v)=\begin{pmatrix} 1 & v\\&I_{n-1}\end{pmatrix}\in U^+.
	\]
	We may assume that $B$ is an open ball in $\bbR^{k}$ for some $k\in\bbN$, and let $\tilde\phi:B\to \bbR^{n-1}$ be the analytic map such that $\phi(s)=u(\tilde\phi(s))$ for all $s\in B$. Consider the derivative map $D\tilde\phi:B\to \Hom(\bbR^{k},\bbR^{n-1})$. Since $\phi$ is nonconstant and analytic, there exists $1\leq r\leq n-1$ such that the rank of $D\tilde\phi(s)$ is $r$ for all $s$ in $B$ outside a closed subset of zero Lebesgue measure. 
 Let $\cU_r$ denote subspace spanned by the first $r$ standard basis vectors of $\bbR^{n-1}$.
    By expressing the absolutely continuous measure $\lambda$ on $\bbR^k$ as a countable sum of absolutely continuous measures with small supports, without loss of generality, we may assume that the ball $B$ is small enough so that the rank of $D\tilde\phi(s)$ is $r$ for all $s\in B$, and there exists an analytic function $\tilde\zeta:B\to\SL({n-1},\bbR)$ such that
	\begin{equation} \label{eq:zeta}
	D\tilde\phi(s)(\bbR^k)=\cU_r\cdot\tilde\zeta(s),\ \forall s\in B,
	\end{equation} 
	where $\SL(n-1,\bbR)$ acts on $\bbR^{n-1}$ on the right. For all $s\in B$, define
	\[
	\zeta(s)=\begin{pmatrix} 1 \\ & \tilde\zeta(s) \end{pmatrix}\in Z_G(\{g_t\}).
	\]
	Then
	\begin{equation} \label{eq:uzeta}
	u(v)\zeta(s)=\zeta(s)u(v\tilde\zeta(s)),\ \forall v\in \bbR^{n-1},\, \forall s\in B,
	\end{equation}
	Define $\psi:B\to G$ by
	\begin{equation} \label{eq:psi-uzeta}
	\psi(s)=\zeta(s)\phi(s)=\zeta(s)u(\tilde\phi(s)),\ \forall s\in B.
	\end{equation}
	
	\begin{prop} \label{prop:invariant}
		Let $v\in \cU_r\subset \bbR^{n-1}$, $f\in C_c(G/\Gamma)$ and $x\in G/\Gamma$. Then for all $t>0$,
		\[
		\int_{B} f(u(v)g_t\psi(s)x)\,d\lambda(s) = \int_B f(g_t\psi(s)x)\,d\lambda(s)+o_t(1),
		\]
		where $o_t(1)\to 0$ as $t\to\infty$.
	\end{prop}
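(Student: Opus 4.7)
The plan is to exploit the fact that $u(v)$ lies in the expanding horosphere $U^+$, so conjugating by $g_t^{-1}$ makes it contract, and to then absorb the small remaining perturbation into a change of variables $s \mapsto s + e^{-nt}v$ that approximately reproduces $\psi$ up to this leftward shift.

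First, I would compute explicitly that $g_t^{-1} u(v) g_t = u(e^{-nt}v)$, whence $u(v) g_t = g_t u(e^{-nt}v)$. Combining this with the commutation identity $u(w)\zeta(s) = \zeta(s)u(w\tilde\zeta(s))$ given in the setup and with the fact that $\zeta(s) \in Z_G(\{g_t\})$, one gets
\begin{equation*}
u(v) g_t \psi(s) x \;=\; \zeta(s)\, u\bigl(e^{nt}\tilde\phi(s) + v\,\tilde\zeta(s)\bigr)\, g_t x.
\end{equation*}
On the other hand, for $v \in \bbR^k$ viewed inside $\bbR^{n-1}$, a direct analogous computation gives
\begin{equation*}
g_t \psi(s + e^{-nt}v)\, x \;=\; \zeta(s + e^{-nt}v)\, u\bigl(e^{nt}\tilde\phi(s + e^{-nt}v)\bigr)\, g_t x.
\end{equation*}
The defining property $D\tilde\phi(s)(v) = v\tilde\zeta(s)$ combined with Taylor expansion gives $e^{nt}\tilde\phi(s + e^{-nt}v) = e^{nt}\tilde\phi(s) + v\tilde\zeta(s) + O(e^{-nt})$ uniformly on the compact set $\overline{B}$, while $\zeta(s + e^{-nt}v) = \zeta(s) + O(e^{-nt})$. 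Hence the two group elements above differ by a factor $I + O(e^{-nt})$ on the left of $g_t x$, and applying the uniformly continuous function $f$ yields
\begin{equation*}
f\bigl(u(v) g_t \psi(s) x\bigr) \;=\; f\bigl(g_t \psi(s + e^{-nt}v) x\bigr) + o_t(1),
\end{equation*}
with the $o_t(1)$ uniform in $s \in B$.

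Integrating and applying the change of variables $s' = s + e^{-nt}v$ with $d\lambda = \rho\,ds$ gives
\begin{equation*}
\int_B f(u(v) g_t \psi(s) x)\,d\lambda(s) \;=\; \int_{B + e^{-nt}v} f(g_t \psi(s') x)\,\rho(s' - e^{-nt}v)\,ds' + o_t(1).
\end{equation*}
Since $\rho \in L^1$ and translation is continuous in $L^1$, and since $\|f\|_\infty < \infty$, the symmetric difference of $B+e^{-nt}v$ and $B$ has vanishing $\rho$-mass while $\|\rho(\cdot - e^{-nt}v) - \rho\|_{L^1} \to 0$; together these show the last integral equals $\int_B f(g_t\psi(s)x)\,d\lambda(s) + o_t(1)$, completing the proof.

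The only genuinely delicate point is ensuring the Taylor approximation is uniform in $s$ on the support of $\lambda$; this is automatic because $\phi$ (hence $\tilde\phi$, $\tilde\zeta$ and their derivatives) is real-analytic on an open neighborhood of the compact set $\overline{\mathrm{supp}\,\lambda} \subset B$, so all error terms $O(e^{-nt})$ are genuinely uniform. The rest is bookkeeping with commutation relations in $G$ together with $L^1$-continuity of translation for the absolutely continuous measure $\lambda$.
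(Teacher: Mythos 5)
Your proposal is correct and follows essentially the same route as the paper: conjugate $u(v)$ past $g_t$ using $g_t^{-1}u(v)g_t=u(e^{-nt}v)$, commute through $\zeta(s)$ via $u(w)\zeta(s)=\zeta(s)u(w\tilde\zeta(s))$, apply the Taylor expansion linking $D\tilde\phi(s)$ with $\tilde\zeta(s)$ to replace the perturbed argument by $\psi(s+e^{-nt}v)$ up to a uniformly small multiplicative error, and finish with uniform continuity of $f$ together with $L^1$-continuity of translation of the density of $\lambda$. The only cosmetic difference is that you keep $g_t$ on the right of the $U^+$-factor while the paper keeps it on the left; the underlying identity is the same.
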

	
	\begin{proof} Let $t>0$ and $s\in B$. Then
		\begin{align*}
		u(v)g_t\psi(s)&=g_tu(e^{-nt}v)\psi(s)\\
  &=g_tu(e^{-nt}v)\zeta(s)u(\tilde\phi(s))\text{, by \eqref{eq:psi-uzeta}}\\
		&=g_t\zeta(s)u(e^{-nt}v\tilde\zeta(s)+\tilde\phi(s)) 
  \text{, by \eqref{eq:uzeta}}\\
		&=g_t\zeta(s)u(e^{-nt}D\tilde\phi(s)(\tilde v)+\tilde\phi(s)) \text{, for some $\tilde v\in\bbR^k$ by \eqref{eq:zeta}}\\
		&=g_t\zeta(s)u(\tilde\phi(s+e^{-nt}\tilde v)+o(e^{-nt})) \text{, due to Taylor expansion}\\
		&=g_t\zeta(s)u(o(e^{-nt}))u(\tilde\phi(s+e^{-nt}\tilde v))\\
		&=u(o_t(1))g_t\zeta(s)u(\tilde\phi(s+e^{-nt}\tilde v))\text{, as $g_tu(w)=u(e^{nt}w)g_t$}\\
		&=u(o_t(1))g_t\zeta(s)\zeta(s+e^{-nt}\tilde v)^{-1}\psi(s+e^{-nt}\tilde v) 
  \text{, by \eqref{eq:psi-uzeta}}\\
		&=(I+o_t(1))g_t\psi(s+e^{-nt}\tilde v)
 \text{, as $\zeta(B)\subset Z_G(g_t)$.}
 \end{align*}
		Since $f$ is uniformly continuous on $G/\Gamma$, we have
		\[
		f(u(v)g_t\psi(s)x)=f(g_t\psi(s+e^{-nt}\tilde v)x)+o_t(1).
		\]
		Since $\lambda$ is absolutely continuous with respect to the Lebesgue measure on $\bbR^k$ with support contained in $B$, there exists $h\in L^1(\bbR^k)$ such that $d\lambda(s)=h(s)ds$ for Lebesgue a.e.\ $s\in\bbR^k$. For any $w\in\bbR^k$, let $h_w(s)=h(s-w)$ for all $s\in \bbR^{k-1}$. Then $\norm{h_w-h}_1\to 0$ as $w\to 0$ in $\bbR^k$. Therefore
		\begin{align*}
		&\int_{B} f(u(v)g_t\psi(s)x)\,d\lambda(s)\\
		&= \int_{B} f(g_t\psi(s+e^{-nt}\tilde v)x)\,d\lambda(s)+o_t(1)\\
		&=\int_{\bbR^k} f(g_t\psi(s+e^{-nt}\tilde v)x)h(s)\,ds+o_t(1)\\
		&=\int_{\bbR^k} f(g_t\psi(s)x)h(s-e^{-nt}\tilde v)\,ds+o_t(1)\\
		&=\int_{\bbR^k} f(g_t\psi(s)x)h(s)\,ds+O(\norm{f}_{\infty}\norm{h_{e^{-nt}\tilde v}-h}_1)+o_t(1)\\
		&=\int_B f(g_t\psi(s)x)\,d\lambda(s)+o_t(1).
		\end{align*}
	\end{proof}
	
	\subsubsection*{Ratner's theorem and linearization technique for $(C,\alpha)$-good maps}
	Now let $\lambda_\psi$ be the probability measure, which is the pushforward of $\lambda$ under the map $s\mapsto \psi(s)x_0$ from $B$ to $X=G/\Gamma$. 
	We suppose that we are given a sequence $t_i\to\infty$ such that $g_{t_i}\lambda_{\psi}$ converges to a probability measure, say $\mu$, on $G/\Gamma$ as $i\to\infty$ with respect to the weak-$^\ast$ topology; that is, for any $f\in C_c(G/\Gamma)$, we have 
	\[
	\lim_{i\to\infty} \int_{B} f(g_{t_i}\psi(s)x_0)\,d\lambda(s)\to \int_X f\,d\mu.
	\]
	Then by \Cref{prop:invariant} we conclude that $\mu$ in invariant under $U_r=\{u(v):v\in\cU_r\subset \bbR^n\}$. Therefore, by Ratner's classification of ergodic invariant measures \cite{Ratner91}, almost every $U_r$-ergodic component of $\mu$ is a periodic (homogeneous) measure on $G/\Gamma$. 
	
	Therefore, we can argue as in \cite[Propositions~6.3-6.4]{KSSY21} using the linearization technique from Dani-Margulis~\cite[Proposition~4.2]{DM93}, which uses polynomial growth properties of one-dimensional unipotent orbits in a vector space. Since we are working with translates of analytic manifolds, here we use similar $(C,\alpha)$-good growth properties introduced by Kleinbock and Margulis~\cite{KM98} for the family of analytic functions 
	\[
	\{s\mapsto \xi(g_t\psi(s)v):t\in\bbR,v\in V,\,\xi\in V^\ast\}
	\]
	on $B$, where $V$ is any finite dimensional representation of $G$. To take care of higher dimensional manifolds (when $k>1$), we argue as in \cite[Theorem~5.2, Proposition~5.4]{Sha95Duke} and obtain the following linear dynamical boundedness condition:
	
	Let $\mathfrak{g}$ denote the Lie algebra of $G=\SL_n(\bbR)$ with its natural $\bbQ$-structure. For each $1\leq d<\dim \mathfrak{g}$, let $V_d=\bigwedge^d\mathfrak{g}$. 
	
	\begin{prop} \label{prop:consequence of failure of equidistribution} Suppose that the $U_r$-invariant limiting measure $\mu$ is not $G$-invariant. Then after passing to a sequence of $\{t_i\}$, there exists $1\leq d<\dim(G)$, a nonzero vector $v_0\in V_d(\bbQ)$ which is not fixed by $G$, a sequence $\{\gamma_i\}\subset \Gamma=\SL_n(\bbZ)$, and a constant $C_1>0$ such that the following holds: 
		\begin{equation}
		\sup_{s\in B}\norm{g_{t_i}\psi(s)\gamma_iv_0}\leq C_1, \, \forall i. 
		\end{equation}
	\end{prop}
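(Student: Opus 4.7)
The plan is to combine Ratner's measure classification with the Dani--Margulis linearization technique, adapted to the $(C,\alpha)$-good setting of Kleinbock--Margulis and its multi-dimensional extension due to Shah. Since \Cref{prop:invariant} shows that $\mu_\psi$ is invariant under the unipotent subgroup $U_k=\{u(v):v\in\bbR^k\}$, and by hypothesis $\mu_\psi\neq \mu_X$, Ratner's theorem implies that almost every $U_k$-ergodic component of $\mu_\psi$ is a periodic measure supported on a closed orbit $xL\Gamma/\Gamma$ for some proper closed connected subgroup $L\supset U_k$ with $L\cap x^{-1}\Gamma x$ a lattice in $L$. Up to conjugacy, the collection of such $L$ that carry positive $\mu_\psi$-mass is countable, and a standard reduction selects a proper $\bbQ$-algebraic subgroup $\bfH\subsetneq\bfG$ with $H\cap \Gamma$ a lattice in $H$ that captures positive mass.

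Next I would linearize $H$ in the manner of Dani--Margulis: fix a $\bbQ$-basis $v_1,\dots,v_d$ of $\Lie(\bfH)(\bbQ)$ and set $v_0=v_1\wedge\cdots\wedge v_d\in V_d(\bbQ)$, where $d=\dim H<\dim G$. Since $\bfH$ is a proper subgroup of $\bfG$, the vector $v_0$ is not fixed by $G$, and a theorem of Dani--Margulis guarantees that the orbit $\Gamma\cdot v_0$ is discrete in $V_d$. The key linearization lemma (cf.\ \cite[Proposition 4.2]{DM93} and its analytic form in \cite{KM98}) now asserts the following: for any $\eps>0$, the condition that a positive proportion of $\mu_\psi$-mass sits in a small compact neighborhood of $\pi(N(H,U_k))$ translates into the statement that for all large $i$, a subset $E_i\subset B$ with $\lambda(E_i)\geq \eps'$ satisfies $g_{t_i}\psi(s)\gamma_i v_0\in\Phi$ for some $\gamma_i\in\Gamma$ and a fixed compact set $\Phi\subset V_d$.

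The crucial final step is to upgrade this partial boundedness to boundedness on all of $B$. For this I would apply the $(C,\alpha)$-good property of the family of analytic functions $\{s\mapsto \norm{g_t\psi(s)v}:t\in\bbR,\,v\in V_d\}$, established by Kleinbock--Margulis \cite[Proposition 3.4]{KM98} and extended to higher-dimensional analytic submanifolds by Shah \cite[Section 5]{Sha95Duke} (or equivalently following the argument in \cite[Section 3.2]{Sha09Invention} and \cite[Section 2.1]{Sha09Duke1}). The $(C,\alpha)$-good property implies that if $\norm{g_{t_i}\psi(s)\gamma_i v_0}\leq R$ on a set $E_i\subset B$ with $\lambda(E_i)/\lambda(B)\geq \eps'$, then $\sup_{s\in B}\norm{g_{t_i}\psi(s)\gamma_i v_0}\leq C_1 R$ for some $C_1=C_1(\eps',C,\alpha)$, which is exactly the desired conclusion.

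The main obstacle is the handling of the higher-dimensional case $k>1$: the linearization argument of Dani--Margulis is traditionally phrased for one-parameter unipotent flows using polynomial growth, and adapting it to the analytic $(C,\alpha)$-good setting requires the careful reductions carried out by Shah in \cite{Sha95Duke}. A secondary technical point is the extraction of a single subgroup $\bfH$ (and a single representation $V_d$) that works along the entire subsequence $\{t_i\}$, which one arranges by passing to a further subsequence and invoking the countability of the family of candidate subgroups. The overall template to follow is the one developed in \cite[Propositions 6.3--6.4]{KSSY21}, which makes precisely this reduction in a closely analogous situation.
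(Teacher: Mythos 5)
Your proposal matches the paper's approach exactly: the paper does not give a detailed proof but sketches precisely this chain of ideas — Ratner's theorem for the $U_k$-ergodic decomposition, Dani--Margulis linearization producing a proper $\bbQ$-subgroup $\bfH$ and the vector $v_0 \in \bigwedge^{\dim H}\mathfrak{g}(\bbQ)$, the $(C,\alpha)$-good property of analytic functions from Kleinbock--Margulis to upgrade partial to full boundedness, the higher-dimensional reduction from Shah \cite{Sha95Duke}, and the template of \cite[Props.~6.3--6.4]{KSSY21} — citing the same sources and offering essentially no more detail than you do.
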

	
	Since $\psi(s)=\zeta(s)\phi(s)$ and $\zeta(s)$ commutes with $\{g_t\}$, by shrinking $B$ a little, if needed, we can assume that $\{\zeta(s):s\in B\}$ is contained in a compact set, so there exists a constant $C>0$ such that
	\begin{equation}\tag{$\spadesuit$}
	\sup_{s\in\interval}\norm{g_{t_i}\phi(s)\gamma_i v_0}\leq C, \; \forall i.
	\end{equation}

	\subsection{Getting failure of equidistribution}
	In this subsection, we show that if any one of the conditions (\ref{itm:mainthm_case1}),(\ref{itm:mainthm_case2}), or (\ref{itm:mainthm_case3}) in \Cref{thm:main_equidistribution} holds, then equidistribution would fail. 
	
	We need the following consequence of the reduction theory for arithmetic subgroups.
	
	\begin{lem}\label{lem:reduction theory lemma}
		Let $\bfF$ be a connected reductive $\bbQ$-subgroup of a connected reductive $\bbQ$-group $\bfG$. Suppose that $\rank_\bbQ\bfF<\rank_\bbQ\bfG$. Let $\Gamma\subset \bfG(\bbQ)$ be a discrete subgroup of $G$ commensurable with $\bfG(\bbZ)$. Then, for any compact set $C\subset G$, $C F\Gamma$ is a closed proper subset of $G$.
	\end{lem}

    \begin{proof}
    By \cite[7.7]{Bor69}, there exists a finite dimensional representation $V$ of $G$ defined over $\bbQ$ and a vector $v\in V(\bbQ)$ such that the stabilizer of $v$ is $F$. Since $\Gamma$ is commensurable with $\bfG(\bbZ)$, the coordinates of elements of $\Gamma v$ have bounded denominators with respect to a $\bbQ$-basis of $V$. Therefore, $\Gamma v$ is discrete in $V$. Therefore, $\Gamma F$ is closed in $G$, because $\Gamma F$ is the inverse of $\Gamma v$ under the map $g\mapsto gv$. Hence, $F\Gamma$ is closed in $G$.
    
        Let $\bfT$ be a maximal $\bbQ$-split torus of $\bfF$. By our assumption, there exists a maximal $\bbQ$-split torus $\bfS$ in $\bfG$ containing $\bfT$ as a proper subtorus \cite[V.15.4]{Bor91}. So we pick a non-trivial $\bbQ$-character $\beta\in X^\ast(\bfS)$ such that $\bfT\subset \ker\beta$ \cite[III.8.2(c)]{Bor69}. 
        
        Let $\bfP$ be a minimal $\bbQ$-parabolic subgroup of $\bfG$ containing $\bfS$. Let $\Delta\subset X^\ast(\bfS)$ denote the corresponding set of simple roots of $\bfG$ with respect to $\bfS$, for the ordering that is associated to $\bfP$. Let $A=\bfS(\bbR)^0$. Let $A_1=\{a\in A: \alpha(a)\leq 1,\,\forall \alpha\in\Delta\}$. Let $W\subset N_\bfG(\bfS)(\bbQ)$ be a finite set of representatives the Weyl group $N_\bfG(\bfS)(\bbQ)/Z_\bfG(\bfS)(\bbQ)$ of $\bfG$ with respect to $\bfS$. Then $A=\cup_{w\in W} wA_1w^{-1}$.
         
        Now $\beta:A\to \bbR^\ast_{>0}$ is a nontrivial continuous homomorphism. So, $\ker\beta$ is a strictly lower dimensional closed subgroup of $A$. Since $\Int(A_1)$, the interior of $A_1$, is a nonempty open subset of $A$, and we pick
        \[
        a\in \Int(A_1)\setminus \cup_{w\in W} w^{-1} (\ker \beta) w.
        \]
        Then, for all $\alpha\in\Delta$, $\alpha(a)<1$, and for all $w\in W$, $0<\beta(w a w^{-1})\neq 1$. Let $a_n'=a^n$ for all $n\in\bbN$. Then, as $n\to\infty$,        
        \begin{gather} \label{eq:an1}
        \alpha(a_n')\to 0, \,\forall \alpha\in \Delta \text{, and }\\
        \beta(wa_n'w^{-1})\to 0 \text{ or } \beta(wa_n'w^{-1})\to\infty,\, \forall w\in W.
        \label{eq:an2}
        \end{gather}

        Let $\bfP_F$ be a minimal $\bbQ$-parabolic subgroup of $F$ containing $\bfT$. As a consequence of the reduction theory due to Borel and Harish-Chandra~\cite[13.1]{Bor69}, there exists a finite set $\Sigma_F\subset \bfF(\bbQ)$ such that $F=\GS_F\Sigma_F(F\cap\Gamma)$, where $\GS_F$ is a Siegel subset of $F$ with respect to $\bfT$ and a choice of a minimal $\bbQ$-parabolic subgroup of $\bfF$ containing $\bfT$. And by \cite[12.2, 12.3]{Bor69}, $\GS_F\subset C_FT^0$, where $C_F$ is a compact subset of $F$. Thus, we have 
        \[
        F=C_FT^0\Sigma_F(F\cap\Gamma).       
        \]
        
        Now, suppose that $G=CF\Gamma$ for some compact set $C\subset G$. Then, after passing to a subsequence, we write 
        \[
        a'_n=c'_nt_n\sigma'\gamma'_n,
        \]
where $c'_n\subset CC_F$, $t_n\in T^0$ and $\sigma'\in \Sigma_F$, and $\gamma_n'\in F\cap \Gamma$. 

        Since $T^0\subset A=\cup_{w\in W} wA_1w^{-1}$, after passing to a subsequence, there exists $w'\in W$ such that 
        \begin{equation} \label{eq:sn}
        s_n:={w'}^{-1}t_nw'\in A_1.
        \end{equation}
        So we get
        \begin{equation} \label{eq:an'}
        a'_n=(c'_n{w'}^{-1}) s_n (w'\sigma')\gamma'_n.
        \end{equation}

         Let $\bfN$ be the unipotent radical of $\bfP$, defined over $\bbQ$. Then $\bfP=Z_{\bfG}(\bfS)\ltimes\bfN$. Let $\bfM$ be a maximal $\bbQ$-anisotropic subgroup of $Z_\bfG(\bfS)$. Then $Z_\bfG(\bfS)=\bfM \bfS$, and in fact, $Z_\bfG(\bfS)(\bbR)^0=M^0A$ and $P^0=M^0AN$. Let $K$ be a maximal compact subgroup of $G$ such that the corresponding Cartan involution preserves $A$. Then, $G=KP^0=KM^0AN$, and the map $(KM^0)\times A\times N\to G$ given by the group multiplication is a homeomorphism, see~\cite[(2.3)]{BJ07}. Now we express
         \begin{equation} \label{eq:mbv}
         w'(c'_n)^{-1}=k_nm_nb_nv_n,
         \end{equation}
         where $k_n\in K$, $m_n\in M^0$, $b_n\in A$, and $v_n\in N$. Since $W(CC_F)^{-1}$ is compact, the sequence $\{b_n\}$ is a relatively compact subset of $A$.

         Combining, \eqref{eq:an'} and \eqref{eq:mbv}, since $m_n\in Z_G(A)$, we get
         \begin{equation} \label{eq:aux1}
            s_n (w'\sigma')\gamma'_n=(w'{c'_n}^{-1})a'_n=k_nm_nb_nv_na_n'=(k_nb_na_n')[{a_n'}^{-1}m_nv_na_n']. 
         \end{equation}
         
         Since $\bfM$ is $\bbQ$-anisotropic, $M^0N/(M^0N\cap\Gamma)$ is compact. Let $\Omega$ be a compact neighborhood of $e$ in $M^0N$ such that $M^0N=\Omega(M^0N\cap\Gamma)$. Therefore,  ${a_n'}^{-1} m_nv_n a_n'=\omega_n\gamma''_n$
         for sequences $\{\omega_n\}\subset \Omega$ and 
         $\{\gamma''_n\}\subset M^0N\cap\Gamma$. 
         Inserting this expression in \eqref{eq:aux1} we get
         \begin{equation} \label{eq:Siegel}
         s_n\sigma\gamma_n=k_na_n\omega_n,
         \end{equation}
         where $\sigma=w'\sigma'\in \bfG(\bbQ)$, $\gamma_n=\gamma'_n{\gamma''_n}^{-1}\in\Gamma$, and $a_n=b_na_n'$. Since $\{b_n\}$ is relatively compact in $A$, by \eqref{eq:an1}, $a_n\in A_1$ for all large $n$.
         
         Let $\GS_0=KA_1\Omega$. By \eqref{eq:sn} and \eqref{eq:Siegel},
         \[
            \GS_0\sigma\gamma_n\cap\GS_0 \neq\emptyset,\, \forall n\gg1.
        \]
        Now, $\GS_0$ is a normal Siegel set in $G$ with respect to the $\bbQ$-parabolic $\bfP$ and our choice of the maximal compact subgroup $K$, see~\cite[12.3]{Bor69}. 
        Therefore, by Siegel property \cite[15.5]{Bor69}, which is generalization of a theorem of Siegel~\cite[4.6]{Bor69} for $G=\SL_n(\bbR)$, we get $\{\gamma_n\}$ is a finite set. Hence, after passing to a subsequence, we may assume that $\gamma_n=\gamma$ for all $n$. 

         By Bruhat decomposition \cite[11.4]{Bor69},  $G=NWZ_G(A)N$. So we can express
         \[
         \sigma\gamma=vwzu,
         \]
         where $v,u\in N$, $w\in W$, and $z\in AM=Z_G(A)$.
         Therefore, by \eqref{eq:Siegel}
         \[
        k_na_n(\omega_nu^{-1}z^{-1})=(s_nvs_n^{-1})s_nw.
         \]
Now, the sequences $\{a_n(\omega_nu^{-1}z^{-1})a_n^{-1}\}\subset MN$ and $\{s_n v s_n^{-1}\}\subset N$ are relatively compact, as $a_n,s_n\in A_1$. Thus,
\[
c_na_n=w^{-1}s_nw\in A,
\]
where $c_n=w^{-1}(s_n v s_n^{-1})^{-1}k_n(a_n(u_nu^{-1}z^{-1})a_n^{-1})$. Moreover, 
$\{c_n\}$ is a relatively compact subset of $A$. 
Since $\bfT\subset\ker\beta$ and $\{t_n\}\subset T^0$, by \eqref{eq:sn},
\[
1=\beta(t_n)=\beta(w's_n{w'}^{-1})=\beta(w'w(w^{-1}s_n w) w^{-1}{w'}^{-1})=\beta(w_1 c_n w_1^{-1})\beta(w_1a_n w_1^{-1}),
\]
where $w_1=w'w\in W$. This contradicts \eqref{eq:an2}, because $\{\beta(w_1 c_n w_1^{-1})\}$ is a relatively compact subset of $\bbR^\ast_{>0}$. Therefore, $CF\Gamma\neq G$ for any compact set $C$.          
    \end{proof}
	
	Let the notations be the same as in \Cref{thm:main_equidistribution}. We take $\Omega=\phi(\interval)u_A^{-1}$.
	
	\begin{lem}\label{lem:focusing_quasiparabolic}
		Suppose $d<n$ and $A_\phi\in\cW_{n-1}(d,n-d)$, then the measures $g_t\lambda_\phi$ do not get equidistributed in $X$ as $t\to\infty$.
	\end{lem}
	
	\begin{proof}
		Suppose $A_\phi\in\cW_{n-1}(d,n-d)$, then by \Cref{lem:interpretation of Wrmn} where we take $r=d-1$, there exists $C>0$, $t_i\to\infty$ and $0\neq v_i\in\bbZ^n$ such that
		$
		\sup_{s\in \interval}\norm{g_{t_i}\phi(s) v_i}\leq C.
		$
		Without loss of generality, we may assume that all the $v_i$ are primitive; hence $v_i=\gamma_ie_1$ for $\gamma_i\in\Gamma$.
		We discuss the following two cases.
		
		Suppose first there exists $c>0$ such that $\inf_{s\in \interval}\norm{g_{t_i}\phi(s) v_i}\geq c$ for all $i$. Let $\mu$ be a weak-$^\ast$ limit of a subsequence of $\{g_{t_i}\lambda_\phi \}$, and $E$ be the support of $\mu$. We claim that $E$ is not equal to $X$. Indeed, take $M>0$ large enough such that $M^{-{n-1}}<c$ and $M>C$. Then the unimodular lattice $\bbZ M^{-{n-1}}e_1+\bbZ Me_2+\cdots+\bbZ Me_n$ is not in $E$, as every primitive vector in this lattice has length either $M^{-n-1}$ or at least $M$. Hence $\{g_t\lambda_\phi\}_{t\geq0}$ do not get equidistributed in $X$.
		
		Now suppose that after passing to a subsequence $\lim_{i\to \infty}\inf_{s\in \interval}\norm{g_{t_i}\phi(s) v_i}=0$. Let $V_1=\bbR e_1$ and $V_2=\bbR e_2+\cdots +\bbR e_n$ be eigenspaces of $g_t$ with eigenvalues $e^{(n-1)t}$ and $e^{-t}$ respectively. Let $\pi_1$ (resp.~$\pi_2$) be the projection from $\bbR^n$ to $V_1$ (resp.~$V_2$). By our assumption we have $\pi_2(\phi(s_i)v_i)=o(e^{t_i})$ for every $i$ and some $s_i\in \interval$. Since $\phi(s)\in U^+$, we have 
		\[
		\pi_2(\phi(s)v_i)=\pi_2(\phi(s_i)v_i)=o(e^{t_i}), \forall s\in \interval.
		\]
		On the other hand, since
		$
		\sup_{s\in \interval}\norm{g_{t_i}\phi(s) v_i}\leq C,
		$
		we have $
		{\pi_1(\phi(s) v_i)}= O(e^{-(n-1)t_i})
		$ for all $s\in\interval$. Hence there exists a sequence $t_i'\to\infty$ such that $\sup_{s\in \interval}\norm{g_{t_i'}\phi(s) v_i}\to0$ as $i\to\infty$. By Mahler's compactness criterion, we know that $g_{t_i'}\phi(\interval)$ leave any fixed compact set, and $g_{t_i'}\lambda_\phi$ weak-$^\ast$ converge to 0. In particular, in this case, we also have that $\{g_t\lambda_\phi\}_{t\geq0}$ do not get equidistributed in $X$.
	\end{proof}
	
	\begin{lem}\label{lem:focusing_Res GL_r}
		Suppose that there exist integers $r\geq d$, $m\geq 2$ with $rm=n$, and a number field $\bbK\subset\bbR$ with $[\bbK:\bbQ]=m$, such that $\Lphi$ is contained in some $(r-1)$-dimensional affine subspace of $\bbR^{n-1}$ which is defined over $\bbK$. Then $\{g_t\lambda_\phi \}_{t\geq0}$ do not get equidistributed in $X$. 
	\end{lem}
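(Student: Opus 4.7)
The plan is to translate the arithmetic hypothesis into a group-theoretic one via Lemma~\ref{lem:interpretation of Res GL_r}, so that $g_{A_\phi} \in P_d P_r \bfG(\bbK)$, and then invoke Lemma~\ref{lem:K-subspace get stuck}, treating its two possible conclusions separately.

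In case (1) of that lemma one obtains a $\bbQ$-subgroup $\bfF \cong \Res_{\bbK/\bbQ}^{(1)}\GL_r$ of $\bfG$ together with $g_0 \in G$ satisfying $M_r \subset g_0 F g_0^{-1} \subset L_r$. I would immediately apply Corollary~\ref{cor:K-subspace get stuck} to confine $g_t \phi(B)$ to a set of the form $\Sigma F$ for a fixed compact $\Sigma \subset G$ and every $t \geq 0$, so that each $g_t \lambda_\phi$ is supported on $\Sigma F \Gamma/\Gamma \subset X$. The decisive step is a $\bbQ$-rank computation: a maximal $\bbK$-split torus $(\Gm)^r$ of $\GL_r$ gives rise to a $\bbQ$-split torus of rank $r$ in $\Res_{\bbK/\bbQ}\GL_r$ (the scalars in each factor), and the condition of lying in $\SL_n$ imposes one extra relation $(t_1\cdots t_r)^m = 1$, leaving $\bbQ$-rank $r-1$ for $\bfF$. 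Since $m\geq 2$ forces $r \leq n/2 < n$, this is strictly less than $n-1 = \rank_{\bbQ}\bfG$, so Lemma~\ref{lem:reduction theory lemma} yields that $\Sigma F\Gamma$ is a closed proper subset of $G$. Its image in $X$ is then closed and proper with nonempty open complement of positive $\mu_X$-measure, precluding weak-$\ast$ convergence of $g_t\lambda_\phi$ to $\mu_X$.

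In case (2), $\cL_\phi$ lies in a proper $\bbQ$-linear subspace of $\bbP^{n-1}(\bbR)$. I would pick a primitive $v_0 \in \bbZ^n$ annihilating the $d$-dimensional row subspace spanned by the first $d$ rows of $g_{A_\phi}$, so that $g_{A_\phi}v_0 \in \spn(e_{d+1},\ldots,e_n)$. Writing $\phi(s) \in P_1 L_d\cdot g_{A_\phi}$ and using that $L_d$ preserves $\spn(e_{d+1},\ldots,e_n)$ while $P_1$ preserves $\spn(e_2,\ldots,e_n)$, I conclude $\phi(s)v_0 \in \spn(e_2,\ldots,e_n)$ uniformly in $s$. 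Since $g_t$ contracts this subspace at rate $e^{-t}$, we have $\|g_t\phi(s)v_0\| \to 0$ uniformly on $B$, and Mahler's criterion then forces uniform escape of mass, so any weak-$\ast$ limit of $g_t \lambda_\phi$ has total mass strictly less than one, again ruling out equidistribution.

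The main obstacle is the $\bbQ$-rank computation and the bookkeeping for how $g_t\phi(B)$ sits inside $\Sigma F$ in case (1); case (2) is a routine unpacking of the parametrization of $\cL_\phi$ followed by the Mahler criterion.
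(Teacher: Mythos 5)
Your argument is correct and follows essentially the same route as the paper's proof: first translate the hypothesis to $g_{A_\phi}\in P_dP_r\bfG(\bbK)$ via Lemma~\ref{lem:interpretation of Res GL_r}, then treat the two outputs of Lemma~\ref{lem:K-subspace get stuck}, using Corollary~\ref{cor:K-subspace get stuck} and the $\bbQ$-rank comparison with Lemma~\ref{lem:reduction theory lemma} in one case and escape of mass in the other. The paper's write-up is more terse (it simply asserts $\rank_\bbQ\bfF=r-1$ and that translates leave every compact set when $\Lphi$ lies in a rational affine subspace), but your fleshed-out $\bbQ$-rank computation and the explicit choice of a primitive $v_0\in\bbZ^n$ annihilating the rational subspace are exactly the details the paper leaves to the reader.
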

	
	\begin{proof}
		By \Cref{lem:interpretation of Res GL_r}, $u_A\in P_dP_r\bfG(\bbK)$. If $\Lphi$ is contained in some proper affine subspace of $\bbR^{n-1}$ defined over $\bbQ$, then $g_{t}\phi(B)$ leave any fixed compact set as $t\to\infty$, and equidistribution fails in this case. Otherwise, by \Cref{lem:K-subspace get stuck} and \Cref{cor:K-subspace get stuck} there exists a compact subset $\Sigma$ of $G$ such that $g_t\phi(\interval)\subset\Sigma F\Gamma$ for all $t\geq0$, where $F=\bfF(\bbR)$ and $\bfF\cong\Res_{\bbK/\bbQ}^{(1)}\GL_r$. Since $\rank_\bbQ(\bfF)=r-1<n-1=\rank_\bbQ(\bfG)$, by \Cref{lem:reduction theory lemma} we know that $\Sigma F\Gamma$ is a proper closed subset of $X=G/\Gamma$, and thus $\{g_t\lambda_\phi \}_{t\geq0}$ do not get equidistributed in $X$.
	\end{proof}
	
	\begin{lem}\label{lem:focusing_sympletic}
		Suppose that $n\geq 4$ is even, $d=2$ and $\Aext_\phi\in\cW_{\frac{n-2}{2}}(2n-3,N)$. Then $\{g_t\lambda_\phi \}_{t\geq0}$ do not get equidistributed in $X$. 
	\end{lem}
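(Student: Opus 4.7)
By \Cref{lem:interpretation of sympletic}, the hypothesis $\Aext_\phi\in\cW_{\frac{n-2}{2}}(2n-3,N)$ gives a constant $C>0$, a sequence $t_i\to\infty$, and vectors $w_i\in\wedge^2\bbZ^n\setminus\{0\}$ with
\[
\sup_{s\in B}\norm{g_{t_i}\phi(s)w_i}\le C.
\]
The plan is to show that, after passing to subsequences, either $A_\phi\in\cW_{n-1}(d,n-d)$ (reducing to \Cref{lem:focusing_quasiparabolic}) or the orbit $g_{t_i}\phi(B)\Gamma$ is trapped in a set of the form $\Sigma F\Gamma$ with $\rank_\bbQ\bfF<\rank_\bbQ\bfG$, so that \Cref{lem:reduction theory lemma} forces the limiting measure to be supported in a proper closed subset of $X$. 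The dichotomy is driven by the Pfaffian $\mathrm{Pf}\colon\wedge^2\bbR^n\to\bbR$, which is $\bfG$-invariant and homogeneous of degree $n/2$; combined with $\abs{\mathrm{Pf}(v)}\ll\norm{v}^{n/2}$ this yields $\abs{\mathrm{Pf}(w_i)}=\abs{\mathrm{Pf}(g_{t_i}\phi(s)w_i)}\ll C^{n/2}$, and since $\mathrm{Pf}(w_i)\in\bbZ$ we may pass to a subsequence with $\mathrm{Pf}(w_i)=c_1$ constant.

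\emph{Non-degenerate case} ($c_1\neq 0$). Each $w_i$ lies in the single $G$-orbit $Y=\{\mathrm{Pf}=c_1\}$, which is Zariski closed in $\wedge^2\bbR^n$. By Borel--Harish-Chandra (equivalently, the Smith normal form classification of integer non-degenerate skew forms of prescribed Pfaffian), $Y\cap\wedge^2\bbZ^n$ is a finite union of $\Gamma$-orbits, so after a subsequence $w_i=\gamma_i w$ for a fixed $w\in\wedge^2\bbZ^n$ and $\gamma_i\in\Gamma$. Let $\bfF=\Stab_\bfG(w)$; this is a reductive $\bbQ$-form of $\Sp_n$, hence $\rank_\bbQ\bfF\le n/2<n-1=\rank_\bbQ\bfG$. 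Closedness of $Gw=Y$ means the orbit map $G/F\to\wedge^2\bbR^n$ is proper, so the bound $\norm{g_{t_i}\phi(s)\gamma_i w}\le C$ forces $g_{t_i}\phi(s)\gamma_i\in\Sigma F$ for a compact set $\Sigma\subset G$ independent of $s,i$. Therefore
\[
g_{t_i}\phi(B)\Gamma\subset\Sigma F\gamma_i^{-1}\Gamma=\Sigma F\Gamma,
\]
which by \Cref{lem:reduction theory lemma} is a proper closed subset of $G$. Every weak-$*$ limit of $\{g_{t_i}\lambda_\phi\}$ is then supported in a proper closed subset of $X$ and cannot equal $\mu_X$.

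\emph{Degenerate case} ($c_1=0$). Each $w_i$ has even rank $2r_i\le n-2$; refine to a subsequence with $2r=2r_i$ constant. Then $w_i^{\wedge r}\in\wedge^{2r}\bbZ^n\setminus\{0\}$ is decomposable (the Pl\"ucker representative of the rank subspace of $w_i$) and satisfies $\sup_s\norm{g_{t_i}\phi(s)w_i^{\wedge r}}\le c'C^r$. If $\norm{w_i}\to\infty$, \Cref{lem:reducing to standard representation} applied with $k=2r$ yields $v_i\in\bbZ^n\setminus\{0\}$ and $t'_i\ge\frac{1}{2r}(\log\norm{w_i^{\wedge r}}-\log(c'C^r))\to\infty$ with $\sup_s\norm{g_{t'_i}\phi(s)v_i}$ uniformly bounded; by \Cref{lem:interpretation of Wrmn} (with $d=2$, $r=1$) this gives $A_\phi\in\cW_{n-1}(d,n-d)$. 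If instead $\norm{w_i}$ is bounded, pass to a subsequence with $w_i=w$ constant; any non-zero decomposable vector of $\wedge^{2r}\bbQ^n$ with $0<2r<n$ is unstable under $\bfG$ (the standard block-diagonal destabilizing cocharacter contracts it to $0$), so $\bfG\cdot w^{\wedge r}$ is not Zariski closed. Invoking \Cref{prop:notclosed} with $v_0=w^{\wedge r}$ and $\gamma_i=e$ again produces $v_i\in\bbZ^n\setminus\{0\}$ with $t'_i\to\infty$ and bounded orbit, giving $A_\phi\in\cW_{n-1}(d,n-d)$. In either subcase, \Cref{lem:focusing_quasiparabolic} completes the argument.

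The main obstacle is the non-degenerate case: one must replace the a priori varying sequence $w_i$ by a fixed rational vector $w$ so that the orbit closedness and reduction theory can be applied, and this is precisely where the classical arithmetic finiteness of integer symplectic forms of fixed Pfaffian enters as a black box.
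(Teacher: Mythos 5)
Your proposal follows the paper's argument very closely: split on the rank (equivalently the Pfaffian) of the $w_i$, use Borel--Harish-Chandra and \Cref{lem:reduction theory lemma} in the full-rank case, and pass to $w_i^{\wedge r}\in\wedge^{2r}\bbZ^n$ and \Cref{lem:reducing to standard representation} in the degenerate case. The non-degenerate case is essentially identical to the paper's.

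In the degenerate case, however, your dichotomy is on the wrong quantity. You split on whether $\norm{w_i}\to\infty$ or not, but the relevant quantity for \Cref{lem:reducing to standard representation} is $\norm{w_i^{\wedge r}}$: the lemma only guarantees $t_i'\geq \frac{1}{2r}\bigl(\log\norm{w_i^{\wedge r}}-\log(c'C^r)\bigr)$, so $t_i'\to\infty$ requires $\norm{w_i^{\wedge r}}\to\infty$, which does \emph{not} follow from $\norm{w_i}\to\infty$. For instance with $n=6$, $r=2$, take $w_i=e_1\wedge e_2+i\,e_1\wedge e_4+e_3\wedge e_4$; then $\norm{w_i}=i\to\infty$ and $\mathrm{rank}(w_i)=4$, yet $w_i\wedge w_i=2\,e_1\wedge e_2\wedge e_3\wedge e_4$ is constant. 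In that situation your first subcase applies but delivers no $t_i'\to\infty$, and your second subcase (which would correctly route through \Cref{prop:notclosed}) is not triggered because it is conditioned on $\norm{w_i}$ being bounded. The fix is straightforward: dichotomize on $\norm{w_i^{\wedge r}}$. If $\norm{w_i^{\wedge r}}\to\infty$ apply \Cref{lem:reducing to standard representation}; if $\norm{w_i^{\wedge r}}$ is bounded, pass to a subsequence with $w_i^{\wedge r}=\bfw$ constant (it is an integer vector), note that $\bfw$ is a non-zero decomposable vector in $\wedge^{2r}\bbQ^n$ with $0<2r<n$, hence $\bfG\bfw$ is not Zariski closed, and apply \Cref{prop:notclosed} with $v_0=\bfw$, $\gamma_i=e$ exactly as you do in your second subcase. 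With that correction the proof matches the paper's.
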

	\begin{proof}
		Let $V$ be the standard representation of $\SL_{n}$, and $W=\wedge^2 V$. By \Cref{lem:interpretation of sympletic}, there exists $C>0$, $t_i\to\infty$ and $w_i\in W(\bbZ)\setminus\{0\}$ such that
		\begin{equation}\label{eq:failure of equidistribution case three}
		\sup_{\omega\in\Omega}\norm{g_{t_i}\omega u_Aw_i}=\sup_{s\in \interval}\norm{g_{t_i}\phi(s)w_i}\leq C,\; \forall i.
		\end{equation}
		Without loss of generality we may assume that $w_i$ is primitive for every $i$. By the natural isomorphism $\wedge^2V\cong\wedge^2(V^*)^*$, we identify $W$ with the space of alternating bilinear forms on $V^*$. Hence we can talk about the rank of an element in $W$, which is an even number. After passing to a subsequence, we may assume that all the $w_i$ have the same rank $2k$. We discuss the following two cases.
		
		Suppose first that $2k=n$. Equivalently, $w_i$ has full rank for every $i$. Hence the Pfaffian $\pf(w_i)$ is a nonzero integer for every $i$. Since Pfaffian is a $\SL_n$-invariant, by \eqref{eq:failure of equidistribution case three} we know that $\pf(w_i)$ are bounded. Therefore, after passing to a subsequence we may assume that $\pf(w_i)$ is a constant, and thus all the $w_i$ are on the same $\SL_n(\bbR)$-orbit $Y$. Since $Y$ is a $\SL_n$-homogeneous variety defined over $\bbQ$, by a theorem of Borel and Harish-Chandra, there are only finitely many $\SL_n(\bbZ)$-orbits on $Y(\bbZ)$. \footnote{As a referee has pointed out, the number of $\SL_n(\bbZ)$-orbits in $Y(\bbZ)$ is equal to the number of ways of writing the Pfaffian as an $k$-fold product of positive integers.}
        After further passing to a subsequence, we may assume that all the $w_i$ are on the same $\SL_n(\bbZ)$-orbit. We write $w_i=\gamma_iw_0$, where $\gamma_i\in\Gamma$ and $w_0\in W(\bbZ)$ is of full rank. Let $\bfF=\bfG_{w_0}$ be the isotropy group of $w_0$, which is isomorphic to $\Sp_n$ as a $\bbQ$-group. We have an equivariant homeomorphism from $Gw_0$ to $G/F$. Now by \eqref{eq:failure of equidistribution case three}, there exists a compact subset $\Sigma$ of $G$ such that $g_t\phi(\interval)\subset\Sigma F\Gamma$ for all $t\geq0$. Since $\rank_\bbQ(\bfF)=\frac{n}{2}<n-1=\rank_\bbQ(\bfG)$, by \Cref{lem:reduction theory lemma} we know that $\Sigma F\Gamma$ is a proper closed subset of $X=G/\Gamma$, and thus $\{g_t\lambda_\phi \}_{t\geq0}$ do not get equidistributed in $X$. 
		
		Now suppose that $2k<n$. Consider the $\SL_n$-module morphism 
        \[f\colon\Sym^kW\to\wedge^{2k}V\]
		which sends $(v_1\wedge v_2)\cdot(v_3\wedge v_4)\cdots(v_{2k-1}\wedge v_{2k})$ to $v_1\wedge v_2\wedge\cdots\wedge v_{2k}$. Since each $w_i$ is of rank $2k$, we know that ${w}_i:=f(w_i^{k})$ is a pure tensor in $\wedge^{2k}V$. Equation \eqref{eq:failure of equidistribution case three} now implies that there exists $C'>0$ such that for all $i$,
		\[
		\sup_{s\in \interval}\norm{g_{t_i}\phi(s){w}_i}\leq C'.
		\]
		By \Cref{lem:reducing to standard representation} there exist nonzero vectors $v_i\in\bbZ^n$, a constant $C''>0$, and $t'_i\to\infty$ such that
		\[
		\sup_{s\in \interval} \norm{g_{t_i'}\phi(s)v_i} \leq C''.
		\]
		By the proof of \Cref{lem:focusing_quasiparabolic}, we know that $\{g_t\lambda_\phi \}_{t\geq0}$ do not get equidistributed in $X$. 
	\end{proof}
	
	\subsection{Proof of {Theorem~\ref{thm:main_equidistribution}}}
	Finally, we prove our main theorem on equidistribution, namely \Cref{thm:main_equidistribution}.
	
	\begin{proof}[Proof of \Cref{thm:main_equidistribution}]
		Suppose that $\{g_t\lambda_\phi \}_{t\geq0}$ do not get equidistributed in $X$. If $\{g_t\lambda_\phi\}_{t\geq0}$ has escape of mass, then by \Cref{thm:main_nondivergence} we have $n<d$ and $A_\phi\in\cW'_{n-1}(d,n-d)\subset \cW_{n-1}(d,n-d)$; that is, (\ref{itm:mainthm_case1}) of the theorem holds. Now assume that there is no escape of mass, then there exists a sequence $t_i\to\infty$ such that $g_{t_i}\lambda_\phi$ weak-$^\ast$ converge to a probability measure on $X$ which is not the Haar measure. Hence by \Cref{prop:consequence of failure of equidistribution} we know that there exists a representation $V$ of $\bfG$, a sequence $t_i\to\infty$, a sequence $\{\gamma_i\}$ in $\Gamma$, a nonzero vector $v_0\in V(\bbQ)$ which is not fixed by $G$, and a constant $C>0$ such that \eqref{eq:linear_focusing_g_t} holds. Therefore $d<n$, and (\ref{itm:focusing-1}) or (\ref{itm:focusing-2}) or (\ref{itm:focusing-3}) of \Cref{prop:consequence of linear focusing} holds. Applying \Cref{lem:interpretation of Wrmn}, \Cref{lem:interpretation of Res GL_r} and \Cref{lem:interpretation of sympletic}, we know that (\ref{itm:focusing-1}), (\ref{itm:focusing-2}), and (\ref{itm:focusing-3}) of \Cref{prop:consequence of linear focusing} are equivalent to (\ref{itm:mainthm_case1}), (\ref{itm:mainthm_case2}), and (\ref{itm:mainthm_case3}) of \Cref{thm:main_equidistribution}, respectively. Therefore, (\ref{itm:mainthm_case1}) or (\ref{itm:mainthm_case2}) or (\ref{itm:mainthm_case3}) of \Cref{thm:main_equidistribution} holds.
		
		Conversely, suppose that (\ref{itm:mainthm_case1}) or (\ref{itm:mainthm_case2}) or (\ref{itm:mainthm_case3}) of \Cref{thm:main_equidistribution} holds. Then by \Cref{lem:focusing_quasiparabolic}, \Cref{lem:focusing_Res GL_r}, and \Cref{lem:focusing_sympletic}, $\{g_t\lambda_\phi \}_{t\geq0}$ do not get equidistributed in $X$. 
	\end{proof}
	

	\appendix
	\section{A classification theorem}
	In this section, we briefly recall the definition and basic properties of root systems and weights and then prove a classification theorem that is used for describing intermediate subgroups. Some statements are provided without proof, and readers are referred to \cite[Chapter III, VI]{HumphreysLieAlg} for a detailed discussion.
	
	\subsection{Classification of root systems with a certain property}
	Let $E$ be a Euclidean space with inner product $(\cdot, \cdot)$. A \emph{reflection} with respect to a vector $\alpha\in E$ is the isometry of $E$ given by 
	$\sigma_\alpha(\beta)=\beta-\frac{2(\beta, \alpha)}{(\alpha, \alpha)}\alpha$. Write $\nba = \frac{2(\beta, \alpha)}{(\alpha, \alpha)}$. 
	
	\begin{definition} \label{def:root-system}
		A subset $\Phi$ of the Euclidean space $E$ is called a \emph{(reduced) root system} in $E$ if the following axioms are satisfied:
		\begin{enumerate}
			\item $\Phi$ is finite, spans $E$, and does not contain 0.
			\item If $\alpha\in\Phi$, the only multiples of $\alpha$ in $\Phi$ are $\pm\alpha$.
			\item If $\alpha\in\Phi$, the reflection $\sigma_\alpha$ leaves $\Phi$ invariant.
			\item If $\alpha, \beta\in\Phi$, then $\nba \in \bbZ$.
		\end{enumerate}
	\end{definition}
	
	Let $\Phi$ be a root system in $E$. The \emph{Weyl group} of $\Phi$ is the subgroup of $\GL(E)$ generated by the reflections $\{\sigma_\alpha\mid \alpha\in\Phi \}$, and is denoted by $\Weyl$. The lattice generated by $\Phi$ in $E$ is called the \emph{root lattice}, and is denoted by $\Lambda_r$.
	
	A subset $\Delta$ of $\Phi$ is called a set of \emph{simple roots} if it is a basis of $E$ and each root $\beta\in\Phi$ can be written as a nonnegative or nonpositive integral combination of elements in $\Delta$. We call $\beta$ positive or negative respectively.
	
	We call $\Phi$ \emph{irreducible} if it cannot be partitioned into the union of two proper subsets such that each root in one set is orthogonal to each root in the other. It is known that the Weyl group acts irreducibly on an irreducible root system.
	In an irreducible root system, at most two root lengths occur. All roots of the same length are in the same orbit of the Weyl group $\Weyl$.
	In case $\Phi$ is irreducible, with two distinct root lengths, we speak of \emph{long roots} and \emph{short roots}. If all roots are of equal length, it is conventional to call all of them long.
	
	Let $\Phi$ be a root system in a Euclidean space $E$, with Weyl group $\Weyl$. Let $\Lambda$ be the set of all $\lambda\in E$ for which $\nla=\frac{2(\lambda,\alpha)}{(\alpha,\alpha)}\in\bbZ$ for all $\alpha\in\Phi$, and call its elements \emph{weights}. $\Lambda$ is a lattice in $E$, and is called the \emph{weight lattice}. Fix a system of simple roots $\Delta\subset\Phi$, and define $\lambda\in\Lambda$ to be \emph{dominant} if $\nla\geq 0$ for all $\alpha\in\Delta$. Let $\Lambda^+$ denote the set of dominant weights. Suppose $\Delta=\{\alpha_1,\dots,\alpha_r \}$. The \emph{fundamental weights} are $\omega_1,\dots,\omega_n$ such that $\lip\omega_i,\alpha_j\rip = \delta_{ij}$, where $\delta_{ij}$ is the Kronecker symbol.
	
	We call a subset $\Pi$ of $\Lambda$ \emph{saturated} if for all $\lambda\in\Pi$, $\alpha\in\Phi$ and $i$ between 0 and $\nla$, the weight $\lambda-i\alpha$ also lies in $\Pi$. Any saturated set is stable under $\Weyl$.
	
	Given a root system $\Phi$, we have a decomposition
	\[
	\Phi=\Phi_1\amalg\Phi_2\amalg\cdots\amalg\Phi_k,
	\]
	where $\Phi_i$'s are irreducible and orthogonal to each other.
	\begin{thm}\label{thm:classification_root_system}
		Let $\Phi=\Phi_1\amalg\cdots\amalg\Phi_k$ be a root system with Weyl group $\Weyl$, and let $\Lambda=\Lambda_1\oplus\cdots\oplus\Lambda_k$ be the corresponding weight lattice. Let $\Pi=\{\lambda_1, \dots, \lambda_n\}$ be a saturated subset of $\Lambda$ with a highest weight. Suppose there exists a root $\alpha\in\Phi_1$ such that 
		\begin{equation}\label{eq:reflection_numbers_3_cases}
		\nlia = 
		\begin{cases}
		1 & i = 1 \\
		-1 & i = 2 \\
		0 & 3\leq i \leq n 
		\end{cases}.
		\end{equation}
		Then there is a choice of simple roots such that $\lambda_1$ is the highest weight of $\Pi$; $\Pi$ is contained in the span of $\Phi_1$; and one of the following holds:
		\begin{enumerate}
			\item $\Phi_1 = \typeA_{n-1}$, and $\lambda_1\in\{\omega_1,\omega_{n-1}\}$.
			\item $n$ is even, $\Phi_1 = \typeC_{\frac{n}{2}}$, and $\lambda_1=\omega_1$.
		\end{enumerate}
	\end{thm}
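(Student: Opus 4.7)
The plan is to combine the Weyl invariance of $\Pi$ with the rigid multiplicity profile $(1,1,n-2)$ of the $\alpha$-pairing values $(+1,-1,0)$ to first reduce to an irreducible root system, then identify the highest weight, and finally classify the admissible pairs $(\Phi_1,\lambda_1)$ by inspecting the Dynkin diagrams.

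First I would record the immediate consequences of $W$-invariance: since $\sigma_\alpha$ preserves $\Pi$ and negates $\alpha$-pairings, the profile forces $\sigma_\alpha(\lambda_1)=\lambda_2$, so $\lambda_2=\lambda_1-\alpha$, and $\sigma_\alpha$ fixes each $\lambda_i$ with $i\ge 3$. For any $\beta\in\Phi_j$ with $j\ne 1$, $\lip\beta,\alpha\rip=0$, so $\sigma_\beta(\lambda_1)$ still has $\alpha$-pairing $+1$ and must equal $\lambda_1$, giving $\lip\lambda_1,\beta\rip=0$; hence $\lambda_1\in\spn(\Phi_1)$. Using the standard fact that a saturated subset with (unique) highest weight $\mu$ coincides with the weight set of the irreducible representation $V_\mu$, and decomposing $V_\mu=V_{\mu_1}\otimes\cdots\otimes V_{\mu_k}$ with $\mu_j\in\spn(\Phi_j)$, I would argue that $\mu_j=0$ for $j\ne 1$: the contribution from $V_{\mu_j}$ to $\lambda_1\in\spn(\Phi_1)$ must be $0$, and exchanging this with the lowest weight $-\mu_j$ of $V_{\mu_j}$ produces $\lambda_1-\mu_j\in\Pi$ with $\alpha$-pairing $1$ (since $\mu_j\perp\alpha$), forcing $\lambda_1-\mu_j=\lambda_1$ and so $\mu_j=0$. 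Thus $\Pi\subset\spn(\Phi_1)$ and we may replace $\Phi$ with $\Phi_1$ and assume $\Phi$ is irreducible.

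Next, with $\Phi$ irreducible, I would pick a simple system $\Delta$ in which $\lambda_1$ is dominant (after possibly relabeling $\lambda_1\leftrightarrow\lambda_2$ via $\alpha\mapsto-\alpha$, so that $\alpha$ is positive as well). For any $\lambda\in\Pi$ in a $W$-orbit distinct from $W\lambda_1$, the entire orbit has $\alpha$-pairing $0$, so $\lambda$ is orthogonal to the $W$-orbit of $\alpha$; since in an irreducible root system the roots of a given length span the ambient space, this forces $\lambda=0$. Hence $\Pi=W\lambda_1$ or $\Pi=W\lambda_1\cup\{0\}$. In either case the highest weight $\mu$ of $\Pi$ must lie in $W\lambda_1$ (the alternative $\mu=0$ would give $0\ge\lambda_1$ in the dominance order, contradicting dominance and non-vanishing of $\lambda_1$), and dominance of $\lambda_1$ in our chamber then forces $\mu=\lambda_1$.

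With $\lambda_1$ the highest weight, $\Phi$ irreducible, and $\alpha$ positive, writing $\alpha=\sum_\delta c_\delta\delta$ with $c_\delta\ge 0$ and analyzing the $\alpha$-pairings of $\sigma_\delta(\lambda_1)$ for each simple $\delta$, either $\lip\lambda_1,\delta\rip=0$ or $\lip\lambda_1,\delta\rip=\lip\delta,\alpha\rip=1$; combined with the standard inequality $\lip\delta,\alpha\rip\le 0$ for simple $\delta$ not in the support of $\alpha$, this pins down $\lambda_1=\omega_{\delta^*}$ as a single fundamental weight. The final classification is then a case check: $\Pi$ is the weight set of the irreducible representation with highest weight $\omega_{\delta^*}$, and the $\alpha$-pairings must display exactly one non-orthogonal pair. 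Using the explicit descriptions of Weyl orbits of fundamental weights and of long/short roots, types $\typeB,\typeD,\typeE,\typeF,\typeG$ are eliminated (either some weight has $\alpha$-pairing $\pm 2$, or the number of non-orthogonal pairs is always at least $2$), leaving precisely $(\typeA_{n-1},\omega_1)$ and $(\typeA_{n-1},\omega_{n-1})$ with $\alpha=\epsilon_i-\epsilon_j$, and $(\typeC_{n/2},\omega_1)$ with $\alpha=2\epsilon_i$ a long root. The main obstacle will be this final case check, particularly for the minuscule representations of $E_6$ and $E_7$ and the half-spin representations of $D_r$, where the full orbit structure must be carefully examined.
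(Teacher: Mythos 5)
Your overall skeleton matches the paper's (restrict to $\Phi_1$, identify $\lambda_1$ as a minuscule highest weight, then classify), but two of your key lemmas use genuinely different arguments, which is worth noting. For $\Pi\subset\spn\Phi_1$, both arguments begin from $\sigma_\beta(\lambda_1)=\lambda_1$ for $\beta\notin\Phi_1$; the paper then invokes the convex hull of $\Weyl\lambda_1$ (Humphreys 13.4B), while you view $\Pi$ as the weight set of $V_{\mu_1}\otimes\cdots\otimes V_{\mu_k}$ and replace the zero $j$-component of $\lambda_1$ by the lowest weight of $V_{\mu_j}$ (that lowest weight is the image of $\mu_j$ under the longest Weyl element, not necessarily $-\mu_j$, but the argument goes through unchanged). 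Your step showing every weight of $\Pi$ outside $\Weyl\lambda_1$ is orthogonal to $\Weyl\alpha$, hence to all of $\spn\Phi_1$, giving $\Pi\subset\Weyl\lambda_1\cup\{0\}$, is a clean route that is not in the paper; the paper instead proves minuscularity directly.

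Your step 3, however, has a genuine gap. From $\sigma_\delta(\lambda_1)\in\Pi$ you only get $1-\lip\lambda_1,\delta\rip\lip\delta,\alpha\rip\in\{-1,0,1\}$, i.e.\ $\lip\lambda_1,\delta\rip\lip\delta,\alpha\rip\in\{0,1,2\}$, not the dichotomy ``$\lip\lambda_1,\delta\rip=0$ or $\lip\lambda_1,\delta\rip=\lip\delta,\alpha\rip=1$'' you assert: the product could be $2$ with factors $(2,1)$ or $(1,2)$, and even when it is $0$ or $1$, nothing here prevents several simple $\delta$ from having $\lip\lambda_1,\delta\rip=1$. So ``$\lambda_1$ is a single fundamental weight'' does not yet follow. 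The paper closes this with an $\alpha$-string argument (\Cref{lem:minimality_of_lambda_1}): if $\abs{\lip\lambda_1,\beta\rip}\ge 2$ for some root $\beta$, the string $\lambda_1,\lambda_1-\beta,\dots,\lambda_1-\lip\lambda_1,\beta\rip\beta$ lies in $\Pi$, has length at least $3$, and its $\alpha$-pairings form an arithmetic progression; under your profile the only possibility is $\lip\lambda_1,\beta\rip=2$ with $\lambda_1-2\beta=\lambda_2=\lambda_1-\alpha$, forcing $\alpha=2\beta$, impossible in a reduced system. Hence $\lambda_1$ is minuscule, which in particular makes it a single fundamental weight and reduces the case check to the standard minuscule list. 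Alternatively, your own step 2 already forces $\lambda_1$ to be minuscule or quasi-minuscule, so you could drop step 3 entirely and just extend your case check to rule out quasi-minuscule highest weights as well (there the $\alpha$-pairing $+1$ occurs more than once, as one sees already in the adjoint of $A_2$).
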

	
	\begin{remark}
		The dominant weights correspond to the highest weights of irreducible representations of semisimple Lie algebras. The two cases in the theorem correspond to (1) standard and its contragradient representation of $\mathfrak{sl}_n$, and (2) standard representation of $\mathfrak{sp}_n$. 
	\end{remark}
	
	We make some preparations before proving \Cref{thm:classification_root_system}.
	
	\begin{lem}\label{lem:lambda_1_highest_weight}
		Under the assumption of \Cref{thm:classification_root_system}:
		\begin{enumerate}
			\item \label{item:1} there is a choice of simple roots such that $\lambda_1$ is the highest weight of $\Pi$;
			\item \label{item:2} $\Pi$ is contained in the span of $\Phi_1$.
		\end{enumerate}
	\end{lem}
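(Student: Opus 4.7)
My plan is to prove both (1) and (2) with a single projection argument. First I would observe that the $\alpha$-value conditions, combined with $W$-invariance of $\Pi$ and the reflection formula $\sigma_\alpha(\lambda)=\lambda-\langle\lambda,\alpha\rangle\alpha$, force $\sigma_\alpha$ to swap $\lambda_1$ and $\lambda_2$ and fix each $\lambda_i$ for $i\geq 3$; in particular $\alpha=\lambda_1-\lambda_2$, and $\lambda_1,\lambda_2$ are the \emph{only} weights in $\Pi$ with nonzero $\alpha$-value.

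The key tool is the following projection lemma. Fix a choice of simple roots $\Delta=\bigcup_j\Delta_j$ (with $\Delta_j\subset\Phi_j$) and let $\pi_j\colon E\to\spn(\Phi_j)$ denote orthogonal projection. If $\nu\in\Pi$ is the highest weight for this $\Delta$ and satisfies $\pi_j(\nu)=0$, then $\pi_j(\lambda)=0$ for every $\lambda\in\Pi$. Indeed, $\nu-\lambda$ is a nonnegative integral combination of simple roots, so projecting yields $-\pi_j(\lambda)\in C_j:=\sum_{\beta\in\Delta_j}\bbR_{\geq 0}\beta$. Since $\Pi$ is $W$-invariant and $\pi_j$ commutes with $W_j$, the same argument applied to $w\lambda$ gives $w\pi_j(\lambda)\in -C_j$ for every $w\in W_j$, and pairing with $\rho_j^\vee:=\sum_i\omega_i^\vee$ shows $\bigcap_{w\in W_j}wC_j=\{0\}$, forcing $\pi_j(\lambda)=0$.

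Assertion (1) then follows by contradiction: if $\lambda_1\notin W\mu$, no element of $W\mu$ has $\alpha$-value $\pm 1$ (by the uniqueness observation), so $W\mu\perp\alpha$. Since $W_j$ ($j\geq 2$) acts trivially on $\spn(\Phi_1)$ and $W_1$ acts irreducibly on $\spn(\Phi_1)$, this forces $\pi_1(\mu)=0$. The projection lemma then gives $\pi_1(\lambda)=0$ for all $\lambda\in\Pi$, contradicting $\langle\lambda_1,\alpha\rangle=1$. Hence $\lambda_1\in W\mu$, and any Weyl chamber whose closure contains $\lambda_1$ makes it the highest weight, proving (1). For (2): given $\beta\in\Phi_j$ with $j\geq 2$, $\beta\perp\alpha$ implies $\sigma_\beta(\lambda_1)\in\Pi$ has $\alpha$-value $1$, hence equals $\lambda_1$; therefore $\lambda_1\in\spn(\Phi_1)$. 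With $\mu=\lambda_1$ we have $\pi_j(\mu)=0$ for every $j\geq 2$, and the projection lemma applied to each such $j$ delivers $\Pi\subset\spn(\Phi_1)$.

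The main obstacle I anticipate is the identity $\bigcap_{w\in W_j}wC_j=\{0\}$; the cleanest route is pairing with $\rho_j^\vee$, exploiting that a dominant element of $-C_j$ must be zero (since $\rho_j^\vee$ is a positive $\bbR$-linear combination of simple coroots, so it pairs non-negatively with any dominant weight while pairing non-positively with elements of $-C_j$). Everything else is bookkeeping with the partial order, the reflection formula, and the product decomposition $W=W_1\times\cdots\times W_k$ acting on the orthogonal direct sum $E=\bigoplus_j\spn(\Phi_j)$.
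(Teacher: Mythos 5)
Your argument is correct, but it goes a genuinely different route from the paper's. For part (1) the paper avoids the contradiction: it chooses simple roots so that $\alpha$ itself is dominant, and then if $\lambda$ were the highest weight with $\lambda\neq\lambda_1$, writing $\lambda=\lambda_1+\sum n_i\beta_i$ (with $n_i\geq 0$ and $\beta_i$ simple) and pairing with $\alpha$ gives $\langle\lambda,\alpha\rangle\geq\langle\lambda_1,\alpha\rangle=1$, contradicting $\langle\lambda,\alpha\rangle\in\{0,-1\}$. For (2) the paper quotes the standard fact that a saturated set lies in the convex hull of the Weyl orbit of its highest weight, together with the observation that $\sigma_\beta$ fixes $\lambda_1$ for $\beta\in\Phi\setminus\Phi_1$, so $\Weyl\lambda_1=\Weyl_1\lambda_1\subset\spn(\Phi_1)$. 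Your projection lemma is a self-contained substitute for that convex-hull citation (and indeed follows from it, since $\pi_j(\nu)=0$ implies $\pi_j(\Weyl\nu)=\Weyl_j\,\pi_j(\nu)=\{0\}$, so the hull and hence $\Pi$ lies in $\ker\pi_j$), and it has the elegance of serving both parts, at the cost of a more circuitous proof of (1). One small imprecision to fix: in the last step of the projection lemma you argue that a $\Delta_j$-dominant element of $-C_j$ vanishes by noting that $\rho_j^\vee$ is a positive combination of simple coroots, claiming this makes it pair nonnegatively with dominant weights and nonpositively with $-C_j$. The first claim is right, but the second does not follow from that description alone (the Gram matrix of the simple roots has negative off-diagonal entries, so positivity of $(\alpha_i,\rho_j^\vee)$ is not manifest from that expression). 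What you actually need is $\langle\alpha_i,\rho_j^\vee\rangle=1>0$, which is the defining property of $\rho_j^\vee$ as the sum of fundamental coweights. Even simpler: if $v$ is dominant and $v=-\sum n_i\alpha_i$ with $n_i\geq 0$, then $(v,v)=-\sum n_i(v,\alpha_i)\leq 0$, forcing $v=0$, with no auxiliary vector needed.
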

	
	\begin{proof}
		We may choose simple roots $\Delta=\Delta_1\amalg\cdots\amalg\Delta_r$ such that $\alpha$ is a dominant weight, i.e. $(\beta,\alpha)\geq 0$ for all $\beta\in\Delta$. Under this choice, we have that $\lambda_1$ is the highest weight of $\Pi$. Indeed, by assumption $\Pi$ has a highest weight $\lambda$. Suppose $\lambda\neq\lambda_1$, by \Cref{eq:reflection_numbers_3_cases} we have $\lip \lambda, \alpha \rip < \lip \lambda_1, \alpha \rip$.	On the other hand, since $\lambda_1\prec\lambda$ we have $\lambda = \lambda_1 + \sum n_i\beta_i$, where $\beta_i\in\Delta$ and $n_i\geq 0$; it follows that $\lip \lambda, \alpha \rip \geq \lip \lambda_1, \alpha \rip$.	Contradiction. Hence (\ref{item:1}) is verified.
		
		Let $E_1=\Phi_1\otimes_\bbZ\bbR$. To prove (\ref{item:2}), we use the fact that a saturated set is in the convex hull of the Weyl group orbit of the highest weight, see \cite[Section 13.4 Lemma B]{HumphreysLieAlg}. 
		For any $\beta\in\Phi\setminus\Phi_1$, we have $(\beta, \alpha)=0$, and it follows that 
		$\lip\sigma_\beta(\lambda_1),\alpha\rip=\lip\lambda_1,\alpha\rip=1$.
		Hence $\sigma_\beta(\lambda_1)=\lambda_1$ for all $\beta\in\Phi\setminus\Phi_1$. Note that $\Weyl$ is generated by the simple reflections in $\Delta$ and that simple reflections in $\Delta_i$ and $\Delta_j$ commute if $i\neq j$. Hence we have $\Weyl\cdot \lambda_1=\Weyl_1\cdot\lambda_1$, where $\Weyl_1$ is the Weyl group of $\Phi_1$. Finally, $\Weyl_1\cdot\lambda_1$ is contained in $E_1$, and $\Pi$ is contained in the convex hull of $\Weyl_1\cdot\lambda_1$, thus also in $E_1$.
	\end{proof}
	
	In the next lemma, we will need the following notion (see \cite[P72, Section 13 Exercise 13]{HumphreysLieAlg}): we call $\lambda\in\Lambda^+$ \emph{minimal} (or \emph{minuscule}) if $\mu\in\Lambda^+, \mu\prec\lambda$ implies that $\mu=\lambda$. Each coset of $\Lambda_r$ in $\Lambda$ contains precisely one minimal $\lambda$. One can show that $\lambda$ is minimal if and only if the $\Weyl$-orbit of $\lambda$ is saturated with highest weight $\lambda$, if and only if $\lambda\in\Lambda^+$ and $\lip \lambda, \beta \rip = 0,1,-1$ for all roots $\beta$.
	
	\begin{lem}\label{lem:minimality_of_lambda_1}
		Under the assumption of \Cref{thm:classification_root_system}, one can choose a system of simple roots such that $\lambda_1$ is minimal in $\Lambda_1^+$.
	\end{lem}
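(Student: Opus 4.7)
The plan is to retain the simple roots chosen in \Cref{lem:lambda_1_highest_weight}, so that $\lambda_1$ is the dominant highest weight of $\Pi$ and $\Pi$ is contained in $E_1 := \Phi_1 \otimes_\bbZ \bbR$. Since a dominant weight of $\Phi_1$ is minimal precisely when its pairing with every root in $\Phi_1$ lies in $\{-1, 0, 1\}$ (the characterization recalled just above the lemma), combined with dominance the task reduces to showing $\langle \lambda_1, \beta \rangle \leq 1$ for every positive root $\beta \in \Phi_1$.

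I argue by contradiction: suppose $m := \langle \lambda_1, \beta \rangle \geq 2$ for some positive root $\beta \in \Phi_1$. Saturation places the entire $\beta$-string $\lambda_1 - i\beta$, $0 \leq i \leq m$, in $\Pi$; $\Weyl$-stability of $\Pi$ places $\sigma_\alpha(\lambda_1) = \lambda_1 - \alpha$ in $\Pi$, and its pairing with $\alpha$ equals $-1$, so hypothesis \eqref{eq:reflection_numbers_3_cases} forces the identification $\lambda_2 = \lambda_1 - \alpha$. Pairing each term of the $\beta$-string with $\alpha$ produces the arithmetic progression $1 - ic$ with $c := \langle \beta, \alpha \rangle$, every term of which must belong to $\{-1, 0, 1\}$ and attain the values $\pm 1$ at most at $\lambda_1$ and $\lambda_2$ respectively.

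A short case analysis in $c$ now closes the argument. Any $c \leq -1$ pushes $\langle \lambda_1 - \beta, \alpha \rangle \geq 2$, and any $c \geq 3$ pushes it to $\leq -2$; $c = 0$ violates uniqueness of the value $1$ (the whole chain pairs to $1$ against $\alpha$); $c = 2$ forces $\lambda_1 - \beta = \lambda_2 = \lambda_1 - \alpha$, hence $\beta = \alpha$, which contradicts $m \geq 2$ since $\langle \lambda_1, \alpha \rangle = 1$. In the remaining case $c = 1$, either $m \geq 3$ puts the forbidden value $-2$ into the progression, or $m = 2$ gives $\lambda_1 - 2\beta = \lambda_2 = \lambda_1 - \alpha$, i.e., $\alpha = 2\beta$, contradicting axiom (2) of a reduced root system. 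The only subtle point is the identification $\lambda_2 = \lambda_1 - \alpha$ obtained from $\Weyl$-invariance of $\Pi$ together with uniqueness in \eqref{eq:reflection_numbers_3_cases}; once this is in hand, the case analysis is direct arithmetic.
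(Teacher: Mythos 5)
Your proposal is correct and follows essentially the same approach as the paper: assume $\langle\lambda_1,\beta\rangle\geq 2$ for some root $\beta$, look at the $\beta$-string through $\lambda_1$, pair against $\alpha$ to obtain an arithmetic progression constrained by \eqref{eq:reflection_numbers_3_cases}, and deduce $\alpha=2\beta$, contradicting that $\Phi$ is reduced. The only difference is presentational: you carry out the case analysis on $c=\langle\beta,\alpha\rangle$ fully and explicitly, whereas the paper compresses it into the single sentence ``the only possibility is $\langle\lambda_1,\beta\rangle=2$ and $\lambda_1-2\beta=\lambda_2$.'' Your handling of $c=2$ (via $\beta=\alpha$ contradicting $m\geq2$) is a perfectly valid alternative to the more direct observation that the third term of the progression would be $-3$; both close that case. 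The identification $\lambda_2=\sigma_\alpha(\lambda_1)=\lambda_1-\alpha$ via $\Weyl$-invariance and uniqueness in \eqref{eq:reflection_numbers_3_cases} is exactly the step the paper uses as well.
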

	
	\begin{proof}
		By \Cref{lem:lambda_1_highest_weight}, we may choose a set of simple roots such that $\lambda_1\in\Lambda_1^+$. By the discussion above, it suffices to show that
		\[
		\lip \lambda_1, \beta \rip = 0, \pm 1,\quad \forall\beta\in\Phi.
		\]
		
		Suppose that $\abs{\lip \lambda_1, \beta \rip}\geq 2$, consider the string 
		\[\{ \lambda_1, \dots, \lambda_1-\lip \lambda_1, \beta \rip \beta \}.\]
		This string has length at least 3, and pairing it with $\alpha$, we get a finite arithmetic progression of length at least 3. By \Cref{eq:reflection_numbers_3_cases}, the only possibility is that $\lip\lambda_1,\beta\rip=2$ and $\lambda_1-2\beta = \lambda_2$. Since $\Pi$ is saturated, there exists $i$ such that $\lambda_i=\sigma_\alpha(\lambda_1)$. Again by \Cref{eq:reflection_numbers_3_cases} we have $i=2$ and $\lambda_2=\sigma_\alpha(\lambda_1)=\lambda_1-\alpha$. But this implies $\alpha=2\beta$, contradicting that the root system $\Phi$ is reduced.
	\end{proof}
	
	Now we are in a position to prove \Cref{thm:classification_root_system}
	\begin{proof}[Proof of \Cref{thm:classification_root_system}]
		The first assertion is proved in \Cref{lem:lambda_1_highest_weight}. By \Cref{lem:minimality_of_lambda_1}, we have that $\lambda_1$ is minimal. 
		
		There is a complete (finite) list of minimal weights in each irreducible root system (c.f. \cite[P72, Section 13 Exercise 13]{HumphreysLieAlg}). We consider each case separately to verify whether \Cref{eq:reflection_numbers_3_cases} holds. This case-by-case verification requires some efforts involving long and elementary calculations. Finally, one can show that \Cref{eq:reflection_numbers_3_cases} only holds for $(\Phi,\lambda_1)$ being $(\typeA_{n-1},\omega_1)$, $(\typeA_{n-1},\omega_{n-1})$ or $\big(\typeC_{\frac{n}{2}},\omega_1\big)$.
	\end{proof}
	
	\subsection{Classification of intermediate Lie subalgebras}\label{subsect:rep_semisimple_Lie_alg}
	Let $\fg$ be a semi-simple Lie algebra over an algebraically closed field of characteristic 0, and let $\fh$ be a Cartan subalgebra of $\fg$. Let $V$ be a finite-dimensional $\fg$-module. Then the $\fh$-action on $V$ is diagonalizable, and we have the decomposition $V=\bigoplus V_\lambda$, where $\lambda$ runs over $\fh^*$ and $V_\lambda=\{ v\in V\mid h.v=\lambda(h)v, \;\forall h\in H \}$. If $V_\lambda$ is nonzero, we call $\lambda$ a \emph{weight} of $V$, and $V_\lambda$ a \emph{weight space}. The set of weights of $V$ is saturated with a highest weight. The highest weight yields a one-to-one correspondence between dominant integral weights and irreducible representations of $\fg$.
	
	Let $\ff_{12}$
	be the Lie subalgebra of $\mathfrak{sl}_n$ consisting of traceless matrices in the upper-left $2\times 2$ block.
	
	\begin{thm}\label{thm:classification_Lie_algebra}
		Let $\varrho\colon \fg \to \End(V)$ be a finite-dimensional faithful irreducible representation of a semisimple Lie algebra $\fg$. Suppose that under an identification of $\End(V)$ with $\mathfrak{gl}_n$, $\varrho(\fg)$ contains $\ff_{12}$. Then $\fg$ is simple, and moreover one of the following holds:
		\begin{enumerate}
			\item $\fg=\mathfrak{sl}_n$, and $V$ is the standard representation or the contragradient representation of the standard representation of $\fg$.
			\item $n$ is even, $\fg=\mathfrak{sp}_n$, and $V$ is the standard representation of $\fg$.
		\end{enumerate}
	\end{thm}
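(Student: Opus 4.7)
The plan is to apply \Cref{thm:classification_root_system} with $V$ as the representation and with $\alpha$ being the root of $\fg$ corresponding to the nilpotent element $e_{12}\in\ff_{12}\subset\varrho(\fg)$. Identifying $\fg$ with $\varrho(\fg)\subset\mathfrak{gl}_n$ via faithfulness of $\varrho$, the element $h:=\diag(1,-1,0,\ldots,0)\in\ff_{12}$ is a semisimple element of $\fg$, and so lies in some Cartan subalgebra $\fh$ of $\fg$. The nilpotent $e_{12}\in\fg$ satisfies $[h,e_{12}]=2e_{12}$, so $e_{12}$ spans a root space of $\fg$ with respect to $\fh$ corresponding to some root $\alpha$ with $\alpha(h)=2$; equivalently, $h$ is the coroot $h_\alpha$. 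Hence for every $\fh$-weight $\mu$ of $V$ one has $\langle \mu,\alpha\rangle=\mu(h)$.

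Let $\Pi=\{\mu_1,\ldots,\mu_k\}$ be the set of \emph{distinct} $\fh$-weights of $V$. From the prescribed action of $\ff_{12}$, the eigenvalues of $h$ on $V$ are $+1,-1,0,\ldots,0$. Thus exactly one $\mu_1\in\Pi$ has $\mu_1(h)=+1$ with multiplicity one, exactly one $\mu_2\in\Pi$ has $\mu_2(h)=-1$ with multiplicity one, and the remaining $\mu_3,\ldots,\mu_k$ all satisfy $\mu_i(h)=0$. Since $V$ is irreducible, $\Pi$ is saturated and admits a highest weight. The hypotheses of \Cref{thm:classification_root_system} now hold (with ``$n$'' there being $k$), so there exists a simple component $\Phi_1$ of the root system of $\fg$ such that $\Pi$ lies in the span of $\Phi_1$ and $(\Phi_1,\mu_1)$ is one of $(\typeA_{k-1},\omega_1)$, $(\typeA_{k-1},\omega_{k-1})$, or $(\typeC_{k/2},\omega_1)$.

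Write $\fg=\fg_1\oplus\cdots\oplus\fg_r$ as a direct sum of simple ideals and correspondingly $V=V_1\otimes\cdots\otimes V_r$ with each $V_j$ irreducible over $\fg_j$. Every weight of $V$ is a sum of weights of the $V_j$'s in the pairwise orthogonal summands; since $\Pi$ lies in the span of $\Phi_1$, each $V_j$ with $j\geq 2$ has only the zero weight and is therefore trivial. By faithfulness of $\varrho$ this forces $r=1$, i.e.\ $\fg$ is simple. Finally, in each of the three cases output by \Cref{thm:classification_root_system} all weight multiplicities of the associated representation are equal to $1$, so $k=\dim V=n$, and $V$ is exactly the standard representation of $\mathfrak{sl}_n$, its contragredient, or (for $n$ even) the standard representation of $\mathfrak{sp}_n$, as claimed. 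The main obstacle in carrying out this plan is the first step: one must verify that the pair $(h,e_{12})$ really assembles into coroot-root data of the abstract Lie algebra $\fg$, rather than just of the ambient $\mathfrak{gl}_n$; once this and the resulting numerical condition $\nlia\in\{1,-1,0\}$ are in place, \Cref{thm:classification_root_system} and a standard weight count for tensor-product decompositions close the argument cleanly.
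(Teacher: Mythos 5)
Your argument takes essentially the same route as the paper's: identify $h=\diag(1,-1,0,\ldots,0)$ as the coroot $h_\alpha$ of a root $\alpha$ of $\fg$, read the eigenvalues of $h$ on $V$ as the pairings $\nlia$, feed the resulting saturated weight set into \Cref{thm:classification_root_system}, and deduce simplicity from the tensor-product decomposition (the paper instead works with the multiset of weights and passes through a chain of Cartan subalgebras $\fh_{12}\subset\fh_\ff\subset\fh$, but these are presentational differences). The concern you flag is genuine but addressable in one line: the $2$-eigenspace of $\ad h$ in $\End(V)$ is one-dimensional, hence $\fh$-stable, so $e_{12}$ is automatically a root vector for $\fh$, and the $\mathfrak{sl}_2$-triple $(h,e_{12},e_{21})$ lying inside $\fg$ then forces $h=h_\alpha$.
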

	
	\begin{proof}
		Since $\fg$ is semisimple, $\varrho(\fg)$ is contained in $\mathfrak{sl}_n$. Let $\ff=\varrho(\fg)$. Take a chain of Cartan subalgebras $\fh_{12}\subset \fh_\ff\subset\fh$ in the chain $\ff_{12}\subset\ff\subset\mathfrak{sl}_n$. By taking a suitable basis $\{e_1, e_2, \dots, e_n\}$ of $V$ we may assume $\fh$ consists of diagonal matrices, and then $\fh_{12}$ consists of elements of the form $\diag(a,-a,0,\dots, 0)$. Now let $\alpha$ be the character $\diag(a_1,a_2,\dots, a_n)\mapsto a_1-a_2$. Since $\ff$ contains $E_{12}$ and $\fh_\ff$ acts on $E_{12}$ via $\alpha$, we have that $\alpha$ is a root of $\ff$ with respect to $\fh_\ff$. Let $\Pi_\ff=\{\lambda_1, \lambda_2, \dots, \lambda_n\}$ be the weights of $V$ with respect to $\fh_\ff$ counted with multiplicity, such that $e_1, e_2, \dots, e_n$ are their weight vectors respectively. Since $\fh_\ff^*$ is a Euclidean subspace of $\fh^*$, we have
		\[
		\nlia = 
		\begin{cases}
		1 & i = 1 \\
		-1 & i = 2 \\
		0 & 3\leq i \leq n 
		\end{cases}.
		\]
		Hence we can apply \Cref{thm:classification_root_system} to conclude that $\varrho$ factors through a simple factor $\fg_1$ of $\fg$. But $(\varrho, V)$ is faithful, hence $\fg_1=\fg$ and $\fg$ is simple. We apply \Cref{thm:classification_root_system} again, and the conclusion of the theorem follows.
		Indeed, note that in each highest weight module associate to each dominant weight appearing in (1)(2) of \Cref{thm:classification_root_system}, all weight spaces are one-dimensional. Hence $\{\lambda_1,\dots,\lambda_n\}$ are distinct, and the cardinality of $\Pi_\ff$ is $n$.
	\end{proof}
	
	\subsection{Classification of intermediate subgroups}
	Using the above classification theorem on intermediate Lie subalgebras, we are able to obtain the following classification of intermediate subgroups, which we are interested in.
	
	\begin{thm}\label{thm:classification_intermediate_subgroups}
		Let $\bfG$ be a reductive group over an algebraically closed field $\bbK$ of characteristic 0. Let $\rho\colon \bfG\to\GL(V)$ be a faithful irreducible representation of $\bfG$, such that $\rho(\bfG)$ is contained in $\SL(V)$. Suppose that there are linear subspaces $W_1$ and $W_2$ of $V$ with $V=W_1\bigoplus W_2$ and $\dim W_1\geq 2$, such that $\SL(W_1)\times 1_{W_2}$ is contained in $\rho(G)$. Then one of the following holds:
		\begin{enumerate}
			\item $\rho(\bfG)=\SL(V)$.
			\item $n$ is even and $\dim W_1=2$; there exists a sympletic form $\omega$ on $V$ such that $\rho(\bfG)=\Sp(V, \omega)$.
		\end{enumerate}
	\end{thm}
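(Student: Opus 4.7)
The plan is to reduce the statement to the classification of intermediate Lie subalgebras in \Cref{thm:classification_Lie_algebra}. Replacing $\bfG$ by its identity component, I may assume $\bfG$ is connected, since the identity component still contains the connected subgroup $\SL(W_1)\times 1_{W_2}$ and inherits the faithful irreducible representation. Let $\mathfrak{g}\subset \mathfrak{sl}(V)$ denote the image of the derivative $\rho_\ast\colon \Lie(\bfG)\to \mathfrak{gl}(V)$; faithfulness of $\rho$ in characteristic zero forces $\rho_\ast$ to be injective, so $\mathfrak{g}$ acts faithfully and irreducibly on $V$.

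To apply \Cref{thm:classification_Lie_algebra} I first need $\mathfrak{g}$ to be semisimple. The center of the connected reductive group $\rho(\bfG)$ commutes with $\rho(\bfG)$ and so, by Schur's lemma, acts on the irreducible module $V$ by scalars; lying in $\SL(V)$, these scalars form a finite group of $n$-th roots of unity. Hence $\rho(\bfG)$ has finite center and is semisimple, and $\mathfrak{g}$ is a semisimple Lie algebra. Choosing a basis $\{e_1,\ldots,e_n\}$ of $V$ so that $e_1,e_2\in W_1$, the first $\dim W_1$ vectors span $W_1$, and the remaining ones span $W_2$, the Lie algebra of $\SL(W_1)\times 1_{W_2}$ becomes the traceless $(\dim W_1)\times(\dim W_1)$ block in the upper-left corner of $\mathfrak{sl}_n$, which contains $\mathfrak{f}_{12}$. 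Thus \Cref{thm:classification_Lie_algebra} yields either $\mathfrak{g}=\mathfrak{sl}(V)$, or $n$ is even and $\mathfrak{g}=\mathfrak{sp}(V,\omega)$ for some symplectic form $\omega$. By connectedness, $\rho(\bfG)$ equals $\SL(V)$ or $\Sp(V,\omega)$ respectively.

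The final piece is to show $\dim W_1 = 2$ in the symplectic case; this is the only step that uses the hypothesis on $W_1$ beyond the Lie-algebra input. Suppose for contradiction $\dim W_1 \geq 3$. Since $\SL(W_1)$ acts transitively on $W_1\setminus\{0\}$, for each $w\in W_2$ the linear form $v\mapsto \omega(v,w)$ on $W_1$ is $\SL(W_1)$-invariant and hence zero, so $\omega(W_1,W_2) = 0$. The restriction $\omega|_{W_1\times W_1}$ is an $\SL(W_1)$-invariant element of $\bigwedge^2 W_1^\ast$, a representation that has no nonzero invariants when $\dim W_1\geq 3$, so $\omega|_{W_1}=0$ as well. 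Combining, $W_1$ lies in the radical of $\omega$, contradicting non-degeneracy; hence $\dim W_1 = 2$. The main substantive obstacle is \Cref{thm:classification_Lie_algebra} itself, which reduces via the weight-system argument to a case-by-case verification against the minuscule weights; once that input is available, the translation back to algebraic groups is short.
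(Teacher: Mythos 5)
Your proof is correct and follows the same overall strategy as the paper's: reduce to the identity component, pass to the Lie algebra, and invoke \Cref{thm:classification_Lie_algebra} via the observation that $\mathfrak{sl}(W_1)\supset\mathfrak{f}_{12}$. There are two points where you diverge from, and actually improve on, the paper's argument. First, where the paper passes to the derived subgroup $\bfG^{\mathrm{der}}$ and only at the end concludes $\bfG=\bfG^{\mathrm{der}}$, you get semisimplicity of $\rho(\bfG)$ directly from Schur's lemma: the connected center acts by scalars that lie in $\SL(V)$, hence is finite. Both reductions work (the paper implicitly also needs Schur to see that $\rho|_{\bfG^{\mathrm{der}}}$ stays irreducible), but yours is more self-contained. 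Second, the paper disposes of the constraint $\dim W_1=2$ with the single sentence ``it suffices to prove the theorem for $\dim W_1=2$,'' which reduces the hypothesis but does not actually establish the $\dim W_1=2$ part of conclusion (2) when the original $W_1$ is larger. Your direct argument --- $\omega(W_1,W_2)=0$ because each functional $\omega(\cdot,w)$ is $\SL(W_1)$-fixed, and $\omega|_{W_1\times W_1}=0$ because $\bigwedge^2 W_1^\ast$ has no $\SL(W_1)$-invariant when $\dim W_1\ge 3$, so $W_1$ would lie in the radical of $\omega$ --- closes this gap cleanly and is the kind of one-paragraph verification the paper apparently leaves to the reader.
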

	
	\begin{proof}
		We first make a few reductions. Let $\bfG^0$ be the identity component of $\bfG$. Suppose the theorem holds for $\bfG^0$, then one easily sees that $\bfG=\bfG^0$. Therefore, without loss of generality, we may assume that $\bfG$ is connected. Also, it suffices to prove the theorem for $\dim W_1 = 2$. We first prove the theorem for $\bfG$ semisimple.
		
		Let $n=\dim V$. We take the differential of $\rho$ and get a Lie algebra representation $d\rho\colon \fg\to\End(V)$. By \Cref{thm:classification_Lie_algebra}, either $\fg=\mathfrak{sl}_n$ or $\fg=\mathfrak{sp}_n$ ($n$ even). 
		
		If $\fg=\mathfrak{sl}_n$ and $(d\rho, V)$ is the standard or the contragradient representation of $\fg$, then $d\rho$ lifts to the standard or the contragradient representation of the simply-connected group $\SL_n$, which is faithful. Hence $\bfG=\SL_n$ and $\rho(\bfG)=\SL(V)$. 
		
		If $\fg=\mathfrak{sp}_n$ and $(d\rho, V)$ is the standard representation of $\fg$,  then $d\rho$ lifts to the standard representation of the simply-connected group $\Sp_n$, which is faithful. Hence $\bfG=\Sp_n$ and $\rho(\bfG)=\Sp(V,\omega)$ for some sympletic form $\omega$ on $V$.
		
		For general $\bfG$, consider $\bfG^{\mathrm{der}}=[\bfG,\bfG]$, and the conditions of the theorem still hold for $\bfG^{\mathrm{der}}$. Then $\bfG^{\mathrm{der}}$ satisfies either (1) or (2), and $\bfG=\bfG^{\mathrm{der}}$ as $\rho$ is faithful.
	\end{proof}
	
	

\end{document}